\DeclareMathOperator\ad{ad}
\DeclareMathOperator\Ad{Ad}
\DeclareMathOperator\R{\mathbb R}
\DeclareMathOperator\lie{Lie}
\DeclareMathOperator\ran{Ran}
\DeclareMathOperator\V{\mathsf{V}}
\DeclareMathOperator\der{{\texttt{Der}\,}(\V)}
\DeclareMathOperator\str{{\texttt{str}}(\V)}
\DeclareMathOperator\Str{{\texttt{Str}}(\V)}
\DeclareMathOperator\gl{{\mathsf{GL}}}
\DeclareMathOperator\glv{{\mathsf{GL}}(\V)}
\DeclareMathOperator\Aut{{\texttt{Aut}}(\V)}
\DeclareMathOperator\aut{{\texttt{aut}}(\V)}
\DeclareMathOperator\GO{{\mathsf{G}}(\Omega)}
\DeclareMathOperator\bv{{\mathsf{B}}(\V)}
\DeclareMathOperator\B{\mathsf{B}}
\DeclareMathOperator\h{\mathsf{H}}
\DeclareMathOperator\bh{\mathsf{B}(\h)}
\DeclareMathOperator\U{\mathrm{U}}
\begin{document}

\newtheorem*{theorem*}{Theorem}
\newtheorem{teo}{Theorem}[section]
\theoremstyle{definition}
\newtheorem{prop}[teo]{Proposition}
\newtheorem{lema}[teo]{Lemma}
\newtheorem{coro}[teo]{Corollary}
\newtheorem{defi}[teo]{Definition}
\newtheorem{rem}[teo]{Remark}
\newtheorem{ejem}[teo]{Example}
\newtheorem{problem}[teo]{Problem}

\markboth{}{}

\makeatletter

\title{\vspace*{-2cm}On the structure group of an infinite dimensional JB-algebra}
\date{}
\author{Gabriel Larotonda}
\address{Departamento de Matemat\'ica, FCEYN-UBA, and Instituto Argentino de Matem\'atica, CONICET, Buenos Aires, Argentina}
\email{glaroton@dm.uba.ar}
\author{Jos\'e Luna}
\address{Instituto Argentino de Matem\'atica ``Alberto P. Calder\'on'', CONICET, Buenos Aires, Argentina}
\email{jluna@dm.uba.ar}

\keywords{automorphism group; Banach-Lie group; JB-algebra;  Jordan algebra; quadratic representation; structure group}
\subjclass[2020]{Primary 22E65; 17C10; Secondary 58B25}

\makeatother

\begin{abstract}{We extend several results for the structure group of a real Jordan algebra $\V$, to the setting of infinite dimensional JB-algebras. We prove that the structure group $\Str$, the cone preserving group $\GO$ and the automorphism group $\Aut$ of the algebra $\V$ are embedded Banach-Lie groups of $\glv$, and that each of the inclusions $\Aut\subset \GO\subset \Str$ are of embedded Banach-Lie subgroups. We give a full description of the components of $\Str$ via cones, isotopes and central projections. We apply these results to $\V=\bh_{sa}$ the special JB-algebra of self-adjoint operators on an infinite dimensional complex Hilbert space, describing the groups $\Str, \GO, \Aut$, their Banach-Lie algebras and their connected  components. We show that the action of the unitary group of $\h$ on $\Aut$ has smooth local cross sections, thus $\Aut$ is a smooth principal bundle over the unitary group, with structure group $S^1$.}
\end{abstract}

\maketitle

\setlength{\parindent}{0cm} 

\tableofcontents

\section{Introduction}

The theory of real Jordan algebras $\V$ was introduced as a means to deal systematically with the observables in quantum mechanics by Jordan, Wigner and von Neumann \cite{koecher}, but from the very begininning there were difficulties in the setting of infinite dimensional algebras, and several well-known results for finite dimensional algebras are still lacking in the infinite dimensional setting. Most recently, the celebrated theorem of Koecher and Vinberg was extended to the setting of Banach Jordan algebras (JB-algebras for short) by Chu (see \cite{chu2} and the references therein), completing the characterization of the positive cone  $\Omega$ of $\V$ obtained by Kaup and Upmeier in \cite{kaup}.

\smallskip

The purpose of this paper is to extend well-known results of the structure group $\Str$ of a real Jordan algebra $\V$, to the setting of infinite dimensional JB-algebras: we prove that the structure group, the cone preserving group $\GO$ and the automorphism group $\Aut$ of the algebra $\V$ are embedded Banach-Lie groups of $\glv$, and that each of the inclusions $\Aut\subset \GO\subset \Str$ are of embedded Banach-Lie subgroups. We give a full description of the components of $\Str$ via cones and central projections, a result which generalizes naturally the presentation of $\Str$ for Euclidean (semi-simple, finite dimensional) Jordan algebras. In particular, this describes the isomorphic isotopes of the algebra $\V$. We apply these results to $\V=\bh_{sa}$ the special JB-algebra of self-adjoint operators on an infinite dimensional complex Hilbert space, describing the groups $\Str, \GO, \Aut$, their Lie algebras and their connected  components. With these results at hand, the Banach-Finsler geometry of these groups is studied in an accompanying paper \cite{larluna2}.

\smallskip

This paper is organized as follows: in Section \ref{s2} we go through the necessary definitions and properties of JB-algebras, presenting some infinite dimensional examples along the way, and finishing with a theorem characterizing the elements $x\in \V$ such that the spectrum of $U_x$ is positive. Section \ref{s3} contains the bulk of new results of the paper. We begin by reviewing the definition of the structure group $\Str$ of a JB-algebra $\V$, and rephrasing it in a way that allows us to present $\Str$ as an algebraic subgroup of $\glv$. Thus $\Str$ is an embedded Banach-Lie subgroup with the norm topology of $\bv$. Then we move on to the group $\GO$ preserving the cone, and again we show that it is a Banach-Lie subgroup of $\Str$, being its identity component. Further we show that the group $\Aut$ of automorphisms of the cone is an embedded Banach-Lie subgroup of $\GO$. Since the former is a strong deformation retract of the later, they have the same homotopy and in particular $\Aut$ and $\GO$ have the same number of connected components. We finish this section with a theorem that characterizes the components of $\Str$, which are copies of $\GO$, and each copy is uniquely determined by a central projection $p^2=p\in \V$. As an illustration of these results, in Section \ref{s4} we give a complete description of $\Str,\GO,\Aut$ for the special JB-algebra $\V$ of self-adjoint operators acting on a complex infinite dimensional Hilbert space $\h$. Further, we show that the smooth action $u\mapsto \Ad_u k$ of the unitary group $\mathcal U(\h)$ on $\Aut$ has smooth local cross sections, inducing a principal $S^1$-fiber bundle $\pi:\Aut\to \mathcal U(\h)$.

\section{Jordan algebras, cones and the spectrum}\label{s2}

In this section we survey the main objects and tools of theory of Jordan Banach algebras, we present some relevant examples, and we finish the section with a characterization of the positive cone $\Omega$ of a Jordan Banach algebra $\V$, in terms of the quadratic representation (this is Theorem \ref{upositive}, which is a well-known result for finite dimensional algebras). Throughout, if $V$ denotes a real Banach space, we will denote with $V^*$ the topological dual of $\V$,  with $\B(V)$ the algebra of bounded linear operators in $V$, and with $\gl(V)$ the group of invertible operators in $V$. 

\subsection{Cones in Banach Spaces}

\begin{defi}[Cones] 	A nonempty set $\Omega\subset\V$ is a \textit{convex cone} if it satisfies that $\Omega + \Omega \subset \Omega$ and $\lambda \Omega \subset \Omega$ for all positive $\lambda$. The cone is \textit{proper} if $\Omega \cap -\Omega = \{0\}$. The cone is \textit{reproducing} if $\V=\Omega-\Omega$. Every cone is a convex set, and every proper cone induces a partial order: $x \leq y$ if $y-x \in \Omega$. If $\Omega$ is a cone, then its closure $\overline{\Omega}$ is also a cone. If $\Omega$ is an open cone then  $\overline{\Omega}^{\circ} = \Omega$ (see \cite[Lemma 2.2]{chu} for the proof). Then it is plain that $\overline{\Omega} = \{v \in \V: v \geq 0\}$, so we can recover the cone from the partial order.
\end{defi}

\begin{defi}[Order units]\label{ordernorm}
	Let $\V$ be a real Banach space, and $\Omega\subset \V$ a cone. An element $e$ is called an order unit if for every $x$ in $\V$ there exists a positive $\lambda\in\R$ such that $-\lambda e \leq x \leq \lambda e$. An order unit $e$ is called \textit{archimedean} if all $x$ in $\V$ satisfy that if $\lambda x \leq e$ for every positive $\lambda$, then $x \leq 0$. 	An archimedean order unit $e$ induces a norm $||\cdot||$ on $\V$, by means of $\;\|x\|_e = \inf\{\lambda > 0 : -\lambda e \leq x \leq \lambda e\}$. 
	
The space $(\V,e)$ is a \textit{complete archimedean order unit space} if the order unit norm $||\cdot||$ is complete.  If $\V$ has a order unit then $\V$ it is reproducing: take $x$ in $\V$, then there exists positive $\lambda$ such that $-\lambda e \leq x \leq \lambda e$. From this it follows that both $x_1 =\frac{\lambda e +x}{2}$ and $x_2 = \frac{\lambda e - x}{2}$ are positive, and $x = x_1-x_2$.
\end{defi}

\begin{defi}[Positive maps]
A linear map $T:(\V,e) \to (W,u)$ is \textit{positive} if it maps the cone on $\V$ into the cone on $W$. It is called a \textit{positive linear functional} if $(W,u)$ is the real numbers with archimedean order unit 1. The set of positive functionals will be denoted by $\Omega^*$.
\end{defi}

\begin{defi}[Normal cone]
	A cone $\Omega$ is \textit{normal} if there exists $\delta >0$ such that for every $u$, $v$ in $\Omega$ with $||u|| = ||v|| =1$, $||u + v|| \geq \delta$. Every closed normal cone is proper.
\end{defi}

\begin{lema}\label{normal}
		Let $\V$ be a real Banach space with a cone $\Omega$. The following are equivalent:
\begin{enumerate}
\item $\Omega$ is a normal cone 
\item $||x|| \leq M ||e||\, ||x||_e$ for some constant $M$ independent of $x,e$
\item the norm is semi-monotone: $\exists K>0$ s.t. $0 \le x\le y$ implies $\|x\|\le K\|y\|$
\item $\Omega^*$ is a reproducing cone.
\end{enumerate}
	\end{lema}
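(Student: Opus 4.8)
The plan is to take condition (3), semi-monotonicity of the norm, as the hub and to show separately that each of (1), (2) and (4) is equivalent to it. Throughout I may assume that $\V$ carries at least one order unit (as it does in the JB-algebra setting that interests us), so that condition (2) is not vacuous, and I take $\overline{\Omega}$ closed so that $-\|x\|_e e\le x\le \|x\|_e e$ holds by the archimedean property. The equivalence $(3)\Leftrightarrow(1)$ is the elementary core. One direction is immediate: if $u,v\in\Omega$ have $\|u\|=\|v\|=1$ then $0\le u\le u+v$, so (3) gives $1=\|u\|\le K\|u+v\|$ and hence $\|u+v\|\ge 1/K=:\delta$. For $(1)\Rightarrow(3)$ I would argue by contradiction using a scaling normalization: if no $K$ works, pick $a_n,b_n\in\Omega$ with $0\le a_n\le a_n+b_n$, $\|a_n\|=1$ and $\|a_n+b_n\|<1/n$. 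Putting $s_n=a_n+b_n\to 0$ and $b_n=s_n-a_n$ forces $\|b_n\|\to 1$; with $u_n=a_n$ and $v_n=b_n/\|b_n\|$ a one-line estimate gives $\|u_n+v_n\|\le \|s_n\|+|1-\|b_n\||\to 0$, contradicting (1).

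For $(3)\Leftrightarrow(2)$ the computations are routine. Given $x$ and an order unit $e$, the inequalities $-\|x\|_e e\le x\le \|x\|_e e$ yield $0\le x+\|x\|_e e\le 2\|x\|_e e$, and applying (3) together with the triangle inequality produces $\|x\|\le (2K+1)\|x\|_e\|e\|$, so $M=2K+1$ works uniformly in $x$ and $e$. Conversely, to deduce (3) from (2) I would perturb by a fixed order unit $e_0$: for $0\le x\le y$ the element $y_\varepsilon:=y+\varepsilon e_0$ is again an order unit and $-y_\varepsilon\le x\le y_\varepsilon$, so $\|x\|_{y_\varepsilon}\le 1$ and (2) gives $\|x\|\le M\|y+\varepsilon e_0\|$; letting $\varepsilon\to 0$ yields $\|x\|\le M\|y\|$.

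The equivalence $(3)\Leftrightarrow(4)$ is the delicate point and I expect it to be the main obstacle; it is essentially the Krein--And\^o duality between normality of $\Omega$ and $\Omega^*$ being reproducing. The direction $(4)\Rightarrow(3)$ I would obtain from a Baire category argument: the sets $S_n=\{g-h:g,h\in\Omega^*,\ \|g\|,\|h\|\le n\}$ are convex, symmetric and weak-$*$ compact (Banach--Alaoglu, since $\Omega^*$ is weak-$*$ closed), hence norm closed, and they cover $V^*$ precisely because $\Omega^*$ is reproducing; by Baire some $S_N$ contains a norm ball $\rho B_{V^*}$, giving a uniform decomposition $f=g-h$ with $\|g\|,\|h\|\le C\|f\|$. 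Evaluating on $0\le x\le y$ with $\|f\|\le 1$ then gives $|f(x)|=|g(x)-h(x)|\le g(x)+h(x)\le g(y)+h(y)\le 2C\|y\|$, whence $\|x\|\le 2C\|y\|$.

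The hard half is $(3)\Rightarrow(4)$: from an arbitrary $f\in V^*$ one must manufacture positive functionals $g,h\in\Omega^*$ with $f=g-h$. The naive ``positive part'' $p(x)=\sup\{f(u):0\le u\le x\}$ is finite by (3) but fails to be additive, since subadditivity would require a Riesz decomposition property that $\V$ need not possess; this is exactly why the lattice-free proof is subtle. I would instead construct a dominating positive functional by a Hahn--Banach extension, using normality to ensure the governing sublinear gauge is everywhere finite and norm-continuous, or else invoke the classical And\^o duality theorem for closed cones in a Banach space. Keeping track of the constants across the four steps then shows $\delta$, $K$, $M$ and $C$ are mutually controlled, closing the circle of equivalences.
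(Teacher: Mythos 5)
Your proposal should be measured against the fact that the paper does not actually prove this lemma: its ``proof'' consists of two citations (Krasnosel'skii, Theorems 1.1--1.2, for the equivalence of (1), (2), (3), and Schaefer, Ch.~V, \S 3, for the equivalence with (4)). So your hub-through-(3) scheme is a genuinely different and more self-contained route, and three of your four spokes are correct as written. The normalization/contradiction argument for $(1)\Leftrightarrow(3)$ works exactly as you say ($\|a_n\|=1$ and $\|a_n+b_n\|<1/n$ force $\|b_n\|\to 1$, hence $\|a_n+b_n/\|b_n\|\,\|\to 0$). Both halves of $(2)\Leftrightarrow(3)$ are fine, and the perturbation $y_\varepsilon=y+\varepsilon e_0$ is a clean way to make $x$ order-bounded by an order unit; the only caveat, which you flag yourself, is that $(2)\Rightarrow(3)$ needs an order unit to exist at all (otherwise (2) is vacuous), a hypothesis that holds in all of the paper's applications since Lemma \ref{ref:hayUnidOrd} produces order units whenever the cone is open. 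Your Baire-category proof of $(4)\Rightarrow(3)$ is also correct and complete: the sets $S_n$ are weak-$*$ compact, hence norm closed, they cover $\V^*$, and the resulting uniform decomposition $f=g-h$ with $\|g\|,\|h\|\le C\|f\|$ gives semi-monotonicity by evaluating on $0\le x\le y$.

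The genuine gap is $(3)\Rightarrow(4)$, which you acknowledge but do not close. The Hahn--Banach route you sketch stalls on exactly the obstructions you identify: the superadditive candidate $q(x)=\sup\{f(u):0\le u\le x\}$ is not subadditive without Riesz decomposition, and the natural sublinear majorant $p(x)=K\|f\|\inf\{\|y\|:y\in\overline{\Omega},\ y\ge x\}$ is $+\infty$ outside $\overline{\Omega}-\overline{\Omega}$, so a dominated extension need not be bounded when $\Omega$ is not reproducing. The standard repair is a separation/bipolar argument rather than an extension: semi-monotonicity makes the full hull $[U]=(U+\overline{\Omega})\cap(U-\overline{\Omega})$ of the unit ball $U$ bounded (if $z=u_1+p_1=u_2-p_2$ then $0\le p_1\le p_1+p_2=u_2-u_1$ gives $\|p_1\|\le 2K$, so $[U]\subset(1+2K)U$); computing one-sided polars gives $(U+\overline{\Omega})^{\circ}=-\Omega^*\cap B_{\V^*}$ and $(U-\overline{\Omega})^{\circ}=\Omega^*\cap B_{\V^*}$, and the bipolar theorem then places $\frac{1}{1+2K}B_{\V^*}$ inside the weak-$*$ closed convex hull of $(\Omega^*\cap B_{\V^*})\cup(-\Omega^*\cap B_{\V^*})$, which by the weak-$*$ compactness you already established is contained in $\{g-h:g,h\in\Omega^*,\ \|g\|,\|h\|\le 1\}$; this is the Grosberg--Krein/And\^o theorem with explicit constant. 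If instead you simply ``invoke the classical duality theorem,'' you are doing for this one direction precisely what the paper does for all four directions; with that single citation (or the polar argument above) supplied, your proof is complete and strictly more self-contained than the paper's.
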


\begin{proof}
See \cite[Theorems 1.1 and 1.2]{kras} for the equivalence or normality with the second and third assertions, and \cite[Chapter 5, Section 3]{schaefer} for the equivalence with the fourth.
\end{proof}

\begin{defi}[Symmetric cone]
	Let $\V$ be a real Banach space. We say that a proper open cone $\Omega\subset\V$ is  \textit{symmetric} if it is
	\begin{enumerate}
		\item (self dual) $\Omega = \{x \in \V: \varphi(x) > 0 \text{ for every }\varphi \in \Omega^*\}$ and
		\item (homogeneous) for every $x$, $y$ in $\Omega$ there exists an isomorphism $g: \V \to \V$ such that $g(x) = y$.
	\end{enumerate}
\end{defi}

\begin{rem}Note that a symmetric cone is also symmetric in the following sense: if $\Omega$ is a symmetric cone in a Banach space $\V$,   then
	\begin{equation*}
		\overline{\Omega} = \{x \in \V: \varphi(x) \geq 0 \text{ for every }\varphi \in \Omega^*\}.
	\end{equation*}
This is because if we take $x$ in $\V$ such that $\varphi(x) \geq 0$ for every $\varphi \in \Omega^*$, let $e\in\Omega$. Then for every $n$ we have $\varphi(x+ \nicefrac{1}{n}\,e) =\varphi(x) + \nicefrac{1}{n}\,\varphi(e) >0$, hence $x\in \overline{\Omega}$.
\end{rem}

Order units are key in ordered spaces, as they provide a norm. It is possible to characterize cones with order units, and compare the order unit norm  with the original norm. We state the precise result below, see \cite[Lemma 2.5]{chu} for a proof:

\begin{lema}\label{ref:hayUnidOrd}
	Let $\V$ be a real vector space with norm $||\cdot||$ and let $\Omega\subset \V$ be an open proper cone. Give $\V$ the order induced by $\overline{\Omega}$. Then every element $e$ in $\Omega$ is an order unit, and if $e$ is archimedean then $||\cdot||_e \leq c||\cdot||$ for some positive $c$.
\end{lema}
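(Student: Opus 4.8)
The plan is to exploit the openness of $\Omega$ to produce, for each $x$, an explicit multiple of $e$ that dominates $x$ in both directions. Since $e\in\Omega$ and $\Omega$ is open, there is a radius $r>0$ with the open ball $B(e,r)\subset\Omega$. Recall that the order is the one induced by $\overline{\Omega}$, so $a\le b$ means precisely $b-a\in\overline{\Omega}$, and in particular every element of $\Omega\subset\overline{\Omega}$ is $\ge 0$.

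First I would check that $e$ is an order unit. Fix $x\in\V$; the case $x=0$ is trivial, so assume $x\neq 0$ and set $\lambda=\tfrac{2}{r}\|x\|$. Then the two vectors $e\pm\tfrac{r}{2\|x\|}\,x$ differ from $e$ by a vector of norm $r/2<r$, so both lie in $B(e,r)\subset\Omega\subset\overline{\Omega}$. Hence $e-\tfrac{r}{2\|x\|}\,x\ge 0$, which rearranges to $x\le\lambda e$, and $e+\tfrac{r}{2\|x\|}\,x\ge 0$, which rearranges to $-\lambda e\le x$. Thus $-\lambda e\le x\le\lambda e$, proving that $e$ is an order unit.

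For the norm comparison I would simply read off the estimate from the same computation. By definition $\|x\|_e=\inf\{\lambda>0:-\lambda e\le x\le\lambda e\}$, and the previous paragraph already exhibits the admissible value $\lambda=\tfrac{2}{r}\|x\|$; therefore $\|x\|_e\le\tfrac{2}{r}\|x\|$, and taking $c=2/r$ finishes the proof. The archimedean hypothesis plays no role in the inequality itself: it is needed only to guarantee that $\|\cdot\|_e$ is genuinely a norm (i.e.\ positive definite) rather than merely a seminorm, as recorded in Definition \ref{ordernorm}.

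Honestly there is no serious obstacle here; the entire content sits in the openness of the cone. The only points that demand a little care are the bookkeeping of the order relation (translating membership in $\overline{\Omega}$ into the inequalities $x\le\lambda e$ and $-\lambda e\le x$) and keeping the two hypotheses separate: openness delivers both the order-unit property and the norm bound, while the archimedean assumption is what legitimizes $\|\cdot\|_e$ as a norm in the first place.
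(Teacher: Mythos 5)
Your proof is correct: openness of $\Omega$ at $e$ gives the ball $B(e,r)\subset\Omega$, and scaling the membership $e\pm\tfrac{r}{2\|x\|}x\in\overline{\Omega}$ by the positive factor $\tfrac{2\|x\|}{r}$ yields both the order-unit property and the bound $\|x\|_e\le \tfrac{2}{r}\|x\|$. The paper does not prove this lemma itself but cites \cite[Lemma 2.5]{chu}, and your argument is precisely the standard one behind that reference, so there is nothing to fault; your closing remark that the archimedean hypothesis only serves to make $\|\cdot\|_e$ a genuine norm is also accurate.
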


This tells us that in a symmetric cone we have order units. We will now discuss a couple of examples, which shows the interplay of these properties and their limitations:

\begin{ejem}[A self-dual reproducing cone with empty interior, which is not contained in any proper subsapce]\label{ejem} Let $\V=\ell^2(\mathbb N)$, let $C=\{x\in \V: x_n> 0\}$. Then it is plain that $C$ is a self-dual cone, in particular convex. We claim that $C^o=\emptyset$, that $C$ is reproducing and that $C$ is not contained in a proper subspace. For the first claim, let $x\in C$, let $r>0$ and take $n$ such that $x_n<r/2$ and let $y$ be obtained from $x$ by replacing the $n$-th entry of $x$ by $y_n=x_n-r/2<0$. Then $y\in B_r(x)$ but $y\notin C$, thus $C$ has empty interior. Now let $z\in \V$, decompose $z=z^+-z^-$ where $z^+$ consists of the positive entries of $z$ (zero elsewhere) and $z^-$ the negative entries of $z$. Now, $z^+$ and $z^-$ do not belong in $C$, as some of their entries can be null. But consider $\tilde{z}^+ = z^+ + (\frac{1}{n})_{n \in \mathbb{N}}$ and $\tilde{z}^- = z^- + (\frac{1}{n})_{n \in \mathbb{N}}$; both these elements belong in $C$ and it is plain that $z = \tilde{z}^+ - \tilde{z}^-$ thus $\V=C-C$. Finally, assumme that $C$ is contained in an affine subspace $C\subset x_0+W$; then, $C-x_0 \subset W$. But $\V = C - C = (C-x_0) - (C-x_0) \subset W - W$, which shows that $W=\V$.
\end{ejem}

\begin{ejem}[A self-dual open cone that is not normal]\label{c1ex} Let $\V=C^1[0,1]$ with the usual norm $||f||_{C^1} = ||f||_{\infty} + ||f'||_{\infty}$, let $C = \{f \in V: f(x)> 0\text{ for all }x\}$. It is plain that $C$ is open, as $||\cdot||_{\infty} \le ||\cdot||_{C^1}$. But $C$ is not normal: it is obvious that $0 \le x^n \le x$ for all $n$, but $||x^n|| = n+1$ and $||x|| = 2$. On the other hand, $C$ is self-dual: if $L \in (C^1[0,1])^*$, then $L(f) = \int_0^1 f d\mu_1 + \int_0^1 f' d\mu_2$ for $\mu_1$, $\mu_2$ signed borel measures on $[0,1]$. We claim that $C^* = \{L : L(f) = \int_0^1 f d\mu \text{ for positive measure }\mu\}$. One inclusion is obvious. Now, let $L \in C^*$. $L(f) = \int_0^1 f d\mu_1 + \int_0^1 f' d\mu_2$ for $\mu_1$, $\mu_2$ signed borel measures. Let $P_1$, $N_1$, $P_2$ and $N_2$ be the positive and negative sets of $\mu_1$ and $\mu_2$ respectively. Suppose $N_1$ is not empty. Take $f$ positive differentiable function such that is null in $P_1$, increasing in $N_1 \cap N_2$ and decreasing in $N_1 \cap P_2$. Then $L(f) \le 0$, which is absurd. Then $N_1$ is empty and $\mu_1$ is positive. Analogously, $P_2$ and $N_2$ are empty and $\mu_2$ is the null measure. Now, $C$ is self-dual as if $\int_0^1fd\mu > 0$ for every positive measure $\mu$, then $f$ is a positive function. Lets compute the order norm of $C$. We have
$$
||f||_e = \inf \{\lambda>0 : -\lambda < f(x) < \lambda \text{ for all }x\} = \sup{|f(x)|} = ||f||_{\infty}.
$$
With this norm, $C^1[0,1]$ is not a Banach space, but its closure $C[0,1]$ is.
\end{ejem}

In many cones, the original norm is actually equivalent to the order unit norm:

\begin{lema}\label{NormalEqOrden}
	Let $V$ be a real Banach space with a symmetric cone $\Omega\subset V$. Give $V$ the partial order induced by $\overline{\Omega}$, and let $e$ be an element in $\Omega$. Then $e$ is an archimedean order unit, and the unit norm $||\cdot||_e$ is equivalent to the original norm $||\cdot||$ if and only if the cone $\Omega$ is normal.
\end{lema}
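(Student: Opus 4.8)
The plan is to prove the archimedean claim directly, and then split the norm-equivalence statement into its two implications, feeding each into a different clause of Lemma \ref{normal}. Since $\Omega$ is symmetric it is by definition an open proper cone, so Lemma \ref{ref:hayUnidOrd} applies and tells us at once that every $e\in\Omega$ is an order unit, and that $\|\cdot\|_e\le c\|\cdot\|$ for some $c>0$ \emph{once we know $e$ is archimedean}. Thus one half of the desired norm equivalence is automatic, and the whole problem reduces to controlling $\|\cdot\|$ from above by a multiple of $\|\cdot\|_e$.

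To see that $e$ is archimedean, I would take $x$ with $\lambda x\le e$ for every $\lambda>0$, i.e.\ $e-\lambda x\in\overline{\Omega}$. Dividing by $\lambda$ and using that $\overline{\Omega}$ is a cone gives $\tfrac1\lambda\,e-x\in\overline{\Omega}$; letting $\lambda\to+\infty$ and using that $\overline{\Omega}$ is closed yields $-x\in\overline{\Omega}$, that is $x\le 0$. This is the only point where closedness of $\overline{\Omega}$ is used.

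For the ``if'' direction, assuming $\Omega$ normal I would invoke the implication (1)$\Rightarrow$(2) of Lemma \ref{normal}, which gives $\|x\|\le M\|e\|\,\|x\|_e$ for our fixed $e$; combined with $\|\cdot\|_e\le c\|\cdot\|$ this is the equivalence of the two norms. For the converse I would first record the elementary monotonicity of the order unit norm: if $0\le x\le y$, then for every $\lambda>\|y\|_e$ one has $-\lambda e\le 0\le x\le y\le \lambda e$, so $\|x\|_e\le\|y\|_e$. Assuming now $\|\cdot\|\sim\|\cdot\|_e$, say $c_1\|\cdot\|\le\|\cdot\|_e\le c_2\|\cdot\|$, this monotonicity upgrades to $\|x\|\le (c_2/c_1)\|y\|$ whenever $0\le x\le y$, i.e.\ the norm is semi-monotone; then the implication (3)$\Rightarrow$(1) of Lemma \ref{normal} gives that $\Omega$ is normal.

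The main subtlety I anticipate is precisely in the converse. Clause (2) of Lemma \ref{normal} carries a constant $M$ that is uniform over \emph{all} order units $e$, so equivalence of norms for a single $e$ does not by itself reproduce (2). Routing the converse instead through the semi-monotone characterization (3)---which only needs the monotonicity of $\|\cdot\|_e$ available for one fixed $e$---sidesteps this mismatch and is the crux of the argument.
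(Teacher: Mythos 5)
Your proposal is correct, and its overall skeleton (Lemma \ref{ref:hayUnidOrd} for one inequality, Lemma \ref{normal} for the other) matches the paper's, but both key steps are carried out differently. For the archimedean property, the paper argues via duality: from $e-\lambda x\in\overline{\Omega}$ it deduces $\varphi(e)\ge\lambda\varphi(x)$ for every positive functional $\varphi$, concludes $\varphi(x)\le 0$ for all such $\varphi$, and then uses the self-duality of the symmetric cone to get $x\le 0$; your argument instead divides by $\lambda$ and lets $\lambda\to\infty$, using only that $\overline{\Omega}$ is a closed cone. Yours is more elementary and more general (it works for the order induced by any closed cone, with no symmetry hypothesis), while the paper's stays within the functional machinery it uses throughout. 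For the equivalence-implies-normal direction, the paper simply writes that ``from Lemma \ref{normal} we see that we have a reversed inequality if and only if $\Omega$ is normal,'' which, read as an appeal to clause (2), suffers from exactly the uniformity mismatch you identify: clause (2) quantifies over all order units $e$, whereas norm equivalence is available only for the fixed $e$. Your detour through the monotonicity of $\|\cdot\|_e$ and clause (3) (semi-monotonicity) closes this gap cleanly, so on this point your write-up is actually more careful than the paper's; an equally direct alternative would be to verify normality by hand, since for $u,v\in\Omega$ with $\|u\|=\|v\|=1$ one has $\|u+v\|\ge c_1\|u+v\|_e\ge c_1\|u\|_e\ge (c_1/c_2)\|u\|$.
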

\begin{proof}
Take $x$ in $V$ such that $\lambda x \leq e$ for every positive $\lambda$. Then, as $e-\lambda x$ is positive, we have that $\varphi(e-\lambda x)\geq 0$ for every positive functional $\varphi$. Then, $\varphi(e) \geq \lambda \varphi(x)$. 	As $\varphi(e)>0$, we have that it is an order unit of the real numbers, which is archimidean. Then $\varphi(x) \leq 0$. As this is true for every positive functional, we have that $x$ is negative; thus $e$ is archimedean. By the Lemma \ref{ref:hayUnidOrd}, $||\cdot||_e \leq c||\cdot||$ for some positive $c$.  From Lemma \ref{normal} we see that we have a reversed inequality if and only if $\Omega$ is normal.
\end{proof}

\subsection{Jordan Algebras and the spectrum}

Let $\V$ be a real vector space with product $\circ$, possibly infinite dimensional. Then $(\V,\circ)$ is a \textit{Jordan algebra} if $\circ$ is commutative and
\begin{equation}\label{eq:jordanAlg}
	x^2 \circ (x \circ y) = x\circ (x^2 \circ y).
\end{equation}

Every associative algebra can be made into a Jordan algebra with the Jordan product $x \circ y = \frac{xy + yx}{2}$. These algebras are the \textit{special} Jordan algebras. A fundamental theorem in the theory follows (see \cite[pag. 41]{jacobson}):

\begin{teo}[McDonald's Theorem] Every polynomial Jordan identity in three variables and $1$, which is of degree at most $1$ in one of these variables and holds for all special Jordan algebras, is valid for all Jordan algebras.
\end{teo}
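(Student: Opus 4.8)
The plan is to reduce this identity statement to a structural fact about the free Jordan algebra on two generators. Write the distinguished low-degree variable as $z$, so that the hypothesis says the polynomial $P(x,y,z)$ is affine in $z$: $P = P_0(x,y) + \Lambda(x,y,z)$, where $\Lambda$ is homogeneous of degree one in $z$. Inspecting the parse tree of any Jordan monomial that contains $z$ exactly once, the path from the root to the single $z$-leaf multiplies $z$ successively by Jordan polynomials in $x,y$ alone; since $\circ$ is commutative each such product is a left multiplication $L_a$ with $a\in \mathrm{FJ}[x,y]$. Hence $\Lambda(x,y,z)=W(x,y)\,z$ for an operator $W=\sum \pm L_{a_1}\cdots L_{a_k}$ lying in the multiplication algebra of the free Jordan algebra $\mathrm{FJ}[x,y]$. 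Setting $z=0$ shows the identity is equivalent to the pair consisting of the two-variable identity $P_0(x,y)=0$, together with the operator identity $W(x,y)=0$ acting on the free slot occupied by $z$.

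Next I would realize both pieces through a single comparison map. Let $A=\mathrm{FAssoc}[x,y]$ be the free associative algebra, given its Jordan structure $a\circ b=\tfrac12(ab+ba)$, and let $S\subseteq A$ be the special Jordan subalgebra generated by $x,y,1$. There is a canonical surjection $\phi:\mathrm{FJ}[x,y]\twoheadrightarrow S$, and an expression in $x,y$ vanishes in every special Jordan algebra precisely when its image in $\mathrm{FJ}[x,y]$ lies in $\ker\phi$ (evaluate on this universal special example). The operator piece is handled the same way, since the multiplication algebra and its action on $z$ are determined by the algebra structure of $\mathrm{FJ}[x,y]$. Thus the entire theorem follows from the assertion that $\phi$ is \emph{injective}, i.e. that the free Jordan algebra on two generators is special.

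To prove injectivity I would argue by normal forms, following Cohn and Shirshov. Equip $A$ with the reversal anti-automorphism $*$ fixing $x$ and $y$; its symmetric (reversible) elements $H(A,*)$ form a Jordan subalgebra containing $S$. The first task is Cohn's description: for two generators one shows $S=H(A,*)$, the content being that every reversible associative word can be rebuilt from $x,y$ using only $\circ$, via the symmetrized triple products $\{a,b,c\}=(a\circ b)\circ c+(c\circ b)\circ a-(a\circ c)\circ b$ (which maps to the reversible element $\tfrac12(abc+cba)$) and their iterates. The second task is to exhibit an explicit spanning family of ``standard'' Jordan monomials in $\mathrm{FJ}[x,y]$ and to show, using only the Jordan axiom \eqref{eq:jordanAlg} and its multilinearizations, that every element straightens to a combination of these; one then checks that $\phi$ carries this family to a linearly independent subset of $H(A,*)$, forcing $\ker\phi=0$.

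The main obstacle is exactly this straightening argument: one must verify that the Jordan identities alone suffice to reduce an arbitrary two-generated Jordan expression to the standard monomials, creating no relations beyond those already forced inside $S$. This is the combinatorial heart of the theorem and is where the hypotheses ``two genuine variables, degree at most one in the third'' are indispensable — the analogous statement fails outright for three free generators, the obstruction being reversible associative elements such as $x_1x_2x_3+x_3x_2x_1$ that admit no expression through Jordan operations.
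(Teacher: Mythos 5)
The paper itself does not prove this theorem: it is quoted as a foundational known result from \cite[p.~41]{jacobson}. So the only question is whether your sketch would constitute a proof, and it would not: there is a genuine gap at the step where you claim that ``the entire theorem follows from the assertion that $\phi$ is injective, i.e.\ that the free Jordan algebra on two generators is special.'' Injectivity of $\phi:\mathrm{FJ}[x,y]\to \mathrm{FSJ}[x,y]$ (Shirshov's theorem) disposes only of the $z$-free part $P_0$. The operator part is, by your own parse-tree reduction, the statement that a certain element $\Lambda = W(x,y)z$ of $\mathrm{FJ}[x,y,z]$ vanishes, given that its image in $\mathrm{FSJ}[x,y,z]\subset\mathrm{FAssoc}[x,y,z]$ vanishes; what is needed is therefore injectivity of $\mathrm{FJ}[x,y,z]\to\mathrm{FSJ}[x,y,z]$ \emph{on the subspace of degree at most $1$ in $z$}. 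This is a statement about a different, larger free algebra, and speciality of $\mathrm{FJ}[x,y]$ does not imply it: relations among operator monomials $L_{a_1}\cdots L_{a_k}$ ($a_i\in\mathrm{FJ}[x,y]$) acting on an external element $z$ of an ambient Jordan algebra $J$ are not determined by the algebra $\mathrm{FJ}[x,y]\cong\mathrm{FSJ}[x,y]$ itself, because the operators act on all of $J$, not on the special subalgebra generated by $x,y,1$. Indeed the full map $\mathrm{FJ}[x,y,z]\to\mathrm{FSJ}[x,y,z]$ is certainly \emph{not} injective --- Glennie's identities lie in its kernel, and they have degree at least $2$ in every variable --- so no formal argument from the two-variable case can establish that the kernel avoids the $z$-linear part: that avoidance \emph{is} Macdonald's theorem. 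The true logical order is the reverse of the one you propose: Macdonald's theorem, applied to identities not involving $z$, yields Shirshov's theorem, not conversely. Consequently, even if your straightening program for $\mathrm{FJ}[x,y]$ were carried out in full, you would have proved Shirshov--Cohn but not the stated theorem; the normal-form/linear-independence argument must be performed for the operator monomials $L_{a_1}\cdots L_{a_k}z$ inside the $z$-linear part of $\mathrm{FJ}[x,y,z]$ (equivalently, for the universal multiplication envelope of $\mathrm{FJ}[x,y]$), which is exactly what the proof in \cite{jacobson} does and is where all the work lies.

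A secondary but telling error: your closing remark identifies the obstruction in three variables as ``reversible associative elements such as $x_1x_2x_3+x_3x_2x_1$ that admit no expression through Jordan operations.'' But $x_1x_2x_3+x_3x_2x_1 = 2\{x_1,x_2,x_3\}$ is precisely (twice) the symmetrized triple product you wrote down two sentences earlier, so it is a Jordan expression. The failure of Cohn's reversibility theorem occurs first for \emph{tetrads} $x_1x_2x_3x_4+x_4x_3x_2x_1$ in four distinct variables, while the failure of speciality of the free Jordan algebra on three generators is witnessed by Glennie's identities, not by triads. Keeping straight which of these three statements (speciality of the free algebra, Cohn's description of the reversible elements, and the operator-envelope statement) holds in which number of variables is not a peripheral matter here --- it is exactly the distinction whose collapse produces the central gap above.
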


\begin{defi}[Quadratic representation]For fixed $x\in \V$, define the operator $L_x: \V \to \V$ by means of $L_xy = x\circ y$, and consider the linear operator $\U_x = 2L_x^2 - L_{x^2}$. This is a representation of $\V$ in $\bv$ which is quadratic in $x$, the \textit{quadratic representation} of $\V$. If we compute the quadratic representation in an associative algebra with the Jordan product $a\circ b=1/2(ab+ba)$, we obtain $\U_xy = xyx$.

The quadratic representation gives us a bilinear representation:
\begin{equation}\label{du}
	\U_{x,y} = \frac{1}{2}(\U_{x+y} - \U_x - \U_y) = \frac{1}{2} D_y\U(x) = L_xL_y + L_yL_x - L_{x\circ y}.
\end{equation}
where $D_y f(x)$ denotes the differential of the map $f$ at the point $y$, in the direction of $x$. In an associative algebra with the Jordan product $a\circ b=1/2(ab+ba)$, we obtain $\U_{x,y}z = \frac{1}{2}(xzy+yzx)$.


It is possible to build  the Jordan structure around the quadratic representation, to get an equivalent formulation. The most important property of this representation is the \textit{fundamental formula}:
\begin{equation}\label{eq:fundamf}
	\U_{\;\U_x y} = \U_x\U_y\U_x = \U_{\;\U_x}(\U_y),
\end{equation}
where the last $\U$ is the quadratic representation of the associative algebra of operators $\bv$ with the Jordan product $a\circ b=1/2(ab+ba)$. A proof for JB-algebras can be derived using that it holds in a special JB algebra, and then using the theorem of McDonald cited above.
\end{defi}

\begin{rem}[Invertible elements]\label{invert}
	An element $x\in \V$ is \textit{invertible} if there exists $y\in \V$ such that $\U_xy = x$ and $\U_xy^2 = 1$. Note that althoug this definition implies that $x\circ y = 1$, it is not equivalent to it.  However: an element $x$ is invertible if and only if $\U_x$ is an invertible operator (for a proof see \cite[Part II, Criterion 6.1.2]{mccrimmon}). Moreover, $x,y$ are invertible if and only if $\U_xy$ is also invertible: this is plain from  $\U_{\U_xy}=\U_x\U_y\U_x$ (the fundamental formula) and the previous result.
\end{rem}

\begin{defi}[Spectrum]
	For $x\in \V$, the \textit{spectrum} of $x$ is defined as
	\begin{equation*}
		\sigma(x) = \{\lambda \in \R \text{ such that } x-\lambda 1 \text{ is not invertible}\}.
	\end{equation*}
Being the same set as the spectrum of $a$ as an element of the $C^*$-algebra generated by $1,a$, the spectrum is a nonempty compact set with nice properties, see Remark \ref{expsq} below, see \cite[p. 18]{alfschul} and \cite[$\S$ 10]{bonsall} for further details.
\end{defi}

\begin{defi}[Positive cone of a Jordan algebra]	Given $\V$ a Jordan algebra an element $x\in \V$ is \textit{positive} if $\sigma(x)$ is contained in the positive numbers. We denote by $\Omega$ the set of positive elements in $\V$. 	The set $\Omega$ is a convex cone and $\Omega \cap (-\Omega)=\emptyset$, hence $\Omega\subset \V$ is an open proper cone. Thus we have partial order in $\V$ as in the previous section:  given $x,y\in \V$, then $x<y$ if $y-x\in \Omega$.  It is plain that  $\overline{\Omega}$ is the set of elements with nonnegative spectrum, by the lower-semicontinuity of the spectrum map.
\end{defi}

\subsubsection{JB-algebras}

Our general reference for JB-algebras are the books \cite{stormer} and \cite{bonsall}.
\begin{defi}[JB-algebras]
	Let $\V$ be a Jordan algebra with norm $\|\cdot\|$ such that $(\V,\|\cdot\|)$ is a Banach space. The space $\V$ is a \textit{JB-algebra} if
$$
1)\quad\|x\circ y\|\leq \|x\| \|y\|\quad\qquad\qquad 2)\quad \|x^2\| = \|x\|^2\quad \qquad\qquad 3)\quad \|x^2\| \leq \|x^2 + y^2\|.
$$
\end{defi}

Then  every JB-algebra is \textit{formally real}: if $x^2 + y^2 = 0$, then both $x=y=0$.  The \textit{order norm} in $\V$ is the norm induced by the partial order and the chosen unit order $e=1$ in the cone $\Omega\subset \V$ (Definition \ref{ordernorm}):
	\begin{equation*}
		\|x\| = \inf\{\lambda > 0 : -\lambda 1 < x < \lambda 1\}.
	\end{equation*}

\begin{rem}
Every JB-algebra is archimedean: for every $x$ in $\V$ there exists $\lambda > 0$ such that $-\lambda 1< x < \lambda 1$ (see \cite[Proposition 3.3.10]{stormer} for a proof). Moreover, the order norm coincides with the original norm of the space $\V$ \cite[Proposition 3.3.10]{stormer}.
\end{rem}

As we said before, if $x$ has inverse $y$ it implies that $x \circ y = 1$ but this is not equivalent to being invertible. In other words, $L_x$ invertible implies that $x$ is invertible, but not the other way around. From this it is apparent that in general $\sigma(x)\subset \sigma(L_x)$ without equality.

\medskip

\begin{defi}
A subalgebra $\mathcal A\subset \V$ is called a \textit{strongly associative subalgebra} if $[L_a,L_b]=0$ for all $a,b\in \mathcal A$. For $x\in \V$, we have $[L_{x^n},L_{x^j}]=0$ for all $n,j$; thus if we let $\mathcal{C}(x)$ denote the closed subalgebra generated by $x$, then $\mathcal C(x)$ is a strongly associative subalgebra. 
\end{defi}

Thus there is an alternate characterization of invertibility, we include the proof to illustrate the methods:
\begin{lema}
	An element $x\in \V$ is invertible if and only if there exists $y\in\mathcal{C}(x)$ such that $x \circ y = 1$.
\end{lema}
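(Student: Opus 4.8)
The plan is to reduce everything to the spectral (functional) calculus inside the strongly associative subalgebra $\mathcal C(x)$. First I would record the trivial but crucial observation that, directly from the definition of the spectrum, $x$ is invertible if and only if $0\notin\sigma(x)$: indeed $0\in\sigma(x)$ means precisely that $x-0\cdot 1=x$ fails to be invertible. The content of the lemma is that membership $y\in\mathcal C(x)$ upgrades the weak relation $x\circ y=1$ to genuine invertibility, which fails for an arbitrary $y\in\V$.

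The main tool is the functional calculus for a single element. Since $\mathcal C(x)$ is strongly associative it is an associative, commutative JB-algebra, and the closed unital subalgebra it generates is isometrically isomorphic to $C(\sigma(x))$ in such a way that $x$ corresponds to the coordinate function $\iota\colon t\mapsto t$, the unit to the constant function $1$, and the Jordan product $\circ$ to pointwise multiplication (see \cite{stormer}). The underlying space here is exactly the JB-spectrum $\sigma(x)$, by spectral permanence: the spectrum of $x$ computed inside $\mathcal C(x)$ agrees with the one computed in $\V$, equivalently with invertibility of $\U_x$ as recorded in Remark \ref{invert}.

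For the forward implication, if $x$ is invertible then $0\notin\sigma(x)$, so $\iota$ vanishes nowhere on the compact set $\sigma(x)$ and $1/\iota\in C(\sigma(x))$; moreover, by Stone--Weierstrass the closed algebra generated by $\iota$ is all of $C(\sigma(x))$ once $0\notin\sigma(x)$, so the element $y\in\mathcal C(x)$ corresponding to $1/\iota$ really lies in $\mathcal C(x)$ and satisfies $x\circ y\leftrightarrow \iota\cdot(1/\iota)=1$, that is $x\circ y=1$. Conversely, if $y\in\mathcal C(x)$ satisfies $x\circ y=1$, writing $g$ for the function corresponding to $y$ we get $\iota(t)\,g(t)=1$ for every $t\in\sigma(x)$; hence $\iota$ cannot vanish on $\sigma(x)$, i.e. $0\notin\sigma(x)$, and therefore $x$ is invertible.

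The step I expect to be the main obstacle is the careful justification of the functional calculus together with spectral permanence: one must know that (the unital closure of) $\mathcal C(x)$ is isometrically a $C(\sigma(x))$ with $\circ$ given by pointwise multiplication, and that invertibility inside this subalgebra is equivalent to invertibility in $\V$. I also note that the naive algebraic shortcut --- trying to write $\U_x=L_x^2$ on $\V$ and conclude $\U_x\U_y=I$ --- is \emph{not} available, since $L_{x^2}\neq L_x^2$ as operators on $\V$ in general; strong associativity only forces the $L$'s attached to elements of $\mathcal C(x)$ to commute, not the global identity $L_{x^2}=L_x^2$. This is precisely why the subalgebra hypothesis $y\in\mathcal C(x)$ is essential and the statement nontrivial.
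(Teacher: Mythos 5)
Your function-theoretic manipulations are fine as far as they go (the observation that invertibility means $0\notin\sigma(x)$, the Stone--Weierstrass care with the non-unital algebra generated by $\iota$, the correct remark that $L_{x^2}\neq L_x^2$ in general), but the proof begs the question at its key step. The input you call ``spectral permanence'' --- that $\mathcal C(x)$ is isometrically isomorphic to $C(\sigma(x))$ with $x\mapsto\iota$, where $\sigma(x)$ is the spectrum defined via invertibility in $\V$ --- is not an independent fact from which the lemma follows; it essentially \emph{is} the lemma, applied to all translates $x-\lambda 1$. Saying that the Gelfand spectrum of the commutative Banach algebra $\mathcal C(x)$ coincides with $\{\lambda:\ x-\lambda 1\ \text{not invertible in}\ \V\}$ is exactly the assertion that Jordan invertibility in $\V$ can be tested inside $\mathcal C(x)$. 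What the spectral theorem of \cite{stormer} gives for free is an isomorphism $\mathcal C(x)\cong C(K)$ where $K$ is a priori the Gelfand spectrum of that commutative algebra; identifying $K$ with the spectrum of $x$ taken in $\V$ requires precisely the two implications you are supposed to prove. Your attempted anchor is also misplaced: Remark \ref{invert} says only that $x$ is invertible iff the operator $\U_x$ is invertible in $\bv$; it says nothing about whether an inverse can be found \emph{inside} $\mathcal C(x)$, which is the whole difficulty.

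The paper's proof supplies exactly this missing bridge, with no functional calculus. For the forward direction it writes $x^{-1}=\U_x^{-1}x$ and observes that $\U_x^{-1}$ is a limit of polynomials in $\U_x=2L_x^2-L_{x^2}$; since $L_x$ and $L_{x^2}$ are self-maps of the closed subalgebra $\mathcal C(x)$, so is every polynomial in $\U_x$, whence $x^{-1}\in\mathcal C(x)$. For the converse, if $y\in\mathcal C(x)$ and $x\circ y=1$, associativity of $\mathcal C(x)$ yields $\U_xy=x$ and $\U_xy^2=1$, which is the definition of invertibility recalled in Remark \ref{invert}. To repair your argument you would either have to prove spectral permanence yourself (which amounts to redoing this algebraic argument), or run everything in $\mathcal C(x)\cong C(K)$ for the abstract Gelfand spectrum $K$ and then still prove both implications ``invertible in $\V$ $\Rightarrow$ invertible in $\mathcal C(x)$'' and its converse --- that is, the actual content of the lemma.
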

\begin{proof}
	If $x$ is invertible, then $\U_x$ is an invertible operator and $x^{-1} = \U_x^{-1}x$. Since the inverse of $\U_x$ can be approximated with polynomials in $\U_x=2(L_x)^2-L_x^2$, and  $L_x,L_{x^2}$ commute and are self-maps of $\mathcal{C}(x)$, we see that $x^{-1} = \U_x^{-1}x\in \mathcal{C}(x)$. Conversely, if there exists $y\in \mathcal{C}(x)$ such that $x \circ y = 1$ then $y$ also satisfies the condition $\U_xy^2 = 1$, as $\mathcal{C}(x)$ is associative.
\end{proof}

\begin{rem}[Square roots and logarithms]\label{expsq}
The order norm on a JB-algebra allows the continuous functional calculus on $\V$. The associative commutative Banach algebra $\mathcal{C}(x)$ is isometrically isomorphic to $C(\sigma(x))$; a proof can be found in \cite[Theorem 3.2.4]{stormer}, and it uses a complexification of $\mathcal{C}(x)$ and the known spectral theorem for complex algebras. Then $\Omega$ can be characterized as the cone of squares of $\V$: every positive element has an square root, and as $\sigma(x^2) = \{\lambda^2, \lambda \in \sigma(x)\}$, every square is positive. Likewise, we can also characterize $\Omega=e^{\V}$: every positive element has a real logarithm, and  $\sigma(e^x) = \{e^\lambda, \lambda \in \sigma(x)\}$, so the exponential of every element is positive. Note that the quadratic representation is injective in the cone $\Omega$. Take $x$, $y$ positive elements such that $\U_x = \U_y$. Then $x^2 = \U_x(1) = \U_y(1) = y^2$. Since  there exists an unique positive square root, we have $x=y$.
\end{rem}

The following formula relating the product and the quadratic representation is well-known; it will  be useful later and we include a proof to shows that is also works well in the complexification of $\V$ (see the next section):
\begin{lema}\label{expu}
	Let $v$ be an element in $\V$. Then $e^{2L_v} = \U_{e^v}$.
\end{lema}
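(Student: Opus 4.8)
The plan is to realize both sides as solutions of a single linear initial value problem in the Banach algebra $\bv$ and then to conclude by uniqueness. Consider the two curves $Y_1(t)=e^{2tL_v}$ and $Y_2(t)=\U_{e^{tv}}$ in $\bv$, the latter defined through the functional calculus in the strongly associative subalgebra $\mathcal{C}(v)$. At $t=0$ both equal the identity, since $\U_1=2L_1^2-L_{1^2}=2I-I=I$. Clearly $Y_1'(t)=2L_vY_1(t)$. For $Y_2$, recall that $\U$ is quadratic, so its differential at $a$ in the direction $h$ equals $2\U_{a,h}$ by \eqref{du}; since $\frac{d}{dt}e^{tv}=v\circ e^{tv}=L_v e^{tv}$ in $\mathcal{C}(v)$, the chain rule gives $Y_2'(t)=2\U_{e^{tv},\,v\circ e^{tv}}$. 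Thus it suffices to establish the operator identity
\[
\U_{x,\,v\circ x}=L_v\U_x \qquad\text{for } x=e^{tv},
\]
for then $Y_2'(t)=2L_vY_2(t)$ as well, and uniqueness of solutions of $Y'=2L_vY$ with $Y(0)=I$ forces $Y_1\equiv Y_2$; evaluating at $t=1$ yields $e^{2L_v}=\U_{e^v}$.

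The displayed identity is the heart of the matter, and it is here that some care is needed: it is \emph{not} a valid two-variable Jordan identity. Indeed, in a special algebra, where $\U_a z=aza$ and $2L_a z=az+za$, a direct computation gives $\U_{x,v\circ x}(z)-L_v\U_x(z)=\tfrac14\bigl(xz[v,x]-[v,x]zx\bigr)$, which does not vanish unless $v$ and $x$ commute associatively. The way around this is that $x=e^{tv}$ lies in $\mathcal{C}(v)$: approximate $x$ in norm by polynomials $p_n(v)$. For each fixed $p_n$, the relation $\U_{p_n(v),\,v\circ p_n(v)}=L_v\U_{p_n(v)}$, read against an arbitrary test element $z$, is a polynomial Jordan identity in the single variable $v$ (and $1$), of degree one in $z$; in every special algebra $v$ commutes associatively with the polynomial $p_n(v)$, so the commutator above vanishes and the identity holds there. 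By McDonald's Theorem it then holds in \emph{every} Jordan algebra. Finally, since $a\mapsto L_a$, $a\mapsto\U_a$ and $(a,b)\mapsto\U_{a,b}$ are norm-continuous and $p_n(v)\to e^{tv}$, letting $n\to\infty$ produces the required identity for $x=e^{tv}$.

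The main obstacle is thus precisely the passage from the false generic identity to its valid restriction along powers of $v$, which is what makes McDonald's Theorem applicable; once this is in place, the uniqueness argument is routine. We finally remark that the proof transfers verbatim to the complexification $\V_{\mathbb C}$: the functional calculus, the operators $L$ and $\U$, and McDonald's Theorem are all available there, and the commutativity of $v$ with polynomials in $v$ is unaffected, so the same initial value problem and uniqueness argument give $e^{2L_v}=\U_{e^v}$ in $\B(\V_{\mathbb C})$ as well.
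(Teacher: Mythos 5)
Your proof is correct, but it takes a genuinely different route from the paper's. The paper applies the fundamental formula (\ref{eq:fundamf}) with $x=e^{\frac{t}{2}v}$, $y=e^{sv}$, together with $\U_{e^{\frac{t}{2}v}}e^{sv}=e^{(s+t)v}$ inside the associative algebra $\mathcal{C}(v)$, to get $F_{s+t}=F_{t/2}^2F_s$ for $F_t=\U_{e^{tv}}$; setting $s=0$ gives $F_t=F_{t/2}^2$, so $F$ is a one-parameter group, and then only the derivative at $t=0$, namely $F'(0)=2\U_{v,1}=2L_v$, is needed to conclude $F_t=e^{2tL_v}$. You instead compute the derivative at every $t$, which requires the operator identity $\U_{x,\,v\circ x}=L_v\U_x$ along $x=e^{tv}$ --- an identity you correctly observe is \emph{false} for general pairs $(v,x)$ --- and you establish it via McDonald's theorem for $x=p_n(v)$ (where the special-algebra verification reduces to the associative commutation $[v,p_n(v)]=0$), followed by norm continuity of $(a,b)\mapsto\U_{a,b}$ to pass to $x=e^{tv}$. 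Both arguments close the same way, by uniqueness for the linear ODE $Y'=2L_vY$, $Y(0)=Id$ in $\bv$. What the paper's route buys is brevity: the fundamental formula (itself a consequence of McDonald's theorem) packages all the needed algebra, and differentiability is only used at a single point. What your route buys is the intermediate identity $\U_{x,\,v\circ x}=L_v\U_x$ for $x\in\mathcal{C}(v)$, which is of independent interest, and your closing observation that the argument transfers verbatim to $\V^{\mathbb C}$ is exactly what is needed for the complexified statement $e^{2\mathbb{L}_x}=\mathbb{U}_x$ used later in Remark \ref{unoth} (the paper's proof transfers equally well, as that remark asserts).
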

\begin{proof}
	Let $F: \R \to \bv$ be $F_t = \U_{e^{tv}}$. As both $e^{\frac{t}{2}v}$ and $e^{sv}$ belong to $\mathcal{C}(v)$, we have that $\U_{e^{\frac{t}{2}v}}e^{sv}= e^{(s+t)v}$. Then,
	\begin{equation*}
		F_{s+t} = \U_{(s+t)v} = \U_{\U_{\exp({\frac{t}{2}v})}e^{sv}} = \U_{e^{\frac{t}{2}v}}\U_{e^{sv}}\U_{e^{\frac{t}{2}v}} = F_{\frac{t}{2}}^2F_s.
	\end{equation*}
	Replacing with $s=0$, we have that $F_t = F_{t/2}^2$. Then $F_{s+t} =F_s F_t=F_tF_s$ thus $F$ is a one-parameter group.  Now $		F'(0) = D_1(\U_-)e^{0v}v = 2\U_{v,1} = 2L_v$,  so it must be that that $F(t) = e^{2tL_v}$.
\end{proof}

\begin{rem}[The positive cone of a JB-algebra is symmetric and normal] The cone $\Omega$ is obviously proper and open, and it is homogeneous, as for every positive $x$ and $y$ we have that $y = \U_{y^{1/2}} \U_{x^{-1/2}}x$. The cone is self dual: this follows from the fact that $x\ge 0$ if and only $\varphi(x)\ge 0$ for any $\varphi\in \Omega^*$, and the fact that for any $x\in \V$ there exists $\varphi\in \Omega^*$ such that $\varphi(x)=\|x\|$ (see \cite[Lemma 1.2.5]{stormer} for a proof). Finally, the cone $\Omega$ is also normal  by Lemma \ref{NormalEqOrden} since in this case the original norm is the order unit norm.
\end{rem}

A Banach space with Jordan structure and symmetric cone of positives $\Omega$ cannot be a JB-algebra if the cone $\Omega$ is not normal, by Lemma \ref{NormalEqOrden}. Let us show an explicit example of this situation:

\begin{rem}[A Banach Jordan algebra whose norm is not equivalent to a JB-algebra norm] 	As we have seen in Example \ref{c1ex}, the cone $\Omega = \{f: f(x) >0 \text{ for all } x\}$ in $C^{1}[0,1]$ with the norm $||f||_{C^1} = ||f||_{\infty} + ||f'||_{\infty}$ is not normal. Note that $(C^{1}[0,1],||\cdot||_{C^1})$ is a Banach space and that $\Omega$ is the cone of positive elements, as $\sigma(f) = Im(f)$. As the cone is not normal, $||\cdot||_{C^1}$ cannot be equivalent to a JB-algebra norm. 
\end{rem}

The following nice characterization was recently proved by Chu \cite[Theorem 3.2]{chu2}, and gives a good geometric picture of the correspondence among JB-algebras and homogeneous normal cones:

\begin{teo} Let $\Omega$ be an homogeneous normal cone in a real Banach space $\V$. Then $\V$ is a unital JB-algebra (in an equivalent norm) with $\Omega$ as its positive cone, if and only if there exists a symmetric Banach manifold structure in $\Omega$ in the sense of Loos.
\end{teo}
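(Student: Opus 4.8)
The plan is to prove the two implications by quite different means: the forward direction (a JB-algebra yields the symmetric manifold) is an explicit construction, while the converse relies on the structural dictionary between symmetric Banach manifolds and Jordan triple systems.

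For the forward implication, given the unital JB-algebra $\V$ with positive cone $\Omega$, I would exhibit the symmetric structure directly by setting the point multiplication
\begin{equation*}
\mu(x,y)=\U_x\,y^{-1},\qquad x,y\in\Omega,
\end{equation*}
where $y^{-1}$ is the Jordan inverse and $\U_x$ the quadratic representation. Writing $s_x=\mu(x,\cdot)$, the identity $\U_x x^{-1}=x$ (Remark \ref{invert}) gives $s_x(x)=x$, and $\U_{x^{-1}}=\U_x^{-1}$ together with $(\U_x z)^{-1}=\U_{x^{-1}}z^{-1}$ gives $s_x\circ s_x=\mathrm{id}$. The Loos axiom $s_x s_y s_x=s_{s_x(y)}$ reduces, after applying Jordan inversion, to the fundamental formula \eqref{eq:fundamf} (one checks it first in a special algebra and invokes McDonald's theorem). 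Smoothness --- in fact analyticity --- of $\mu$ follows from the analyticity of $x\mapsto\U_x$ and of inversion on the open set $\Omega$, and since $s_e$ is inversion at $e=1$ its differential at $e$ is $-\mathrm{id}$, so $e$ is an isolated fixed point. Thus $\Omega$ is a symmetric Banach manifold in the sense of Loos.

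For the converse I would recover the Jordan data from the symmetric structure. Fix a base point $e\in\Omega$; its symmetry $s_e$ is an involutive diffeomorphism with $(ds_e)_e=-\mathrm{id}$, and this map will serve as Jordan inversion, the two-variable symmetric product reconstructing the quadratic operators $\U_x$. Differentiating $\mu$ at $e$ produces a triple product on $T_e\Omega=\V$, which by the Loos--Kaup--Upmeier correspondence is a (Banach) Jordan triple product; inserting the base point yields a commutative product $x\cdot y=\{x,e,y\}$ with unit $e$, and homogeneity presents $\Omega$ as a homogeneous symmetric space $G/K$ whose tangent algebra carries this structure. It then remains to promote the algebraic Jordan algebra so obtained to a JB-algebra in an equivalent norm: here normality enters decisively. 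By Lemma \ref{ref:hayUnidOrd} every $e\in\Omega$ is an order unit with $\|\cdot\|_e\le c\|\cdot\|$, while normality supplies the reverse bound $\|\cdot\|\le M\|e\|\,\|\cdot\|_e$ (Lemma \ref{normal}); hence the order-unit norm is equivalent to the original norm and provides the complete JB-norm. One then verifies formal reality and the spectral conditions so that $\Omega$ is exactly the cone of squares (cf. Remark \ref{expsq}), using the Kaup--Upmeier description of the positive cone \cite{kaup} and the finite-dimensional Koecher--Vinberg theorem \cite{koecher} as a model.

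The main obstacle is exactly this last promotion. In finite dimensions self-duality and compactness force the recovered Jordan algebra to be Euclidean, hence formally real; in the Banach setting neither tool is available, so formal reality and the JB-norm axioms $\|x^2\|=\|x\|^2$ and $\|x^2\|\le\|x^2+y^2\|$ must be extracted from normality of the cone alone. Normality is precisely what replaces self-duality: it yields a semi-monotone norm and a reproducing dual cone (Lemma \ref{normal}), which is the quantitative input needed to control the order-unit norm uniformly and to verify the JB inequalities. Making that control independent of the base point, and matching it with the analytic symmetric-space data, is the technical heart of the argument; the complete proof is carried out in \cite[Theorem 3.2]{chu2}.
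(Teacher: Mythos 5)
First, a point of comparison: the paper does not prove this theorem at all --- it appears there only as a quotation of Chu's result \cite[Theorem 3.2]{chu2}, used as background --- so there is no internal proof to measure your argument against. Judged on its own terms, your forward implication is essentially correct: the maps $s_x(y)=\U_x\,y^{-1}$ are the classical Koecher symmetries of the cone, and the axioms you list do reduce to identities available in the paper ($\U_x x^{-1}=x$, $\U_{x^{-1}}=\U_x^{-1}$, the fundamental formula (\ref{eq:fundamf})); moreover $d(s_x)_x=\U_x\circ(-\U_{x}^{-1})=-\mathrm{id}$ at \emph{every} $x\in\Omega$, which gives the isolated fixed point, and analyticity of $x\mapsto \U_x$ and of Jordan inversion on $\Omega$ gives smoothness of $\mu$. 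That half stands.

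The genuine gap is in the converse, and it is not a small one. The step ``differentiating $\mu$ at $e$ produces a triple product, which by the Loos--Kaup--Upmeier correspondence is a Banach Jordan triple product'' invokes a correspondence that is not available off the shelf in this generality: Kaup's theory identifies \emph{holomorphic} bounded symmetric domains in complex Banach spaces with JB$^*$-triples, whereas $\Omega$ is a \emph{real-analytic} symmetric Banach manifold. Passing from the real symmetric structure on $\Omega$ to a holomorphic object (via the tube or Siegel domain over $\Omega$), and then showing the recovered algebra is formally real, satisfies the JB-norm axioms, and has $\Omega$ exactly as its cone of squares, is precisely the content of Chu's theorem --- it cannot be quoted as an ingredient in its own proof. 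Your text in effect concedes this by closing with ``the complete proof is carried out in \cite[Theorem 3.2]{chu2}'', which makes the argument circular as a proof of the statement. What you have, then, is a correct proof of the easy implication together with a plan for the hard one whose essential step is delegated to the very result being proved; that leaves you no worse off than the paper (which delegates both directions), but it should be labelled a reduction to Chu's theorem, not a proof of it.
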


See \cite[Example 1.6]{chu} for the axiomatic definition of a structure of symmetric Banach manifold $X$, which essentially involves that each point of $x\in X$ is the unique fixed point of an involutive symmetry around $x$.

\subsection{Spectrum of the representations and positivity}

Functional calculus will link the positivity of elements with the positivity of their quadratic operators. 

\smallskip

Let $\V$ a JB-algebra, and consider  $\V^{\mathbb C}=\V\oplus i \V$ the Jordan algebra complexification of $\V$. We have that $\V^{\mathbb C}$ is a JB$^*$-algebra with a natural involuton $(a+ib)^*=a-ib$, and $\V=\{v\in \V^{\mathbb C}: v^*=v\}$.

\begin{defi}[Numerical ranges]	Let $x\in \V^{\mathbb C}$. The \textit{numerical range} of $x$ is the set
	\begin{equation*}
		V(x) = \{\phi(x): \phi\in (\V^{\mathbb C})^*, \phi(1) = \|\phi\| = 1\}\subset\mathbb C.
	\end{equation*}
An element $x\in \V^{\mathbb C}$ is \textit{Hermitian} if $V(x)\subset\mathbb \R$. We have that $\V=\textrm{Herm}(\V^{\mathbb C})$ by Theorem 7 in \cite{youngson}. The set $V(x)$ is compact, convex and nonempty. If $x\in \V$, we have that $V(x)=co(\sigma(x))$, where $co(\Sigma)$ is the convex hull of the set $\Sigma\subset\mathbb C$ (see \cite[$\S$ 10]{bonsall} for a proof of these facts). 

\smallskip

Let $X$ be a complex Banach space, let $T\in \B(X)$. The \textit{intrisic numerical range} of $T$ is the set
$$
V(T)=\{\psi(T): \psi\in \B(X)^*, \|\psi\|=\psi(1)=1\},
$$
and the \textit{spatial numerical range} of $T$ is the set
	\begin{equation*}
		W(T) = \{\psi(Tz): \psi\in X^*,z\in X, \psi(z) = 1, \|\psi\| = \|z\| = 1\}.
	\end{equation*}
We have $\overline{co(W(T))}=V(T)$ and $V(T)$ is compact nonempty and convex (see \cite{martin}). 	An operator $T\in \B(X)$ is \textit{Hermitian} if $W(T)\subset\mathbb R$ (equivalently, if $V(T)\subset\mathbb R$). 
\end{defi}	

\begin{lema}
For $x\in \V^{\mathbb C}$, consider the complexification $\mathbb L_x$ of $L_x$ given by $\mathbb L_x(v+iw)=L_xv+iL_xw$ for $v,w\in \V$. Then $V(x) = W(\mathbb L_x)$. 
\end{lema}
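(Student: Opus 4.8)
The plan is to prove the two inclusions $V(x)\subseteq W(\mathbb L_x)$ and $W(\mathbb L_x)\subseteq V(x)$ separately, exploiting two facts: the unit $1\in\V^{\mathbb C}$ satisfies $\mathbb L_a 1 = a\circ 1 = a$ and $\|1\|=1$, and the Jordan product is submultiplicative, so that $\|\mathbb L_a z\|=\|a\circ z\|\le \|a\|\,\|z\|$ in $\V^{\mathbb C}$ as well. This is the JB$^*$-analogue of the classical identification of the algebra numerical range of an element with the spatial numerical range of its left regular representation.

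For $V(x)\subseteq W(\mathbb L_x)$, I would start from an admissible functional $\phi\in(\V^{\mathbb C})^*$ with $\phi(1)=\|\phi\|=1$, and simply take the pair $z=1$, $\psi=\phi$ in the definition of $W$. Since $\|1\|=1$ we have $\|z\|=1$, and $\psi(z)=\phi(1)=1$ with $\|\psi\|=1$, so the pair is admissible; then $\psi(\mathbb L_x z)=\phi(x\circ 1)=\phi(x)$, which shows $\phi(x)\in W(\mathbb L_x)$.

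For the reverse inclusion I would, given an admissible pair $(\psi,z)$ with $\psi(z)=1$ and $\|\psi\|=\|z\|=1$, build the functional $\phi\in(\V^{\mathbb C})^*$ defined by $\phi(a)=\psi(\mathbb L_a z)=\psi(a\circ z)$. It is linear by bilinearity of $\circ$ and linearity of $\psi$, and satisfies $\phi(1)=\psi(1\circ z)=\psi(z)=1$. The key estimate is $|\phi(a)|\le\|\psi\|\,\|a\circ z\|\le\|a\|\,\|z\|=\|a\|$, which gives $\|\phi\|\le 1$, while $\phi(1)=1$ together with $\|1\|=1$ forces $\|\phi\|\ge 1$; hence $\phi$ is admissible for $V(x)$ and $\psi(\mathbb L_x z)=\phi(x)\in V(x)$.

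The only point requiring care---the main (and rather modest) obstacle---is the verification that the constructed $\phi$ is genuinely a normalized state, i.e. that $\phi(1)=\|\phi\|=1$; this rests on transporting the submultiplicativity $\|a\circ z\|\le\|a\|\,\|z\|$ and the normalization $\|1\|=1$ from the JB-algebra $\V$ to its complexification $\V^{\mathbb C}$, which is legitimate since $\V^{\mathbb C}$ is a unital JB$^*$-algebra. Everything else is a direct transcription of the definitions of $V$ and $W$.
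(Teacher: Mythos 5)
Your proposal is correct and follows essentially the same route as the paper's proof: both inclusions are obtained by the same admissible choices ($z=1$, $\psi=\phi$ in one direction; $\phi(a)=\psi(\mathbb L_a z)$ in the other), with the norm estimate via submultiplicativity of the Jordan product in $\V^{\mathbb C}$ and the normalization $\phi(1)=1$ forcing $\|\phi\|=1$. Nothing is missing.
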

\begin{proof}Suppose there is a value $\phi(x)$ in $V(x)$ with $\phi(1) = \|\phi\| = 1$. Take $z=1\in \V$, take $\psi = \phi$; then $\psi(z) = \phi(1) = 1$ and $\|z\| =\|1\| =1$, $\|\psi\| = \|\phi\| = 1$, so $\psi(\mathbb L_xz)$ belongs to $W(\mathbb L_x)$. But $\psi(\mathbb L_xz) = \phi(x)$, so we have $V(x) \subset W(\mathbb L_x)$. Reciprocally, suppose there is a value $\psi(\mathbb L_xz)$ in $W(\mathbb L_x)$ with $\psi(z) = 1$, $\|\psi\| = \|z\| = 1$. Define $\phi(y) = \psi(\mathbb L_yz)$ which is clearly linear and bounded. We have that $\phi(1) = \psi(z) = 1$. Moreover,
	\begin{equation*}
\|\phi\| = \|\psi \circ \mathbb L_z\| \leq \|\psi\| \,\|\mathbb L_z\| = \|\psi\|\,\|z\| = 1,
	\end{equation*}
	and as $\phi(1)=1$, $\|\phi\| = 1$. Then $\phi(x)$ belongs to $V(x)$. But $\phi(x) = \psi(\mathbb L_xz)$, so we have $W(\mathbb L_x) \subset V(x)$.
\end{proof}

\begin{rem}[$x$ versus $L_x$]\label{xvslx} If $x\in \V$, we consider the complexification $\mathbb L_x\in \B(\V^{\mathbb C})$ and by the previous lemma and remarks we have
$$
V(\mathbb L_x)=\overline{co(W(\mathbb L_x)}=\overline{co(V(x))}=V(x)\subset\R. 
$$
Thus $\mathbb L_x$ is Hermitian and $V(\mathbb L_x)=co(\sigma(\mathbb L_x))$ (see \cite[$\S$ 10]{bonsall}). Hence for $x\in \V$ we have
$$
co(\sigma(\mathbb L_x)) = V(\mathbb L_x) = \overline{co(W(\mathbb L_x))} = \overline{co(V(x))} = V(x) = co(\sigma(x))\subset\mathbb R
$$
(this is essentially Theorem 6 in \cite{youngson2}).
\end{rem}

\begin{defi}If we say that $\mathbb L_x$ is \textit{positive} if $\sigma(\mathbb L_x)\subset (0,+\infty)$, and that it is \textit{negative} if $\sigma(\mathbb L_x)\subset (-\infty,0)$, we see that $x\in \V$ is non-negative if and only if $\mathbb L_x$ is non-negative; and $x$ is non-positive if and only if $\mathbb L_x$ is non-positive.  Now it is not hard to see that $\sigma(\mathbb L_x)\cap\mathbb R= \sigma(L_x)$, where the latter is the set $\{t\in\mathbb R: L_x-t1 \textrm{ is not invertible in } \bv$\}. Thus we can conclude that \textit{$L_x$ has non-negative spectrum (resp. non-positive) if and only if $x\in \overline{\Omega}$ (resp. $x\in -\overline{\Omega}$)}.
\end{defi}

Recall that an element $x\in \V$ is \textit{central} if $L_x$ commutes with $L_z$ for all $z\in\V$.

\begin{defi}[Central symmetries]\label{cpcs} Let $p=p^2\in\V$ be an idempotent. We say that $p$ is a \textit{central projection} if $p$ is a central idempotent of $\V$. The element $\varepsilon_p=2p-1$ is a central \textit{symmetry}, that is $\varepsilon_p$ is invertible and $\varepsilon_p=\varepsilon_p^{-1}$. The elements $0,1\in \V$ are central projections with $\varepsilon_1 = 1$, $\varepsilon_0 = -1$. If $\varepsilon$ is a central symmetry then $\U_{\varepsilon}=1$, inded: 
$$\U_{\varepsilon}z = 2 \varepsilon\circ (\varepsilon\circ z) - \varepsilon^2\circ z = 2 z \circ \varepsilon^2 - z = z.
$$
\end{defi}

\begin{rem}[$\U$ is usually not Hermitian but has real spectrum]\label{unoth} Unlike the $L$ operators, the quadratic operator $\U_x$ is Hermitian if and only if $x$ is central (see \cite[Theorem 14]{youngson2}). Now, if $\mathbb U_x$ denotes the complexification of $\U_x$ to $\V^{\mathbb C}$, with the same proof than Lemma \ref{expu} we have that $e^{2 \mathbb L_x}=\mathbb U_x$ for any $x\in \V^{\mathbb C}$. In particular this shows that for $Z=\bv$, the inclusion $\exp(\textrm{Herm}(Z))\subset \textrm{Herm}(Z)$ does not hold, answering in the negative Problem 6.1(b) in \cite{neeb}; if $x\in\V$ is not a central element, then we have an Hermitian element $a=\mathbb L_x\in \B(Z)$ such that $a^2=1/2(\mathbb U_x+\mathbb L_{x^2})$ is not Hermitian.
\end{rem}

\begin{rem}
If $x\in \Omega$ we can write $x=e^v$ for $v\in \V$, and then $\sigma(\mathbb U_x)=\sigma(e^{2\,\mathbb L_v})\subset (0,+\infty)$,
since $\sigma(\mathbb L_v)\subset \mathbb R$. Then $\sigma(\U_x)=\sigma(\mathbb U_x)\cap \mathbb R\subset (0,+\infty)$ also.
\end{rem}

What follows is the main result of this section. For simple Euclidean Jordan algebras (which are finite dimensional), one can use the Pierce decomposition relative to a Jordan frame to obtain a very simple proof of this theorem (see \cite[Lemma VIII.2.7]{faraut}).

\begin{teo}\label{upositive}
Let $x\in \V$ a JB-algebra. Then $\sigma(\U_x)\subset (0,+\infty)$ if and only if $x = v \varepsilon$ where $v$ is positive and $\varepsilon$ is a central symmetry of $\V$.
\end{teo}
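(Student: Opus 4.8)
The plan is to route both implications through a Peirce decomposition attached to a spectral idempotent of $x$, so that the statement about $\sigma(\U_x)$ collapses to a sign computation on the off-diagonal Peirce space. I will first dispose of the easy implication. Suppose $x=v\circ\varepsilon$ with $v\in\Omega$ and $\varepsilon$ a central symmetry. Since $\varepsilon$ is central, $L_\varepsilon$ commutes with every $L_z$ and $L_{v\circ\varepsilon}=L_vL_\varepsilon$; moreover $\U_\varepsilon=1$ together with $\varepsilon^2=1$ forces $L_\varepsilon^2=\mathrm{id}$ (expand $\U_\varepsilon=2L_\varepsilon^2-L_{\varepsilon^2}$). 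Hence $x^2=L_x^21=L_v^2L_\varepsilon^21=v^2$ and $\U_x=2L_x^2-L_{x^2}=2L_v^2L_\varepsilon^2-L_{v^2}=2L_v^2-L_{v^2}=\U_v$, which has spectrum in $(0,+\infty)$ because $v$ is positive (this is the remark preceding the statement, via Lemma~\ref{expu}). So this implication is immediate.

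For the converse, assume $\sigma(\U_x)\subset(0,+\infty)$. Then $\U_x$ is invertible, so $x$ is invertible (Remark~\ref{invert}) and $0\notin\sigma(x)$. Using the functional calculus $\mathcal C(x)\cong C(\sigma(x))$, I would split $\sigma(x)=\sigma_+\sqcup\sigma_-$ into its positive and negative parts, let $p\in\mathcal C(x)$ be the corresponding spectral idempotent, and set $\varepsilon=2p-1$, $v=|x|$, $x_1=\U_px\in\V_1(p)$, $x_0=\U_{1-p}x\in\V_0(p)$, so that $x=x_1+x_0$ with $x_1$ positive in the JB-algebra $\V_1(p)$ and $x_0$ negative in $\V_0(p)$. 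Because $x$ commutes with $p$ it has no $\V_{1/2}(p)$-component, hence $L_x$ — and therefore $\U_x=2L_x^2-L_{x^2}$ — preserves each Peirce space. On $\V_1(p)$ and $\V_0(p)$ the restriction is $\U_{x_1}$ and $\U_{x_0}=\U_{-x_0}$, both with positive spectrum, so all the content is concentrated in the middle block.

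The heart of the argument is the middle block. Using the Peirce relations $2L_a^2=L_{a^2}$ on $\V_{1/2}(p)$ for $a\in\V_1(p)\cup\V_0(p)$, together with the commutation of $L_{\V_1(p)}$ and $L_{\V_0(p)}$ on $\V_{1/2}(p)$, a direct expansion of $2L_x^2-L_{x^2}$ will yield $\U_x|_{\V_{1/2}(p)}=4\,L_{x_1}L_{x_0}|_{\V_{1/2}(p)}$. I then claim $L_{x_1}|_{\V_{1/2}(p)}$ has spectrum in $(0,+\infty)$ and $L_{x_0}|_{\V_{1/2}(p)}$ has spectrum in $(-\infty,0)$: writing $x_1=a^2$ with $a=x_1^{1/2}\in\V_1(p)$ invertible, the same relation gives $L_{x_1}|_{\V_{1/2}(p)}=2\,(L_a|_{\V_{1/2}(p)})^2$, and since $\mathbb L_a$ is Hermitian on $\V^{\mathbb C}$ (Remark~\ref{xvslx}) and $\V_{1/2}(p)$ is a closed invariant subspace, the restriction is Hermitian with real spectrum, so $L_{x_1}|_{\V_{1/2}(p)}$ has spectrum in $[0,+\infty)$; invertibility of this block (inherited from that of $\U_x$) together with the commutativity of the two factors upgrades it to $(0,+\infty)$, and symmetrically for $x_0$. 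As $L_{x_1}$ and $L_{x_0}$ commute on $\V_{1/2}(p)$, the spectrum of their product lies in the product of the spectra, whence $\sigma(\U_x|_{\V_{1/2}(p)})=\sigma(4L_{x_1}L_{x_0})\subset(-\infty,0)$. This contradicts $\sigma(\U_x)\subset(0,+\infty)$ unless the space is trivial, so $\V_{1/2}(p)=0$, i.e.\ $p$ is a central projection; then $\varepsilon=2p-1$ is a central symmetry, $v=|x|$ is positive, and $x=v\circ\varepsilon$, finishing the proof.

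The \textbf{main obstacle} is the middle-block analysis: establishing the identity $\U_x|_{\V_{1/2}(p)}=4L_{x_1}L_{x_0}$ and then pinning down the \emph{sign} of its spectrum. This requires combining the Peirce operator relation $2L_a^2=L_{a^2}$ (most cleanly verified in a special algebra and transferred by McDonald's Theorem) with the Hermitian-ness of $\mathbb L_a$ to locate the spectrum of each factor, and finally invoking the product-spectrum inclusion for commuting operators to force the off-diagonal block to be negative; this last sign is precisely what obstructs the non-central case and compels $\V_{1/2}(p)=0$.
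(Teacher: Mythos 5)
Your proposal is correct and follows essentially the same route as the paper's own proof: both split $\V$ by the Peirce projections attached to the spectral idempotent $p_+=\chi_+(x)$, show each Peirce block is $\U_x$-invariant, identify the off-diagonal block of $\U_x$ as $4L_{x_+}L_{x_-}$, and use the product-spectrum inclusion for commuting operators to push that block's spectrum into the negative reals, which contradicts $\sigma(\U_x)\subset(0,+\infty)$ unless the block vanishes and $p_+$ is central. The differences are only at the level of detail (Peirce multiplication rules and the Hermitian-restriction argument for $L_a|_{\V_{1/2}}$, versus the paper's operator identities in the strongly associative subalgebra $\mathcal C(x)$ and Remark \ref{xvslx}), plus the small point that ``$\V_{1/2}(p)=0$ implies $p$ central'' deserves the one-line justification the paper gives via \cite[Theorem 5]{youngson2}.
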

\begin{proof}
Assume that $x = v \varepsilon$ with positive $v$ and central symmetry $\varepsilon$. We have that $\U_x = \U_{v\varepsilon} = \U_v\U_{\varepsilon}=\U_v$, as $\varepsilon$ is central; so we can assume that $x$ is positive. But then $\sigma(\U_x)\subset (0,+\infty)$ follows from the previous remark.  To prove the converse statement we will use the Pierce decomposition. Given two supplementary orthogonal idempotents $e$ and $e' = 1-e$ one can define three projections $\U_e$, $\U_{e'}$ and $2\U_{e,e'}$. They form a supplementary family of projection operators on $\V$, which breaks into the direct sum of their ranges. For more details see \cite[Part II, Chapter 8]{mccrimmon}. Let $x\in \V$ an invertible element,  let $\chi_+$ denote  the characteristic function of the positive numbers, and let $\chi_-$ denote  the characteristic function of the negative numbers (both continuous functions on $\sigma(x)$, as $0$ does not belong to the spectrum of $x$). Let $p_+ = \chi_+(x)$ and $p_- = \chi_-(x)$. Since $\chi_+ + \chi_- = 1$, $\chi_+ \chi_- = 0$ and $\chi_{\pm}^2 = \chi_{\pm}$, $p_+$ and $p_-$ are supplementary orthogonal idempotents, ie. $p_{\pm}^2 = p_{\pm}$, $p_+ + p_- = 1$ and $p_+\circ p_- = 0$. Note that $x$ is either positive or negative if and only if they are the unit and the null elements. Consider the operators $\U_{p_+}$, $\U_{p_-}$ and $2\U_{p_+,p_-}$. Since $p_+$ and $p_-$ are idempotents belonging to the strongly associative subalgebra $\mathcal C(x)$, by \cite[Chapter 1, Section 8, Lemma 8]{jacobson} we have that $\U_{p_+}^2=\U_{p_+^2} = \U_{p_+}$, and $\U_{p_-}^2=U_{p_-^2} = \U_{p_-}$. So $\U_{p_+}$ and $\U_{p_-}$ are projections; moreover by the same lemma, $\U_{p+}\U_{p-}=\U_{p_+\circ p_-}=\U_0=0$, thus they are orthogonal and their sum $S=\U_{p+}+\U_{p-}$ is also a projection. As $\U_{p_+}+\U_{p_-} + 2\U_{p_+,p_-} = \U_{p_++p_-} = \U_1 = Id$, from $2\U_{p_+,p_-}=1-S$ we conclude that $2\U_{p_+,p_-}$ is also a projection. Let $J_0$ be the range of $2\U_{p_+,p_-}$, $J_+$ the range of $\U_{p_+}$ and $J_-$ the range of $\U_{p_-}$. Note that this decomposition is trivial if $x$ is positive: in this case $J_+ = \V$ and the other two spaces are the null space. Similarly, if $x$ is negative, $J_-= \V$ and the other two spaces are null. In general, $J_0$ is trivial if and only if $p_+$ is central. If $p_+$ is central so is $p_-$ and $L_{p_+}L_{p_-}y = p_+\circ(p_- \circ y) = y \circ (p_+ \circ p_-) = 0$, then $U_{p_+,p_-} = 0$ and $J_0$ is trivial. If $J_0$ is trivial, then $\U_{p_+,p_-} = 0$ and in particular $L_{p_+}L_{p_-} = 0$. This implies that $L_{p_+}^2 = L_{p_+}$ and that $U_{p_+} = L_{p_+} = L_{p_+^2}$, and by \cite[Theorem 5]{youngson2} $p_+$ is central. Now, if $J_0$ is trivial and $p_+$ central define $v = p_+x - p_-x$, which is positive as it is the sum of two positive elements, and $\varepsilon_p = 2p_+-1$, a central symmetry as $p_+$ is a central projection. An easy computation gives us $x = v\varepsilon_p$. If $J_0$ is not trivial, we claim that $J_i$ is an invariant space by $\U_x$. To prove it, let $y\in \V$ belong to $J_+$, i.e. $y = \U_{p_+}y$. Since  $x$ and $p_+$ belong to the strongly associative Jordan subalgebra $\mathcal{C}(x)$, 
\begin{equation*}
	\U_xy = \U_x\U_{p_+}y= \U_{xp_+}y = \U_{p_+x}y = \U_{p_+}\U_xy,
\end{equation*}
thus $\U_xy$ belongs to $J_+$ and $J_+$ is invariant by $\U_x$. An analogous computation tells us that $J_-$ is also invariant. Now, let $y$ belong to $J_0$, $y = 2\U_{p_+,p_-}y$, then
\begin{align*}
\U_xy & = \U_x2\U_{p_+,p_-}y = \U_x(\U_{p_++p_-} - \U_{p_+} - \U_{p_-})y = (\U_{p_++p_-} - \U_{p_+} - \U_{p_-})\U_xy\\
&= 2\U_{p_+,p_-}\U_xy,
\end{align*}
and it follows that $\U_xy$ belongs to $J_0$ hence $J_0$ is invariant by $\U_x$.  Since we have $\U_{p_+}+\U_{p_-} + 2\U_{p_+,p_-} = Id$, then $\oplus J_i = \V$, and as each $J_i$ is invariant for $\U_x$, we have that 
\begin{equation*}
	\sigma(\U_x) = \sigma(\U_x|_{J_+}) \cup \sigma(\U_x|_{J_-}) \cup \sigma(\U_x|_{J_0}).
\end{equation*}
We have to study the restriction of $\U_x$ to each subspace.  Let us study first the restriction of $\U_x$ to $J_+$ and $J_-$: if $y$ belongs to $J_+$, then $\U_xy = \U_x\U_{p_+}y = \U_{xp_+}y$ (again, the last equality holds by \cite[Chapter 1, Section 8, Lemma 8]{jacobson}, as $x$ and $p_+$ belong to $\mathcal{C}(x)$). Define $x_+ = xp_+$, then $\U_x|_{J_+} = \U_{x_+}$. Analogously, define $x_- = xp_-$ and we have that $\U_x|_{J_-} = \U_{x_-}$. Note that the spectrum of $x_+$ is the positive component of the spectrum of $x$, and the spectrum of $x_-$ is the negative component of the spectrum of $x$. As such, $x_+$ is positive and $x_-$ is negative, and as we proved before, $\U_{x_+}$ and $\U_{x_-}$ are positive operators.  Then, $\sigma(\U_x|_{J_+}),\sigma(\U_x|_{J_-}) \subset \mathbb{R}_{>0}$. Since $x_+ + x_- = x$, and as $x$, $p_+$ and $p_-$ belong to an associative subalgebra of $\V$, we have that $x_+\circ x_- = (xp_+)\circ (xp_-) = x^2\circ (p_+\circ p_-) =0$. Now let us study the restriction of $\U_x$ to $J_0$: if $y$ belongs to $J_0$, then
\begin{equation*}
\begin{aligned}
	\U_xy &= \U_x2\U_{p_+,p_-}y = \U_x(Id - \U_{p_+} - \U_{p_-})y = (\U_x - \U_x\U_{p_+} - \U_x\U_{p_-})y \\
	&=(\U_{x_+ + x_-} - \U_{x_+} - \U_{x_-})y = 2\U_{x_+,x_-}y.
\end{aligned}
\end{equation*}
Moreover, as $x_+$, $x_-$ belong to the commutative associative subalgebra $\mathcal{C}(x)$, we have that $L_{x_+}$ and $L_{x_-}$ commute, and since $x_+\circ x_-=0$, then 
\begin{equation*}
	2\U_{x_+,x_-} = 2(L_{x_+}L_{x_-} + L_{x_-}L_{x_+} - L_{x_+\circ x_-}) = 4L_{x_+}L_{x_-}.
\end{equation*}

Suppose that $\sigma(\U_x)\subset (0,+\infty)$, in particular $\U_x$ is invertible, thus $x$ is invertible (Remark \ref{invert}). Assumming that the spectrum of $x$ has a positive and a negative value, we will arrive to a contradiction. We have to analyze the restriction of $\U_x$ to $J_0$, which is not null. Since $L_{x_+}$ commutes with $L_{p_+}$ and $L_{p_-}$, it is plain that $L_{x_+}$ commutes with $\U_{p_+,p_-}$ thus $J_0$ is invariant for $L_{x_+}$. Likewise, $J_0$ is invariant for $L_{x_-}$. Then 
\begin{equation*}
	\sigma(\U_x|_{J_0}) = 	\sigma(2\U_{x^+,x^-}|_{J_0}) =4\sigma(L_{x_+}L_{x_-}|_{J_0}) \subset 4\sigma(L_{x_+}L_{x_-}).
\end{equation*}
Now if $A,B$ are commuting elements of a Banach algebra, then $\sigma(AB)\subset \sigma(A)\sigma(B)$, and since the spectrum of $L_{x_+}$ is positive and the spectrum of $L_{x_-}$ is negative by Remark \ref{xvslx} and the fact they commute we have that 
\begin{equation*}
	\sigma(\U_x|_{J_0})  \subset 4\sigma(L_{x_+}L_{x_-}) \subset 4\sigma(L_{x_+})\sigma(L_{x_-}) \subset (-\infty,0].
\end{equation*}
As $\U_x$ is invertible and $J_0$ is not null, $\sigma(\U_x|_{J_0}) \subset (-\infty,0)$ and this shows that $\U_x$ has a negative number in its spectrum, a contradiction.
\end{proof}

\begin{rem}\label{uequis}
For $x\in \V$ invertible, if $\chi_{\pm}$ denotes the characteristic function of the positive and negative parts of $\sigma(x)$, we can write $x=x_++x_-=p_+x+p_-x$ with $p_{\pm}=\chi_{\pm}(x)\in \mathcal C(x)$. Then the previous proof shows that 
$$
\U_x=\U_{x_+}+\U_{x_-}+4L_{x_+}L_{x_-},
$$
the sum of 3 mutually disjoint operators acting on the subspaces $J_+=\ran(\U_{p_+})$, $J_-=\ran(\U_{p_-})$ and $J_0=\ran(\U_{p_+,p_-})$ respectively, with $\V=J_+\oplus J_-\oplus J_0$. Thus
$$
\sigma(\U_x) = \sigma(\U_{x_+})\sqcup \sigma(\U_{x_-})\sqcup 4\sigma(L_{x_+}L_{x_-})
$$
and moreover $\sigma(L_{x_+}L_{x_-})\subset \sigma(L_{x_+})\sigma(L_{x_-})$ since these commute.
\end{rem}

\section{The structure group and its Lie algebra}\label{s3}

We will consider a group of automorphisms that will act on $\Omega$; we want it to be a Banach-Lie group. To give an appropiate Lie and manifold structure to the group of transformations that fix the cone, we will study first a larger group and derive the structure from there. If we look at the fundamental formula (\ref{eq:fundamf}), all transformations $g\in \bv$ in the image of the quadratic representation of $\V$ have the property that $\U_{gx} = g\U_xg$ for all $x\in \V$. In other words, for each $g$ there exists another transformation (in this case the same $g$) such that the equality holds. There are other distinctive transformations that share the same property:

\begin{defi}[Structure Group]
	The structure group of $\V$ is the set of $g\in\glv$ such that there exists another $g^*\in \glv$ with
	\begin{equation*}
		\U_{gx} = g \U_x g^*
	\end{equation*}
	for all $x$ in $\V$.
	We denote the structure group as $\Str$, below we recall how is it in fact a group, and present it in a fashion that will enable the construction of its Banach manifold structure.
\end{defi}

\begin{rem}[The adjoint and group operations]
	Clearly $Id$ belongs to $\Str$, as $\U_{Idx} = \U_x = Id\U_xId$. If $g$ and $h$ belong to $\Str$, then $\U_{ghx} = g\U_{hx}g^* = gh\U_xh^*g^*$, so $gh$ belongs to $\Str$ and $(gh)^* = h^*g^*$. Finally, 
	\begin{equation*}
		\U_{g^{-1}x} = g^{-1}g\U_{g^{-1}x}g^*(g^*)^{-1} = g^{-1}\U_{gg^{-1}x}(g^*)^{-1} = g^{-1}\U_x(g^*)^{-1},
	\end{equation*}
	so $g^{-1}$ belongs to $\Str$ and $(g^{-1})^* = (g^*)^{-1}$. 

	If $g$ belongs to the structure group so does $g^*$, and $(g^*)^* = g$. To prove it, note that for $x = 1$, we have $\U_{g1} = g\U_1g^* = gg^*$, thus 
	$$
	g^* = g^{-1}\U_{g1} \qquad \forall g\in \Str.
	$$
 	$\U_{g1}$ also belongs to the structure group, hence $g^* = g^{-1}\U_{g1}$ belongs to $\Str$. Moreover,
	\begin{equation*}
	\begin{aligned}
		(g^*)^* &= (g^{-1}\U_{g1})^* = \U_{g1}^*(g^{-1})^* = \U_{g1}((g^*)^{-1}) = \U_{g1}((g^{-1}\U_{g1})^{-1}) = \U_{g1}\U_{g1}^{-1} g = g.
	\end{aligned}
	\end{equation*}
\end{rem}

\begin{rem}	Let $y$ be an invertible element in $\V$, then $\U_y$ belongs to the structure group and $\U_y^* = \U_y$. This is a direct consequence of the fundamental formula and from the fact that $y$ is invertible if and only if $\U_y$ is invertible.
\end{rem}

\begin{rem}\label{ginvertible}
If $g\in \Str$, then we know that for every $x\in \V$ we have $\U_{gx} = g\U_xg^*$. Then, for every invertible $x$, we see that $\U_{gx}$ is invertible, and so is $gx$. As we discussed before, $g^* = g^{-1}\U_{g1}$ and then it is clear that $g\in \Str$ if and only if $g(1)$ is invertible and
$$
\U_{gx}=g\U_xg^{-1}\U_{g1}\qquad \forall\, x\in \V.
$$
Note also that since for $g\in \Str$  we have $\U_{g1} = g\U_1g^* = gg^*$, then by Theorem \ref{upositive} $\sigma(gg^*)\subset (0,+\infty)$ if and only if $g(1) = v\varepsilon_p$ for $v \in \Omega$ and $\varepsilon_p$ central symmetry.
\end{rem}

\begin{lema}\label{esalge}Let $g\in \glv$. Then $g\in \Str$ if and only if 
$$
\U_{gx}=g\U_xg^{-1}\U_{g1}  \quad \textrm{ and }\quad \U_{g^{-1}x}=g^{-1}\U_xg\U_{g^{-1}(1)} \qquad \forall \, x\in \V.
$$
\end{lema}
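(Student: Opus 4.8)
The plan is to lean on Remark \ref{ginvertible}, which already characterizes membership in $\Str$ as: $g(1)$ invertible \emph{and} the single identity $\U_{gx}=g\U_xg^{-1}\U_{g1}$ valid for all $x\in\V$. From this viewpoint the content of the lemma is to replace the non-algebraic hypothesis ``$g(1)$ is invertible'' by the second functional identity, which is polynomial in $g$ and $g^{-1}$.

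For the forward implication there is nothing to do beyond invoking the group structure. If $g\in\Str$ then the first identity is Remark \ref{ginvertible} verbatim. Since $\Str$ is a group, $g^{-1}\in\Str$, and feeding $g^{-1}$ into Remark \ref{ginvertible} gives $\U_{g^{-1}x}=g^{-1}\U_x(g^{-1})^{-1}\U_{g^{-1}1}=g^{-1}\U_xg\U_{g^{-1}(1)}$, the second identity.

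The backward implication carries the work. Writing $B=\U_{g1}$ and $C=\U_{g^{-1}1}$, first I would substitute one identity into the other. Replacing $x$ by $g^{-1}y$ in the first identity and using $\U_{gg^{-1}y}=\U_y$ gives $\U_y=g\U_{g^{-1}y}g^{-1}B$; plugging in the second identity $\U_{g^{-1}y}=g^{-1}\U_ygC$ then yields
\[
\U_y=\U_y\,(gCg^{-1}B)\qquad\text{for all }y\in\V.
\]
The symmetric manipulation (replace $x$ by $gy$ in the second identity, then substitute the first) gives $\U_y=\U_y\,(g^{-1}BgC)$ for all $y$. Evaluating both at $y=1$, where $\U_1=Id$, collapses the free factor $\U_y$ and leaves the two operator equations
\[
gCg^{-1}B=Id\qquad\text{and}\qquad g^{-1}BgC=Id.
\]

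It remains to upgrade these to genuine invertibility of $\U_{g1}$. Conjugating the second equation by $g$ turns it into $B(gCg^{-1})=Id$, while the first equation is $(gCg^{-1})B=Id$; hence $gCg^{-1}=g\U_{g^{-1}1}g^{-1}$ is a two-sided inverse of $B=\U_{g1}$, so $\U_{g1}$ lies in $\glv$. By Remark \ref{invert} this is equivalent to $g(1)$ being invertible, and since the first identity is assumed, Remark \ref{ginvertible} delivers $g\in\Str$. I expect the only genuine obstacle to be infinite-dimensional in nature: the equation $gCg^{-1}B=Id$ on its own shows merely that $\U_{g1}$ is left invertible, which does not force invertibility. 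This is exactly why the second functional identity cannot be dropped — conjugating it supplies the matching right inverse — and why $\U_1=Id$ is used twice, once to eliminate $\U_y$ and once to keep the final conjugation transparent.
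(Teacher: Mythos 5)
Your proof is correct and follows essentially the same route as the paper: the forward direction invokes Remark \ref{ginvertible} for $g$ and $g^{-1}$, and the backward direction combines the two identities at the unit (where $\U_1=Id$) to exhibit a two-sided inverse of $\U_{g1}$, then concludes via Remark \ref{invert} and the definition of $\Str$. The only cosmetic difference is that the paper substitutes $x=g^{-1}(1)$ and $x=g(1)$ directly instead of passing through the general identities $\U_y=\U_y(gCg^{-1}B)$ and evaluating at $y=1$, which yields the same two operator equations.
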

\begin{proof}
As we discussed above, when $g\in \Str$, then $g(1)$ is invertible and we have $g^*=g^{-1}\U_{g1}$; moreover $g^{-1}\in \Str$ thus $g$ obeys both equations of the lemma. Now assumme that $g\in \glv$ obeys both equations. In particular replacing $x=g^{-1}(1)$ in the first equation and $x=g(1)$ in the second we have
$$
1=\U_1=g\U_{g^{-1}(1)} g^{-1}\U_{g1}\quad  \textrm{ and } \quad 1=\U_1=g^{-1}\U_{g1} g\U_{g^{-1}(1)}.
$$
This tells us $g^{-1}\U_{g1}$ is invertible (with inverse $g\U_{g^{-1}(1)}$), thus $\U_{g1}$ is invertible, hence $g(1)$ is invertible. Now if we define $g^*=g^{-1}\U_{g1}$ it is clear that $g^*\in \glv$, and on the other hand because of the first equation $\U_{gx}=g\U_x g^*$, thus $g\in \Str$.
\end{proof}

\begin{rem}[Complexification of operators in the Structure Group]\label{gcestr}Let $g\in \Str$, let $g^{\mathbb C}(a+ib)=ga+igb$ be its complexification to $\V^{\mathbb C}$, then $g\in \glv$  and, denoting $g^{\mathbb C}=g$ for short, we have
\begin{align*}
\U_{g(a+ib)}& =\U_{ga+igb}=\U_{ga}+\U_{igb}+2 \U_{ga,igb}=\U_{ga}-\U_{gb}+2i\U_{ga,gb}\\
&= \U_{ga}-\U_{gb}+i(\U_{ga+gb}-\U_{ga}-\U_{gb})\\
&= g\U_ag^*-g\U_bg^*+i(g\U_{a+b}g^*-g\U_ag^*-g\U_bg^*)\\
& = g\U_ag^*-g\U_bg^*+i(g\U_ag^*+g\U_bg^*+2g\U_{a,b}g^*-g\U_ag^*-g\U_bg^*)\\
&=g\U_ag^*-g\U_bg^*+2ig\U_{a,b}g^*=g(\U_a-\U_b+2i\U_{a,b})g^*\\
&=g\U_{a+ib}g^*.
\end{align*}
Thus $g^{\mathbb C}$ belongs to $\texttt{Str}(\V^{\mathbb C})$
\end{rem}

\begin{defi}An element $k\in \glv$ is a \textit{multiplicative automorphism} of $\V$ (or an automorphism of $\V$ for short) if $k(a\circ b)=k(a)\circ k(b)$ for all $a,b\in \V$. We will denote this set with $\Aut$. 
\end{defi}

Note that $k(L_xy)=k(x\circ y)=(kx)\circ (ky)=L_{kx}ky$, then
$$
kL_vk^{-1}=L_{kv} \quad\textrm{ and }\quad \U_{kv}=k\U_vk^{-1}\quad \forall\, v\in \V.
$$
In particular $\Aut$ preserves the invertibles and $k(v)^{-1}=k(v^{-1})$. Moreover $\Aut$ preserves the cone $\Omega$, since $k(v^2)=(kv)^2$ and the cone can be characterized as the set of invertible squares in $\V$ (Remark \ref{expsq}).

\begin{rem}[$\Aut$ is a subgroup of $\Str$ and if $k\in \Aut$, then $k^*=k^{-1}$]\label{rem:autinv} This can be seen as follows: $k$ is a multiplicative automorphism, then 
		\begin{equation*}
			k\U_x(b) = k(2x\circ(x\circ b) - x^2\circ b) = 2k(x)\circ(k(x)\circ k(b)) - k(x)^2\circ k(b) =\U_{k(x)}(k(b))
					\end{equation*}
thus $k\U_x = \U_{kx}k$, and if we set $k^*=k^{-1}$ we clearly have $\U_{kx} = k\U_xk^{-1}=k\U_xk^*$ which proves the assertions.
\end{rem}

The following well-known characterization will be used repeatedly. Since we are interested in both the real and complex cases, we include a proof: 

\begin{lema}\label{auto1} Let $k\in\Str$, then $k\in \Aut$ if and only if $k1=1$. Likewise, if $k\in\texttt{Str}(V^{\mathbb C})$, then $k\in \texttt{Aut}(V^{\mathbb C})$ if and only if $k1=1$. 
 \end{lema}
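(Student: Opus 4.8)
The plan is to prove the two implications separately and to note that the whole argument is purely algebraic, so it transfers verbatim to the complexification $\V^{\mathbb C}$.

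For the forward implication, suppose $k\in\Aut$. The key observation is that $k(1)$ must behave like a unit: for every $w\in\V$ write $w=k(v)$ (possible since $k\in\glv$ is surjective) and compute $k(1)\circ w = k(1)\circ k(v) = k(1\circ v) = k(v) = w$. Thus $k(1)$ is a two-sided identity for $\circ$, and by uniqueness of the unit of $\V$ we conclude $k(1)=1$.

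For the reverse implication, suppose $k\in\Str$ with $k1=1$. First I would invoke Remark \ref{ginvertible}, which gives $k^{*}=k^{-1}\U_{k1}$; since $k1=1$ we have $\U_{k1}=\U_1=Id$, so $k^{*}=k^{-1}$ and the defining relation of the structure group becomes
$$
\U_{kx}=k\,\U_x\,k^{-1}\qquad\forall\, x\in\V.
$$
Next I would polarize this identity: as $k$ is linear, replacing $x$ by $x+y$ and using the bilinear map $\U_{x,y}=\tfrac12(\U_{x+y}-\U_x-\U_y)$ from (\ref{du}) yields $\U_{kx,ky}=k\,\U_{x,y}\,k^{-1}$. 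Then I would specialize $y=1$: from (\ref{du}) one reads off $\U_{x,1}=L_xL_1+L_1L_x-L_{x\circ 1}=L_x$, so using $k1=1$,
$$
L_{kx}=\U_{kx,1}=\U_{kx,k1}=k\,\U_{x,1}\,k^{-1}=k\,L_x\,k^{-1}\qquad\forall\, x\in\V.
$$
Finally, applying this operator identity to an element $ky$ gives $(kx)\circ(ky)=L_{kx}(ky)=kL_x(y)=k(x\circ y)$, which is exactly the multiplicativity condition, so $k\in\Aut$. The complex case runs identically: the formula $g^{*}=g^{-1}\U_{g1}$ of Remark \ref{ginvertible}, the polarization (\ref{du}) and the relation $\U_{x,1}=L_x$ are all algebraic identities valid in the unital Jordan algebra $\V^{\mathbb C}$, and $\texttt{Str}(\V^{\mathbb C})$ is a genuine structure group by Remark \ref{gcestr}, so the same four steps give the equivalence for $\texttt{Aut}(\V^{\mathbb C})$.

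The computations are routine; the only point requiring care is the passage from the quadratic relation $\U_{kx}=k\U_xk^{-1}$ to the linear relation $L_{kx}=kL_xk^{-1}$, and this is precisely where the hypothesis $k1=1$ is indispensable. It is exactly $k1=1$ that makes the polarization at $y=1$ collapse into a statement about $L$, and thereby recover multiplicativity; without it one only obtains the conjugation formula for $\U$, which does not by itself force $k$ to respect the Jordan product.
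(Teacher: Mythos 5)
Your proof is correct, and for the substantive (reverse) implication it is essentially the paper's argument, reorganized: the paper evaluates $\U_{kx}=k\U_xk^{-1}$ at the unit to get $k(x)^2=k(x^2)$ (using $k^{-1}(1)=1$) and then linearizes that quadratic identity, while you polarize the operator identity first, obtaining $\U_{kx,ky}=k\U_{x,y}k^{-1}$, and then specialize $y=1$ to get $L_{kx}=kL_xk^{-1}$. These are the same polarization computation carried out at the operator level rather than the element level, and both hinge on $k1=1$ in exactly the way you point out. The only genuine (if minor) difference is the forward implication: the paper shows $k(1)$ is an invertible idempotent --- invertible because $\U_{k1}=k\U_1k^{-1}=Id$ together with Remark \ref{invert}, idempotent because $k(1)=k(1)^2$ --- and hence equals $1$, whereas you use surjectivity of $k\in\glv$ to show $k(1)$ is a two-sided unit for $\circ$ and conclude by uniqueness of the unit. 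Your version is a bit more elementary, since it avoids the equivalence between invertibility of $x$ and of $\U_x$ and never uses $k\in\Str$ in that direction; the paper's version has the small side benefit of exhibiting $k(1)$ as invertible. Your handling of the complex case (observing the argument is purely algebraic and that $\texttt{Str}(\V^{\mathbb C})$ behaves identically) matches the paper's one-line remark.
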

\begin{proof}
	Suppose $k\in \Aut$, then $k\in \Str$ by the previous remark. From $\U_{k1}=k\U_1k^{-1}=Id$ we see that $\U_{k1}$ is invertible thus  $k(1)$ is invertible (Remark \ref{invert}). Now $k(1)=k(1^2)=k(1)^2$,  thust $k(1)=1$.

	Now, suppose $k \in \Str$, $k(1) = 1$. We have that $k^* = k^{-1}\U_{k1} = k^{-1}$. Then, $\U_{kx} = k\U_xk^{-1}$. Note that as $k(1) = 1$, $k^{-1}(1) = 1$. Then
	\begin{equation*}
		k(x)^2 = \U_{kx}(1) = k\U_xk^{-1}(1) = k(x^2).
	\end{equation*}
	Linearizing, we have that $k(x\circ y) = k(x) \circ k(y)$, and $k$ belongs to the automorphism group. The proof for the complex case is identical.
\end{proof}

The following theorem will be proven later. The ``only if'' part is nontrivial in infinite dimension and it follows from the caracterization of positive $\U_x$ obtained in Theorem \ref{upositive}:

\begin{teo}\label{caractautomorf}
	Let $g\in \Str$. Then $g^* = g^{-1}$ if and only if $g =L_{\varepsilon} k$, where $k\in \Aut$ and $\varepsilon$ is a central symmetry of $\V$.
\end{teo}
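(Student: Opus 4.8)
The plan is to prove both implications using the identity $g^*=g^{-1}\U_{g1}$ valid for every $g\in\Str$, together with Theorem \ref{upositive}; the ``if'' direction is formal once we understand how a central symmetry behaves under $*$, while the ``only if'' direction carries the analytic weight. I would first establish the auxiliary fact that \emph{if $\varepsilon$ is a central symmetry then $L_\varepsilon\in\Str$ with $L_\varepsilon^*=L_\varepsilon^{-1}=L_\varepsilon$}. Writing $\varepsilon=2p-1$ for a central projection $p$, the central idempotent $p$ splits $\V=\V_1(p)\oplus\V_0(p)$ as a direct sum of Jordan ideals (no Peirce $\tfrac12$-space appears, since $p$ is central, cf.\ \cite[Part II, Chapter 8]{mccrimmon}); on these summands $L_\varepsilon$ acts as $Id\oplus(-Id)$, while each $\U_x$ preserves both ideals and hence acts diagonally. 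From $\U_\varepsilon=Id$ (Definition \ref{cpcs}) and $\U_\varepsilon=2L_\varepsilon^2-L_{\varepsilon^2}$ one reads off $L_\varepsilon^2=Id$, and the diagonal action gives both $L_\varepsilon\U_xL_\varepsilon=\U_x$ and $\U_{\varepsilon\circ x}=\U_x$ (the cross terms $\U_{x_1,x_0}$ vanish because the ideals are orthogonal). Combining these two equalities yields $\U_{L_\varepsilon x}=L_\varepsilon\U_xL_\varepsilon$ for all $x$, which is precisely the assertion $L_\varepsilon\in\Str$ with $L_\varepsilon^*=L_\varepsilon=L_\varepsilon^{-1}$.

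For the ``if'' direction, suppose $g=L_\varepsilon k$ with $k\in\Aut$ and $\varepsilon$ a central symmetry. Then $k^*=k^{-1}$ by Remark \ref{rem:autinv}, $L_\varepsilon^*=L_\varepsilon^{-1}$ by the previous paragraph, and since $(ab)^*=b^*a^*$ on $\Str$ we obtain
\[
g^*=(L_\varepsilon k)^*=k^*L_\varepsilon^*=k^{-1}L_\varepsilon^{-1}=(L_\varepsilon k)^{-1}=g^{-1}.
\]

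For the ``only if'' direction, assume $g\in\Str$ satisfies $g^*=g^{-1}$. Since $g^*=g^{-1}\U_{g1}$ always holds, this forces $\U_{g1}=Id$, so that $\sigma(\U_{g1})=\{1\}\subset(0,+\infty)$ and Theorem \ref{upositive} applies, giving $g1=v\varepsilon$ with $v\in\Omega$ positive and $\varepsilon$ a central symmetry. Substituting back, $Id=\U_{g1}=\U_{v\varepsilon}=\U_v\U_\varepsilon=\U_v$ (as $\varepsilon$ is central, exactly as in the proof of Theorem \ref{upositive}); evaluating $\U_v=Id$ at $1$ gives $v^2=\U_v1=1$, whence $v=1$ by uniqueness of the positive square root (Remark \ref{expsq}). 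Thus $g1=\varepsilon$ is itself a central symmetry.

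It remains to extract the automorphism. Setting $k:=L_\varepsilon g$ with $\varepsilon=g1$, we have $k\in\Str$ because $L_\varepsilon\in\Str$ and $\Str$ is a group, while $k1=L_\varepsilon(g1)=\varepsilon\circ\varepsilon=\varepsilon^2=1$; by Lemma \ref{auto1} this gives $k\in\Aut$, and since $L_\varepsilon^{-1}=L_\varepsilon$ we conclude $g=L_\varepsilon^{-1}k=L_\varepsilon k$, as required. The main obstacle is the ``only if'' direction: deducing from $\U_{g1}=Id$ that $g1$ is a \emph{central} symmetry. In finite dimensions a Jordan frame handles this at once, but in the JB-setting the decisive input is Theorem \ref{upositive}, which upgrades positivity of $\sigma(\U_{g1})$ to the factorization $g1=v\varepsilon$; everything else is bookkeeping with the $*$-operation.
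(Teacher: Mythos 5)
Your proof is correct, but it takes a genuinely different route from the paper's in the ``only if'' direction. The paper proves this theorem \emph{after} Theorem \ref{stru}, and its argument simply plugs the full factorization $g=\U_xS_pk$ (with $x\in\Omega$, $p$ central, $k\in\Aut$) into $g^*=g^{-1}$, cancels, and gets $\U_x=\U_x^{-1}$, hence $x^4=1$ and $x=1$ by uniqueness of positive square roots; all the hard work is outsourced to the structure theorem, whose proof is long and passes through the complexification $\V^{\mathbb C}$. You instead bypass Theorem \ref{stru} entirely: from $g^*=g^{-1}\U_{g1}$ you read off $\U_{g1}=Id$, apply Theorem \ref{upositive} to write $g1=v\varepsilon$, kill $v$ by evaluating $\U_v=Id$ at $1$, and then extract the automorphism by hand via $k=L_\varepsilon g$, $k1=1$ and Lemma \ref{auto1} --- exactly the mechanism the paper itself uses inside the proof of Theorem \ref{stru}. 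Your re-derivation of Lemma \ref{LepsIsStr} through the ideal decomposition $\V=p\V\oplus(1-p)\V$ (on which $\U_x$ acts diagonally and $L_\varepsilon=Id\oplus(-Id)$) is also sound and arguably more conceptual than the paper's direct computation. What your approach buys is economy of means: the theorem becomes available immediately after Theorem \ref{upositive}, which matches the paper's own early remark that the nontrivial implication ``follows from the characterization of positive $\U_x$,'' and it keeps the argument entirely inside the real algebra. What the paper's approach buys is brevity at the point of proof and a clean illustration of how the structure theorem trivializes such questions once it is in place.
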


\begin{defi}[Inner Structure group]
	The Inner Structure Group of $\V$ is the set
	\begin{equation*}
		\texttt{Inn\!}\Str = \langle \U_x \rangle_{x\in \V \textrm{ invertible}}.
	\end{equation*}
$\texttt{Inn\!}\Str$ is obviously a subgroup of $\Str$. If $g\in \Str$ and $x\in \V$ invertible, we have that $	\U_{gx} = g\U_xg^* = g\U_xg^{-1}\U_{g1}$. 	Then $g\U_xg^{-1} = \U_{gx}\U_{g1}^{-1}$, which belongs to $\texttt{Inn\!}\Str$, thus $\texttt{Inn\!}\Str$ is a normal subgroup.
\end{defi}

\subsection{The Banach-Lie group $\Str$ and its Banach-Lie algebra}

We will now establish a differentiable structure for $\Str$. As $\V$ is a Banach space, we can give the operator space the supremum norm, wich makes it also a Banach space. This way, the invertible operators form a Lie group with Lie algebra $\mathfrak{gl}(\V) = \bv$. Moreover, in this norm the quadratic representation is continuous, so the condition for the structure group is a closed one. Then, the structure group is a closed subgroup of $\glv$. We will see that $\Str$ is a Lie subgroup (i.e. and embedded closed submanifold of $\glv$ with its Lie group structure). This is non-trivial if the dimension of $\V$ is infinite, since Lie's closed subgroup theorem does not hold for infinite dimensional Banach-Lie groups.

\begin{rem}\label{expo} The following facts will be useful soon: 
\begin{enumerate}
\item if $g_t$ is a smooth path in $\glv$, then from $(g_t)^{-1}g_t=Id$, differentiating with respect to $t$ it is plain that $\frac{d}{dt}(g_t)^{-1}=-(g_t)^{-1}g_t'(g_t)^{-1}$. In particular if $g_0=Id$ and $g_0'=H\in \bv$, then $(g^{-1})_{t=0}'=-H$.
\item If $G$ is a Banach-Lie group, then the exponential map of $G$ is a local diffeomorphism around $0\in \lie(G)$, the Banach-Lie algebra of $G$, and moreover $D_0(\exp)=Id_{\lie(G)}$.
\item If $H\subset G$ is a Banach-Lie subgroup of $G$, we define $\lie(H)=T_1H\subset T_1G=\lie(G)$. Then $\lie(H)$ is a Banach-Lie subalgebra of $\lie(G)$ and 
$$
\lie(H)=\{v\in \lie(G): \exp(tv)\subset H \quad \forall\, t\in\mathbb R\}.
$$
The inclusion $\supset$ is clear. On the other hand if $v=g_0'$ with $g_t\subset H$ and $g_0=1$, then since the exponential map of $G$ is a local diffeomorphism around $1\in G$, we can lift it to a smooth path  $\Gamma\subset \lie(G)$ with $\Gamma_0=0$, hence $\gamma_t=e^{\Gamma_t}$ (note that $v=\gamma_0'=D(\exp)_0\Gamma_0'=\Gamma_0'$). Then 
$$
e^{tv}=e^{t\Gamma_0'}=\lim_n e^{n\Gamma(t/n)}=\lim_n \gamma(t/n)^n \in H
$$
since $\gamma(t/n)\in H$ for each $t,n$, and $H$ is a closed subgroup.
\end{enumerate}
\end{rem}

\begin{defi}
A subgroup  $H\subset \glv$ is an \textit{algebraic subgroup} if there exists a family of polynomials in two variables $P_i=P_i(A,B)$ with $P_i:\B(\V)\times \bv\to \B(\V)$ such that $P_i(h,h^{-1})=0$ for each of the polynomials $P_i$ and every $h\in H$. The algebraic subgroup Theorem  \cite[Theorem 4.13]{beltita} states that any algebraic subgroup of $\glv$ is in fact a Banach-Lie subgroup (closed, embedded and with complemented Lie algebra).
\end{defi}

\begin{teo}
	$\Str$ is an algebraic subgroup of $\glv$. Then $\Str$ is an embedded Lie group, and if $\U = \{X \in \bv: \|X\| < \frac{\pi}{3}\}$, then 
	$$
	\exp(\U \cap \lie(\Str)) = \exp(\U) \cap \Str.
	$$ 
	Moreover  $\lie(\Str) = \str$, with
	\begin{equation*}
		\str = \{H \in \bv \text{ such that } \exists\, \overline{H}\in \bv: 2\U_{x,Hx} = H\U_x - \U_x\overline{H}\quad \forall\, x\in \V \}.
	\end{equation*}
\end{teo}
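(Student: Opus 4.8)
The plan is to realize $\Str$ as the common zero set of an explicit family of \emph{degree-three} operator polynomials, deduce the manifold structure from the algebraic subgroup theorem, and then pin down the Lie algebra by one differentiation and one integration of the defining relation.

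\textbf{Algebraic subgroup.} Starting from Lemma \ref{esalge}, for each $x\in\V$ I would set
$$P_x(A,B) = B\,\U_{Ax} - \U_x\,B\,\U_{A1}, \qquad Q_x(A,B) = A\,\U_{Bx} - \U_x\,A\,\U_{B1}.$$
Since $A\mapsto Ax$ and $A\mapsto A1$ are linear and $\U_{(\cdot)}$ is quadratic, the maps $A\mapsto\U_{Ax}$ and $A\mapsto\U_{A1}$ are homogeneous of degree $2$, so $P_x,Q_x\colon\bv\times\bv\to\bv$ have total degree $3$. Left-multiplying by $g$ (resp.\ $g^{-1}$) shows that $P_x(g,g^{-1})=0$ is the relation $\U_{gx}=g\U_xg^{-1}\U_{g1}$ and $Q_x(g,g^{-1})=0$ is $\U_{g^{-1}x}=g^{-1}\U_xg\U_{g^{-1}1}$. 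By Lemma \ref{esalge} the common zero set of $\{P_x,Q_x:x\in\V\}$ is exactly $\Str$, so $\Str$ is an algebraic subgroup of $\glv$, and Beltita's theorem \cite[Theorem 4.13]{beltita} gives that it is an embedded Banach-Lie subgroup with complemented Lie algebra.

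\textbf{The Lie algebra, inclusion $\lie(\Str)\subseteq\str$.} Recall $\lie(\Str)=\{H:e^{tH}\in\Str\ \forall t\}$ (Remark \ref{expo}). Given $H\in\lie(\Str)$, take the adjoints $(e^{tH})^*=e^{-tH}\U_{e^{tH}1}$ and differentiate $\U_{e^{tH}x}=e^{tH}\U_x(e^{tH})^*$ at $t=0$. Since the differential of $v\mapsto\U_v$ at $x$ in direction $w$ is $2\U_{x,w}$, the left side yields $2\U_{x,Hx}$ and the right side yields $H\U_x-\U_x\overline{H}$ with $\overline{H}=-\tfrac{d}{dt}\big|_{0}(e^{tH})^*$, so $H\in\str$.

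\textbf{The Lie algebra, inclusion $\str\subseteq\lie(\Str)$.} Conversely let $H\in\str$ with witness $\overline{H}$, so $2\U_{y,Hy}=H\U_y-\U_y\overline{H}$ for all $y\in\V$. Fix $x$ and compare $\Phi(t)=\U_{e^{tH}x}$ with $\Psi(t)=e^{tH}\U_x e^{-t\overline{H}}$, both $\bv$-valued. Differentiating $\Phi$ and applying the $\str$-relation at $y=e^{tH}x$ gives $\Phi'(t)=H\Phi(t)-\Phi(t)\overline{H}$, and directly $\Psi'(t)=H\Psi(t)-\Psi(t)\overline{H}$; as $\Phi(0)=\Psi(0)=\U_x$ and $\Xi\mapsto H\Xi-\Xi\overline{H}$ is a bounded operator on $\bv$, uniqueness for this linear ODE forces $\Phi=\Psi$. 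Hence $\U_{e^{tH}x}=e^{tH}\U_x e^{-t\overline{H}}$ for all $t,x$, so $e^{tH}\in\Str$ with $(e^{tH})^*=e^{-t\overline{H}}$, giving $H\in\lie(\Str)$ and therefore $\str=\lie(\Str)$.

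\textbf{The exponential identity, and the main obstacle.} The inclusion $\exp(\U\cap\lie(\Str))\subseteq\exp(\U)\cap\Str$ is immediate. For the reverse, given $g=e^X\in\Str$ with $\|X\|<\pi/3$, I would pass to the complexification (using Remark \ref{gcestr}) and, for each $\ell\in\B(\V^{\mathbb C})^*$, consider the entire function $\psi(z)=\ell\big(P_x(e^{z\mathbb X},e^{-z\mathbb X})\big)$, where $\mathbb X$ is the complexification of $X$. Because $P_x$ has degree $3$ and $\mathbb U$ is a continuous quadratic map ($\|\mathbb U_v\|\le C\|v\|^2$), one estimates $|\psi(z)|\le C'e^{3\|X\|\,|z|}$, so $\psi$ has exponential type $3\|X\|<\pi$; moreover $\psi(n)=0$ for every integer $n\ge0$ since $g^n\in\Str$. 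Carlson's theorem then forces $\psi\equiv0$, and likewise for $Q_x$, so by Hahn-Banach and Lemma \ref{esalge} one gets $e^{tX}\in\Str$ for all real $t$, i.e.\ $X\in\lie(\Str)$. This analytic step is the crux: the exponential type of $\psi$ is exactly (the degree $3$)$\times\|X\|$, and Carlson needs type strictly below $\pi$, which is precisely what produces the threshold $\pi/3$; once the degree-three polynomials of the first paragraph are available, the algebraic and differential parts are routine.
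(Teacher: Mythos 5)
Your proposal is correct, and its skeleton matches the paper's: realize $\Str$ as the zero set of a family of polynomials via Lemma \ref{esalge} and invoke the algebraic subgroup theorem of \cite[Theorem 4.13]{beltita}; obtain $\lie(\Str)\subseteq\str$ by differentiating $\U_{g_tx}=g_t\U_xg_t^*$ at $t=0$; obtain the converse by an ODE-uniqueness comparison. There are, however, two genuine points of divergence, both worth noting. First, your polynomials $P_x(A,B)=B\U_{Ax}-\U_xB\U_{A1}$ and $Q_x(A,B)=A\U_{Bx}-\U_xA\U_{B1}$ are jointly homogeneous of degree $3$, whereas the paper's $P_x(A,B)=\U_{Ax}-A\U_xB\U_{A1}$ contains the monomial $A\U_xB\U_{A1}$, which is of joint degree $4$ ($A$ enters three times, $B$ once); since the radius in the algebraic subgroup theorem is $\pi/(\mathrm{degree})$, your left-multiplied version is the one that actually delivers the stated $\pi/3$ neighbourhood identity, and it quietly repairs a slip in the paper's degree count. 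Second, where the paper simply cites \cite[Theorem 4.13]{beltita} for the exponential identity, you re-derive it via Carlson's theorem; this is indeed the mechanism behind the $\pi/n$ threshold in the Harris--Kaup theorem, so your argument is sound in outline, but there is one technical caveat: you need the complexified operator $\mathbb X$ to satisfy $\|\mathbb X\|\le\|X\|$ for the exponential type to be $3\|X\|<\pi$. This holds if you complexify the Banach algebra $\bv$ with a norm such as Taylor's (where real elements and left multiplication by them keep their norms) and extend $P_x,Q_x$ complex-multilinearly, but it is not automatic for the operator norm on $\B(\V^{\mathbb C})$ coming from the JB$^*$-norm, where a priori one only gets $\|\mathbb X\|\le 2\|X\|$; as stated, your appeal to Remark \ref{gcestr} leaves this point open. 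Finally, your comparison function $\Psi(t)=e^{tH}\U_xe^{-t\overline H}$ is simpler than the paper's $e^{tH}\U_xe^{-tH}\U_{e^{tH}1}$ and yields the clean byproduct $(e^{tH})^*=e^{-t\overline H}$; both computations are valid.
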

\begin{proof}
 Define for each $x\in \V$ and $A,B\in \bv$ the polynomials
	\begin{equation*}
		P_x(A,B) = \U_{Ax} - A \U_x B \U_{A1} \qquad		Q_x(A,B) = \U_{Bx} - B \U_x A \U_{B1}.
	\end{equation*}
By Lemma \ref{esalge} we know that $g\in \glv$ belongs to the structure group if and only if $P_x(g,g^{-1}) = 0=Q_x(g,g^{-1})$ for all $x\in \V$, so  $\Str$ is an algebraic sugroup of $\glv$. Then, it is an embedded Lie group and the assertion on the neighbourhoods follows from the fact that the degree of the polynomials is 3  (see \cite[Theorem 4.13]{beltita}). Let $g_t$ be a smooth path in the structure group $\Str$, with $g_0 = Id$, $g_0' = H\in \lie(\Str)$. Let's compute the differential of $g^*$. As $g^* = g^{-1}\U_{g1}$,
	\begin{equation*}
	\begin{aligned}
		({g^*})'_0 &= (g^{-1})_0' \U_{g_0 1} + g^{-1}_0 (\U_{g1})_0' = -H + D_{g_0 1}(\U_{-})g_0'1 \\
		&=-H + 2 \U_{-,g_0 1}H1 = -H + 2\U_{H1,1}
	\end{aligned}
	\end{equation*}
 by equation (\ref{du}). 	As $g_t$ is in the structure group for every $t$, we have that $\U_{g_t x} = g_t\U_xg_t^*$. Differentiating this equality, we obtain
	\begin{equation*}
		\frac{d}{dt}(\U_{g_tx})|_{t=0} = D_{g_0 x}(\U_-)g_0'x = 2\U_{x,Hx}
	\end{equation*}
	on the left side and $g_0' \U_xg^*_0 + g_0 \U_x (g^*)'_0 = H\U_x + \U_x(-H+ 2\U_{H1,1})$	on the right side. Then if we take $\overline{H} = H -2\U_{H1,1}$ it follows that $2\U_{x,Hx} = H\U_x - \U_x\overline{H}$, which shows the inclusion $\subset$. Now, assume that  $H\in \bv$ is in the right-hand set, by the previous remark it suffices to show that $e^{tH}$ belongs to the structure group for all $t\in\R$. Consider $f_t = \U_{e^{tH}x}$ and $g_t = e^{tH}\U_x(e^{tH})^* = e^{tH}\U_x e^{-tH}\U_{e^{tH}1}$. We want to see that both functions are equal.  First, notice that $f_0 = \U_x = g_0$. Moreover, we have
	\begin{equation*}
	\begin{aligned}
		f_t' &= D_{e^{tH}x}(\U_-)e^{tH}Hx = 2 \U_{e^{tH},e^{tH}Hx} = 2 \U_{e^{tH},He^{tH}x} = H\U_{e^{tH}x} - \U_{e^{tH}x}\overline{H} \\
		&=Hf_t - f_t\overline{H},
	\end{aligned}
	\end{equation*}
	and
	\begin{equation*}
	\begin{aligned}
		g_t' &= e^{tH}H\U_xe^{-tH}\U_{e^{tH}1} - e^{tH}\U_xe^{-tH}H\U_{e^{tH}1} + e^{tH}\U_xe^{-tH}D_{e^{tH}1}(\U_-)e^{tH}H1 \\
		&= e^{tH}H\U_xe^{-tH}\U_{e^{tH}1} - e^{tH}\U_xe^{-tH}H\U_{e^{tH}1} + 2e^{tH}\U_xe^{-tH}\U_{e^{tH}1,He^{tH}1} \\
		&= e^{tH}H\U_xe^{-tH}\U_{e^{tH}1} - e^{tH}\U_xe^{-tH}H\U_{e^{tH}1} + e^{tH}\U_xe^{-tH}(H\U_{e^{tH}1} - \U_{e^{tH}1}\overline{H}) \\
		&=He^{tH}\U_xe^{-tH}\U_{e^{tH}1} - e^{tH}\U_xe^{-tH}\U_{e^{tH}1} \overline{H} \\
		&=Hg_t - g_t \overline{H}.
	\end{aligned}
	\end{equation*}
	Then, by the uniqueness of solutions of ordinary differential equations in the Banach space $\bv$, $f_t= g_t$ for all $t\in \mathbb R$.
\end{proof}

From the previous proof we know that if $H\in \str$ then $\overline{H}=H-2\U_{H1,1}$. We will now recall how $\str$ is the direct sum of two distinct subspaces.

\begin{rem}[$L$ operators] By Lemma \ref{expu}, the exponential of the left-multiplication operators are $\U$-operators. Moreover, as for every $t$ and every $v\in \V$ we have that $e^{tL_v}\in  \Str$, the left-multiplication operators $L_v$ belong to $\str$. The latter is a Banach subspace of $\bv$, which we denote  $\mathbb{L} = \{L_v, v \in \V\}\subset \str$.
\end{rem}

\begin{rem}[Derivations] Let $\der$ be the subspace of \textit{derivations} i.e. $D\in \bv$ such that $D(x\circ y)=Dx\circ y+x\circ Dy$ for all $x,y\in \V$. It is plain that $\der\subset \bv$ is a Banach-Lie subalgebra.
\end{rem}

\begin{teo}\label{autisbanachlie}
	$\Aut\subset \Str$ is an algebraic subgroup of $\glv$, in particular it is an embedded Banach-Lie subgroup, and if $\U = \{X \in \bv: \|X\| < \frac{\pi}{2}\}$, then 
	$$
	\exp(\U \cap \lie(\Aut)) = \exp(\U) \cap \Aut.
	$$ 
Moreover for its Banach-Lie algebra $\aut=\lie(\Aut)\subset \str$ we have $\aut=\der$.
\end{teo}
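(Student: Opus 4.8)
The plan is to mirror the argument already used for $\Str$, but to set it up with a \emph{lower-degree} polynomial system, so that the sharp radius $\pi/2$ (rather than $\pi/3$) comes out of the algebraic subgroup theorem. The first step is to rewrite the defining condition of $\Aut$ in terms of the left-multiplication operators. Since $k(x\circ y)=(kx)\circ(ky)$ holding for \emph{all} $y$ is precisely the operator identity $kL_x=L_{kx}k$, I would establish
$$
\Aut=\{k\in\glv:\ kL_x=L_{kx}k \ \text{ for all } x\in\V\}.
$$
One inclusion is the computation already recorded in Remark \ref{rem:autinv}; the reverse inclusion is immediate upon applying the operator identity to an arbitrary $y\in\V$ and reading off multiplicativity. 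Note that no auxiliary normalization such as $k1=1$ is needed here, because $kL_x=L_{kx}k$ already encodes the Jordan homomorphism property on all of $\V$.

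The second step is to exhibit the polynomials. For each fixed $x\in\V$ I would set
$$
P_x(A,B)=AL_x-L_{Ax}A,\qquad A,B\in\bv.
$$
Because $A\mapsto Ax$ and $v\mapsto L_v$ are linear, the map $A\mapsto L_{Ax}$ is linear, so $P_x$ is a polynomial of degree $2$ in the pair $(A,B)$ (it does not even involve $B$), and the family $\{P_x\}_{x\in\V}$ cuts out $\Aut$ via $P_x(k,k^{-1})=0$. Hence $\Aut$ is an algebraic subgroup of $\glv$, and \cite[Theorem 4.13]{beltita} gives at once that it is an embedded Banach-Lie subgroup; since the degree of the defining polynomials is $2$, the same mechanism that produced $\pi/3$ from degree $3$ for $\Str$ now produces the neighbourhood identity with $\U=\{X\in\bv:\|X\|<\pi/2\}$.

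The third step identifies the Banach-Lie algebra $\aut=\der$ by a double inclusion, using the description $\aut=\{H\in\bv:\ e^{tH}\in\Aut\ \forall t\in\R\}$ from Remark \ref{expo}. For $\aut\subseteq\der$: if $e^{tH}\in\Aut$, then differentiating $e^{tH}(x\circ y)=(e^{tH}x)\circ(e^{tH}y)$ at $t=0$ yields $H(x\circ y)=Hx\circ y+x\circ Hy$, so $H$ is a derivation. For $\der\subseteq\aut$: given $D\in\der$, the two curves $t\mapsto e^{tD}(x\circ y)$ and $t\mapsto (e^{tD}x)\circ(e^{tD}y)$ both solve the linear ODE $\Phi'=D\Phi$ in $\V$ with common initial value $x\circ y$ (the second one via the Leibniz rule for $D$), so uniqueness of solutions forces them to agree; thus $e^{tD}\in\Aut$ for every $t$ and $D\in\aut$. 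Finally $\aut\subseteq\str$ is automatic, since $\Aut\subseteq\Str$ forces $e^{tH}\in\Str$ for all $t$ whenever $H\in\aut$.

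I expect the only genuinely delicate point to be producing the degree-$2$ polynomial description that yields the sharp $\pi/2$ constant: the cruder description $\Aut=\Str\cap\{k:\ k1=1\}$ coming from Lemma \ref{auto1} would inherit the higher degree of the $\Str$ polynomials and weaken the radius. Everything else — the two ODE/uniqueness arguments and the invocation of the algebraic subgroup theorem — is routine once the $L$-operator reformulation is in place.
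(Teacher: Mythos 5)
Your proposal is correct and takes essentially the same approach as the paper: the paper cuts out $\Aut$ with the degree-2 polynomials $P_{x,y}(A) = A(x \circ y) - A(x) \circ A(y)$ (your operator identity $AL_x - L_{Ax}A = 0$ is just this same family packaged over all $y$), invokes the same algebraic subgroup theorem to get the $\pi/2$ radius, and identifies $\aut = \der$ by the same differentiation-plus-ODE-uniqueness argument. Your one overstatement is that the $L$-operator reformulation is needed to achieve degree 2 — the paper's pointwise polynomials already have degree 2, so no "delicate" repackaging is required.
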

\begin{proof}
Define for each $x,y\in \V$ the polynomial $P_{x,y}(A) = A(x \circ y) - A(x) \circ A(y)$. The group of automorphisms is the intersecction of the zeros of every $P_{x,y}$. Then, it is an algebraic group and therefore an embedded Lie group: this fact and the assertion of the neighbourhoods follows from the algebraic subgroup theorem \cite[Theorem 4.13]{beltita}), noting that the polynomials have degree 2. Since $\Aut\subset \Str$ it is plain that $\aut=\lie(\Aut)$ is a Banach-Lie subalgebra of $\str$. Now let $\gamma$ be a path of automorphisms, $\gamma_0 = Id$, $\gamma_0'= D$. As $\gamma_t$ is an automorphism for every $t$, we have that $\gamma_t (x\circ y) = \gamma_t x \circ \gamma_t y$. Differentiating this, we obtain
	\begin{equation*}
		\gamma_t'(x\circ y) = \gamma_t'x \circ \gamma_t y + \gamma_t x \circ \gamma_t'y,
	\end{equation*}
	and for $t=0$ we have $D(x\circ y) = Dx \circ y + x \circ Dy$. 	So, $D$ is a derivation and we have $\aut\subset \der$. Now take $D$ a derivation and define $f_t = (e^{tD}x)\circ(e^{tD}y)$. We have that $f_0 = x \circ y$. Moreover,
	\begin{equation*}
		f_t' = e^{tD}Dx \circ e^{tD}y + e^{tD}x \circ e^{tD}Dy = (De^{tD}x) \circ e^{tD}y + e^{tD}x \circ (De^{tD}y) = Df_t.
	\end{equation*}
	Then $f_t = e^{tD}(x\circ y)$ by the uniqueness of solutions of ordinary differential equations, and $e^{tD}$ is an automorphism for every $t$. Then by Remark \ref{expo}, $D$ belongs to the Lie algebra $\aut$. This proves that $\aut = \der$.
\end{proof}

\begin{rem}If we let $\Aut_0$ be the component of the identity, we have that $\Aut_0\subset \Aut$ is open and it is generated by exponentials of derivations,  $\Aut_0=\langle e^D\rangle_{D\in \der}$. With the previous theorem it is possible to give a different characterization of the derivations in $\str$:  as the group of automorphisms is a Lie subgroup of the structure group, $\der$ is a Lie subalgebra of $\str$. Moreover, $D(1) = D(1\circ1)= 2D(1)$, so $D(1) = 0$. Now, if $D$ belongs to $\str$ and $D1 = 0$,
	\begin{equation*}
		e^{tD}1 = 1 + tD1 + \frac{t^2}{2}D^21 + \cdots = 1.
	\end{equation*}
	Then, for every $t$, $e^{tD}$ is a transformation in the structure group that sends $1$ to $1$, so it is an automorphism. This, by Remark \ref{expo}, implies that $D$ is a derivation, thus  
	$$
	\der=\{D\in\str : D1=0\}.
	$$
	We have seen that $\der$ and $\mathbb{L}$ are contained in $\str$, then their sum is as well. Take $X$ in $\str$ and $u = X1$. Consider $Y = X - L_u$, then $Y$ belongs to $\str$ and $Y1 = X1 - u = 0$, so $Y$ is a derivation. 	Now suppose $X$ belongs to $\mathbb{L}$ and is a derivation. Then, $X1 = L_u1 = u$ for some $u$, but as $X$ is a derivation, $X1= 0$. Then $u= 0$ and this shows that $\str=\der \oplus\, \mathbb{L}$.
\end{rem}


\subsection{The group $\GO$ preserving the cone $\Omega$}

One can make $\Str$ act on the cone of squares $\Omega$. But if $x\in\Omega$ and $g$ belongs to $\Str$, although $g(x)$ is invertible, it is not necessarily true that $g(x)$ belongs to $\Omega$, in fact

\begin{lema}\label{nointerseca}
Let $g\in \Str$. Then $g(\Omega)$ and $\Omega$ are equal or do not intersect. 
\end{lema}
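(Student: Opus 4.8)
The plan is to exhibit $\Omega$ as a connected component of the open set
$$
G=\{x\in\V:\ x\ \text{is invertible}\},
$$
and then to observe that every $g\in\Str$ acts on $G$ as a homeomorphism. Since a homeomorphism carries connected components onto connected components, $g(\Omega)$ will again be a component of $G$; and two distinct components of a topological space are disjoint, which is exactly the dichotomy asserted.

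First I would check that $g$ restricts to a homeomorphism of $G$ onto itself. By Remark \ref{ginvertible}, $g$ preserves invertibility, so $g(G)\subset G$; applying the same observation to $g^{-1}\in\Str$ gives $g^{-1}(G)\subset G$, whence $g(G)=G$. As $g$ is a bounded invertible operator with bounded inverse, it is a homeomorphism of $\V$, and hence of the relatively open subset $G$ onto itself.

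The heart of the argument is that $\Omega$ is a connected component of $G$. It is open in $\V$, hence relatively open in $G$, and it is connected, being convex (a convex cone is in particular a convex set). The key point is that $\Omega$ is also relatively \emph{closed} in $G$: since $\overline{\Omega}$ is precisely the set of elements with nonnegative spectrum (Remark \ref{expsq}), any element of $\overline{\Omega}\cap G$ has spectrum contained in $[0,+\infty)$ and avoiding $0$, i.e.\ strictly positive spectrum, so it lies in $\Omega$; thus $\overline{\Omega}\cap G=\Omega$. Hence $\Omega$ is clopen and connected in $G$, and therefore a single connected component. Applying the homeomorphism $g$ (and using that $g^{-1}$ is continuous on $G$, so that $g(\Omega)=(g^{-1})^{-1}(\Omega)$ is closed) shows that $g(\Omega)$ is again open, closed and connected in $G$, hence once more a connected component of $G$. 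Two connected components of $G$ either coincide or are disjoint, which proves the lemma.

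The only delicate step is the relative closedness of $\Omega$ in $G$; everything else is formal. This in turn reduces to the purely spectral identity $\overline{\Omega}\cap G=\Omega$, coming from the description of $\overline{\Omega}$ as the elements with nonnegative spectrum and of $\Omega$ as those with strictly positive spectrum. I expect no further obstacles.
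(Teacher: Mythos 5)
Your proposal is correct and is essentially the paper's argument in different packaging: the paper shows $A_g=g^{-1}(\Omega)\cap\Omega$ is clopen in the connected (convex) set $\Omega$, while you show $\Omega$ is a clopen connected subset, hence a connected component, of the set of invertibles and that $g$ permutes such components --- in both cases the crux is the identical spectral fact that an invertible element of $\overline{\Omega}$ has strictly positive spectrum and therefore lies in $\Omega$. There are no gaps.
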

\begin{proof}
Define the set $A_g = \{x \in \Omega: g(x) \in \Omega\}$. As $A_g = g^{-1}(\Omega) \cap \Omega$, we have that $A_g$ is an open set in $\Omega$. Moreover, it is also closed in $\Omega$: let $\{x_n\}\subset A_g$ such that $x_n$ converges to $x$ in $\Omega$. Then, $g(x_n)$ converges to $g(x)$, and as $g(x_n)$ belongs to $\Omega$ for all $n$, we have that $g(x)$ belongs to $\overline{\Omega}$. But as $g$ belongs to the structure group, $g(x)$ is invertible, so $g(x)$ belongs to $\Omega$ thus $x\in A_g$. As $A_g$ is open and closed in the convex set $\Omega$, it is either empty or $\Omega$, which proves the lemma.
\end{proof}


We now recall that $\GO$ is defined as the subgroup of all invertible transformations of $\V$ preserving the cone; we will see later that it is a subgroup of $\Str$.

\begin{defi}
Let $\GO$ be the set of isomorphisms of $\V$ that preserve the cone:
$$
\GO=\{g\in \glv: g(\Omega)=\Omega\}.
$$
\end{defi}

\begin{rem}[$\texttt{Inn\!}\Str$ and $\Aut$ are subgroups of $\GO$]\label{autinng}
If $x\in \V$ and $y\in\Omega$,  then $\U_x(y)\in {\overline{\Omega}}$, a proof can be found in \cite[Proposition 3.3.6]{stormer}. But as $x$ and $y$ invertible implies $\U_xy$ invertible (Remark \ref{invert}) we have that $\U_xy \in \Omega$. Thus $\texttt{Inn\!}\Str\subset\GO$. Moreover, we have that if $g\in \Aut$, then $g(x^2) = g(x)^2$, so $g$ belongs to $\GO$ because $\Omega=\{v^2: v\in \V\,\textrm{ invertible}\}$ (Remark \ref{expsq}), thus $\Aut\subset \GO$ also.
\end{rem}

It follows from the characterization of isometries that every automorphism is an isometry, we recall these well-known facts here:

\begin{prop}Let $\Omega\subset\V$ be its positive cone. Then
\begin{enumerate}
\item Every linear transformation $T$ that maps $\Omega$ to itself is continuous and $\|T\| = \|T(1)\|$. 
\item If $g$ belongs to $\GO$, then $g$ is an isometry if and only if $g(1) = 1$.
\item 	Every automorphism is an isometry, and every isometry in $\GO$ is an automorphism.
\end{enumerate}
\end{prop}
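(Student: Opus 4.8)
The plan is to treat the three items in order, bootstrapping each from the previous one. For (1) I would first recall that on a JB-algebra the order-unit norm induced by $1$ coincides with the given norm, and that $\|1\|=1$ (since $\|1\|=\|1\|^2$). The engine is the elementary inclusion $\overline{\Omega}+\Omega\subset\Omega$, which follows from self-duality of $\Omega$: if $a\ge 0$ and $b>0$ then $\varphi(a+b)\ge\varphi(b)>0$ for every $\varphi\in\Omega^*$. Now take $x$ with $\|x\|\le 1$; then $-1\le x\le 1$, so $1\pm x\in\overline{\Omega}$, hence $(1+\epsilon)1\pm x\in\Omega$ for each $\epsilon>0$. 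Applying $T$ (which sends $\Omega$ into $\Omega\subset\overline{\Omega}$) gives $-(1+\epsilon)T(1)\le T(x)\le (1+\epsilon)T(1)$. Since $w:=(1+\epsilon)T(1)\ge 0$ satisfies $w\le\|w\|\,1$, we get $-\|w\|1\le T(x)\le\|w\|1$, whence $\|T(x)\|\le (1+\epsilon)\|T(1)\|$; letting $\epsilon\to 0$ and taking the supremum over the unit ball yields $\|T\|\le\|T(1)\|$, while $\|T(1)\|\le\|T\|$ is trivial. In particular $T$ is continuous.

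For (2) the forward implication is immediate from (1): if $g(1)=1$ then also $g^{-1}(1)=1$, so $\|g\|=\|g^{-1}\|=1$ by (1), and then $\|g(x)\|\le\|x\|=\|g^{-1}g(x)\|\le\|g(x)\|$ forces equality. For the converse, suppose $g\in\GO$ is an isometry; then $g^{-1}$ is too, and both lie in $\GO$. Since $1\in\Omega$, the element $y:=g(1)$ is positive with $\|y\|=1$; as $\|y\|$ equals the spectral radius (via the isometric identification $\mathcal C(y)\cong C(\sigma(y))$ of Remark~\ref{expsq}), we get $\sigma(y)\subset(0,1]$, i.e. $y\le 1$, and likewise $g^{-1}(1)\le 1$. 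Because $g$ is a continuous bijection (both $g,g^{-1}$ are continuous by (1)) with $g(\Omega)=\Omega$, it preserves $\overline{\Omega}$ and hence the order; applying $g$ to $g^{-1}(1)\le 1$ gives $1\le g(1)=y$. Thus $1\le y\le 1$, and properness of the cone ($\overline{\Omega}\cap(-\overline{\Omega})=\{0\}$) forces $y=1$.

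Finally, for (3): every $k\in\Aut$ fixes $1$ by Lemma~\ref{auto1} and lies in $\GO$ by Remark~\ref{autinng}, so it is an isometry by (2) (equivalently, $k$ preserves invertibility and fixes $1$, hence preserves the spectrum and therefore the norm). For the reverse inclusion, an isometry $g\in\GO$ fixes $1$ by (2), so it is a surjective unital linear isometry of the JB-algebra $\V$; by the classical characterization of such isometries (a surjective linear isometry of unital JB-algebras that fixes the unit is a Jordan automorphism) we conclude $g\in\Aut$. I expect this last step to be the main obstacle, since the order and positivity arguments used in (1)--(2) only recover the convex hull of the spectrum and the norm, not the full Jordan product; promoting ``unital surjective isometry'' to ``Jordan automorphism'' genuinely requires the isometry characterization (alternatively, once the inclusion $\GO\subset\Str$ is available one may instead invoke Lemma~\ref{auto1} directly, since $g\in\Str$ with $g(1)=1$ is automatically an automorphism).
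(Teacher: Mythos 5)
Your proposal is correct, and it differs from the paper mainly in how much it proves versus cites. The paper disposes of items (1) and (2) entirely by citation to \cite{chu} (Lemma 2.2 and Proposition 2.3 there), whereas you reprove them from first principles: for (1) you use that the JB-norm is the order-unit norm together with $\overline{\Omega}+\Omega\subset\Omega$ to squeeze $T(x)$ between $\pm(1+\epsilon)\|T(1)\|\,1$, and for (2) you combine (1) with the spectral fact $\|y\|=1$, $y>0$ $\Rightarrow$ $y\le 1$ (via $\mathcal C(y)\cong C(\sigma(y))$, Remark~\ref{expsq}) and order-preservation of $g$ to trap $g(1)$ between $1$ and $1$; both arguments are sound and make the proposition self-contained where the paper is not. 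For item (3) you follow exactly the paper's route: automorphisms fix $1$ and lie in $\GO$ (Remark~\ref{autinng}), hence are isometries by (2), and conversely a unital surjective isometry is a Jordan automorphism by the Wright--Youngson theorem, which is \cite[Theorem 4]{wright}, the same key external input the paper invokes; your instinct that this step cannot be replaced by the order/positivity arguments of (1)--(2) matches the paper's structure.

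One caveat on your parenthetical alternative for (3): invoking $\GO\subset\Str$ together with Lemma~\ref{auto1} would be circular at this point of the paper, because the inclusion $\GO\subset\Str$ is established in Proposition~\ref{gigualuk}, whose proof (the decomposition $g=\U_y k$ with $k$ an isometry fixing $1$) uses precisely item (3) of the present proposition. So the Wright--Youngson citation is not merely convenient but logically necessary here, and your main line of argument is the right one.
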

\begin{proof}
The first two assertions are proved in \cite{chu} (Lemma 2.2 and Proposition 2.3 respectively). For every $g\in\Aut$ we have that $g\in \GO$ and $g(1) = 1$, so $g$ is an isometry. On the other hand every surjective linear isometry between two JB-algebras that maps the identity to the identity is an automorphism by \cite[Theorem 4]{wright}, so every isometry in $\GO$ is an automorphism.
\end{proof}

Combining these results, there is a useful characterization of $\GO$ following from \cite[Theorem III.5.1]{faraut}, we include a proof for completeness:

\begin{prop}\label{gigualuk}
	Every $g$ in $\GO$ can be written as $g = \U_yk$, where $y$ belongs to $\Omega$ and $k$ is an automorphism, and $\GO\subset \Str$.
\end{prop}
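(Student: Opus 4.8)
The plan is to reduce a general $g\in\GO$ to an automorphism by peeling off a single quadratic operator, exploiting the homogeneity of the cone that is encoded in the $\U$-operators. Given $g\in\GO$, the element $y_0=g(1)$ lies in $\Omega$, since $1\in\Omega$ and $g(\Omega)=\Omega$; being a member of the open cone it has spectrum bounded away from $0$, hence is invertible, and by the continuous functional calculus of Remark \ref{expsq} it admits a positive square root $y=y_0^{1/2}\in\Omega$. The key observation is that $\U_y(1)=y^2=y_0=g(1)$, so multiplying $g$ on the left by $\U_y^{-1}$ should produce a map fixing the unit.

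First I would check that $k:=\U_y^{-1}g$ belongs to $\GO$. Since $y$ is an invertible element of $\Omega$, the operator $\U_y$ lies in $\texttt{Inn\!}\Str$, and by Remark \ref{autinng} we have $\texttt{Inn\!}\Str\subset\GO$; as $\GO$ is a group, $\U_y^{-1}\in\GO$ and hence $k=\U_y^{-1}g\in\GO$. Next I would compute $k(1)=\U_y^{-1}\big(g(1)\big)=\U_y^{-1}(y_0)=1$, using $\U_y(1)=y_0$. At this point $k$ is an element of $\GO$ fixing the unit, so by part (2) of the preceding proposition $k$ is an isometry, and by part (3) every isometry in $\GO$ is an automorphism; thus $k\in\Aut$. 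This yields the desired factorization $g=\U_y k$ with $y\in\Omega$ and $k\in\Aut$.

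Finally, the inclusion $\GO\subset\Str$ falls out of this factorization: the operator $\U_y$ lies in $\texttt{Inn\!}\Str\subset\Str$, while $k\in\Aut\subset\Str$ by Remark \ref{rem:autinv}, and since $\Str$ is a group the product $g=\U_y k$ lies in $\Str$.

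The argument itself is a short chain of identifications, so there is no long computation to grind through; the genuine content is imported from earlier results. The step I expect to carry the real weight is the passage from \emph{fixing the unit} to \emph{being an automorphism}, which rests on the isometry characterization (ultimately Wright's theorem) cited in the preceding proposition — this is exactly where the infinite-dimensional difficulty is absorbed. One should also be mindful that the existence of the positive square root and the invertibility of elements of the open cone, routine in finite dimensions, here rely on the functional calculus established in Remark \ref{expsq}.
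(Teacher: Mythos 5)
Your proof is correct and follows essentially the same route as the paper's: writing $g(1)=y^2$ with $y\in\Omega$, setting $k=\U_y^{-1}g$, verifying $k(1)=1$ and $k\in\GO$, invoking the isometry characterization to get $k\in\Aut$, and deducing $\GO\subset\Str$ from the inclusions $\texttt{Inn\!}\Str,\Aut\subset\Str$. Your write-up is in fact slightly more careful than the paper's in spelling out why $k\in\GO$ and where the group structure of $\Str$ is used.
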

\begin{proof}
	As $g$ belongs to $\GO$ and $1$ belongs to $\Omega$, $g(1)$ is positive, so $g(1) = y^2$, with $y$ positive. Let $k = \U_y^{-1}g$. We want to see that $k$ is an isometry. But
	\begin{equation*}
		k(1) = \U_y^{-1}g(1) = \U_y^{-1}(y^2) = 1.
	\end{equation*}
	As $\U_y$ and $g$ belong to $\GO$, so does $k$, and as $k(1) = 1$, $k$ is an isometry. From the last proposition, $k$ is an automorphism.  For the assertion on the inclusion, note that since $\texttt{Inn\!}\Str$ and $\Aut$ are subgroups of $\Str$, the previous results tells as that $\GO$ is a subgroup of $\Str$. 
\end{proof}

\begin{rem}[$\GO$ is a closed subgroup of $\Str$]
	If $x\in\overline{\Omega}$ then there exists $(x_n)_{n\in \mathbb{N}} \in \Omega$ such that $x_n$ converges to $x$. Then $g(x_n)$ converges to $g(x)$, who then belongs to $\overline{\Omega}$. Conversely, as for every $g \in \Str$ and invertible $x$ we have that $g(x)$ is invertible, then if $g(\overline{\Omega}) = \overline{\Omega}$ then $g \in \GO$. In summary, if $g \in \Str$, $g\in \GO$ if and only if $g(\overline{\Omega})=\overline{\Omega}$. Thus $\GO$ is a closed subgroup of $\Str$. 
\end{rem}

For $x\in \Omega$ we let $\ln(x)\in \V$ be the unique logarithm of $x$, this is a diffeomorphism $\ln:\Omega\to \V$ by Remark \ref{expsq}.

\begin{teo}\label{retract} Let $F:\GO\times [0,1]\to \GO$ be $F(g,t)=\U_{e^{-t\ln(g1)/2}}\cdot g$. Then $\Aut$ is a strong deformation retract of $\GO$ by means of $F$. In particular  $\GO$ and $\Aut$ have the same number of connected components.
\end{teo}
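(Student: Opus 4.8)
The plan is to verify directly that the prescribed $F$ is a continuous homotopy taking values in $\GO$, that it restricts to the identity at $t=0$, lands in $\Aut$ at $t=1$, and fixes $\Aut$ pointwise for all $t$; the statement about connected components is then the standard consequence of the existence of such a (strong) deformation retraction. Throughout I write $y=g(1)\in\Omega$, which is positive because $g\in\GO$ and $1\in\Omega$, and I abbreviate $y^{-t/2}:=e^{-t\ln(y)/2}$, which lies in $\Omega$ since $\Omega=e^{\V}$ (Remark \ref{expsq}); thus $F(g,t)=\U_{y^{-t/2}}\,g$.

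First I would check that $F$ is well defined as a map into $\GO$. Since $y^{-t/2}\in\Omega$ is in particular invertible, the operator $\U_{y^{-t/2}}$ belongs to the inner structure group, hence to $\GO$ (Remark \ref{autinng}); as $g\in\GO$ as well, the product $F(g,t)=\U_{y^{-t/2}}\,g$ lies in $\GO$ for every $(g,t)$. Continuity of $F$ follows by composing continuous maps: the evaluation $g\mapsto g(1)$, the diffeomorphism $\ln:\Omega\to\V$, the map $(v,t)\mapsto e^{-tv/2}$ (continuous by the functional calculus), the quadratic representation $z\mapsto\U_z$ (continuous, being quadratic in $z$), and operator multiplication in $\bv$.

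Next I would treat the boundary conditions. At $t=0$ one has $y^{0}=e^{0}=1$, so $\U_{1}=Id$ and $F(g,0)=g$. The essential computation is at $t=1$: here $F(g,1)=\U_{y^{-1/2}}\,g$, and evaluating at $1$ gives $F(g,1)(1)=\U_{y^{-1/2}}(g(1))=\U_{y^{-1/2}}(y)$. Using $\U_{y^{1/2}}(1)=(y^{1/2})^{2}=y$ together with $\U_{y^{-1/2}}=(\U_{y^{1/2}})^{-1}$ (the inverse, since $y^{-1/2}=(y^{1/2})^{-1}$ is invertible), one obtains $F(g,1)(1)=\U_{y^{-1/2}}\U_{y^{1/2}}(1)=1$. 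As $F(g,1)\in\GO\subset\Str$, Lemma \ref{auto1} then yields $F(g,1)\in\Aut$, so $r(g):=F(g,1)$ is a retraction of $\GO$ onto $\Aut$. Finally, for $k\in\Aut$ one has $k(1)=1$, hence $\ln(k(1))=\ln(1)=0$ and $\U_{e^{0}}=\U_{1}=Id$, giving $F(k,t)=k$ for every $t$; thus $F$ fixes $\Aut$ pointwise (and in particular $r|_{\Aut}=Id$).

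With these four facts $F$ is a strong deformation retraction of $\GO$ onto $\Aut$, so the inclusion $\Aut\hookrightarrow\GO$ is a homotopy equivalence and in particular induces a bijection on connected (indeed path) components, which gives the claimed equality of the number of components. The only points needing any care are the identity $\U_{y^{-1/2}}(y)=1$ and the observation that each time slice $\U_{y^{-t/2}}$ remains inside $\GO$; both reduce to the elementary properties of the quadratic representation recalled above, so I do not anticipate a genuine obstacle in the argument.
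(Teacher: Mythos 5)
Your proof is correct and follows essentially the same route as the paper: verify continuity of $F$, the boundary conditions $F(g,0)=g$ and $F(g,1)(1)=\U_{g(1)^{-1/2}}(g(1))=1$ (whence $F(g,1)\in\Aut$ by Lemma \ref{auto1}), and that $F(k,t)=k$ for $k\in\Aut$ since $k(1)=1$. Your additional checks (that each slice $\U_{e^{-t\ln(g1)/2}}\,g$ stays in $\GO$ via Remark \ref{autinng}, and the explicit inverse identity $\U_{y^{-1/2}}=(\U_{y^{1/2}})^{-1}$) only make explicit what the paper leaves implicit.
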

\begin{proof}
It is plain that $F$ is continuous since evaluation, the logarithm, and the map $x\to \U_x$ are continuous. Clearly $F(g,0)=g$, and 
$$
F(g,1)(1)=(\U_{\sqrt{g(1)}})^{-1} g(1)=1
$$ thus $F(g,1)\in\Aut$ for each $g\in \GO$. Finally if $k\in \Aut$ then $F(k,t)=k$ since $k(1)=1$.
\end{proof}

\subsubsection{The inclusion $\GO\subset\glv$ is of embedded Banach-Lie groups}

Although $\GO$ is a closed subgroup of the structure group, it does not automatically inherit a differentiable structure for infinite dimensional $\V$. To prove that $\GO$ is a submanifold, we will see that it is an open subgroup of $\Str$. To this end, we will study $\Str_0$, the connected component of the identity of $\Str$ (which is open since $\Str$ is a Lie group) and we will see that $\Str_0$  is contained in $\GO$. Thus $\Str_0\subset \GO\subset \Str$ and each inclusion is open (moreover each inclusion is closed since open subgroups of topological groups are closed).

\begin{prop}
	Every element $g\in \Str_0$ can be written as $g = \U k$, where $\U$ belongs to the inner structure group and $k\in \Aut_0$.
\end{prop}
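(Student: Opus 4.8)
The plan is to prove the slightly stronger statement that $\Str_0$ is contained in the set $S=\texttt{Inn}\Str\cdot\Aut_0=\{\U k: \U\in\texttt{Inn}\Str,\ k\in\Aut_0\}$, which immediately yields the claimed factorization. The three facts I would lean on are: first, the decomposition $\str=\der\oplus\mathbb{L}$ established above; second, that exponentials of the two summands land in the two intended factors, namely $e^{L_v}=\U_{e^{v/2}}\in\texttt{Inn}\Str$ by Lemma \ref{expu} (since $e^{v/2}\in\Omega$ is invertible) and $e^{D}\in\Aut_0$ for $D\in\der=\aut$ by Theorem \ref{autisbanachlie}; and third, that $\texttt{Inn}\Str$ is a normal subgroup of $\Str$.

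First I would check that $S$ is a subgroup of $\Str$. Normality of $\texttt{Inn}\Str$ lets me move automorphisms past inner elements: for $\U_1,\U_2\in\texttt{Inn}\Str$ and $k_1,k_2\in\Aut_0$ one has $(\U_1k_1)(\U_2k_2)=\U_1(k_1\U_2k_1^{-1})k_1k_2$, where $k_1\U_2k_1^{-1}\in\texttt{Inn}\Str$, so the product again lies in $S$; similarly $(\U k)^{-1}=(k^{-1}\U^{-1}k)k^{-1}\in S$. Thus $S$ is a subgroup, and it sits inside $\Str$ because both factors do.

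Next I would show $S$ contains a neighbourhood of the identity. Consider the smooth map $\varphi:\mathbb{L}\times\der\to\Str$, $\varphi(A,D)=e^{A}e^{D}$; its differential at $(0,0)$ is $(A,D)\mapsto A+D$, which is a Banach-space isomorphism onto $\str$ precisely because $\str=\mathbb{L}\oplus\der$. By the inverse function theorem $\varphi$ is a local diffeomorphism at $(0,0)$, so its image---which lies in $S$ by the second fact above and in $\Str_0$ since each factor is joined to the identity by $t\mapsto e^{tA}$, $t\mapsto e^{tD}$---contains an open neighbourhood of $1$ in $\Str_0$. Hence $S\cap\Str_0$ is an open subgroup of $\Str_0$; being open it is also closed, and since $\Str_0$ is connected we conclude $S\cap\Str_0=\Str_0$, i.e. $\Str_0\subset S$, which is exactly the asserted factorization.

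The main obstacle to avoid is the temptation to factor $g=e^{X}$ directly: for $X=L_v+D\in\str$ one does not have $e^{X}=e^{L_v}e^{D}$, since $\mathbb{L}$ and $\der$ need not commute, so no pointwise splitting of a single exponential is available. The device that sidesteps this is precisely the combination of normality of $\texttt{Inn}\Str$ (to keep the inner factor on the left when forming products) with the open-subgroup and connectedness argument, which upgrades the local factorization valid near $1$ to a global one on all of $\Str_0$.
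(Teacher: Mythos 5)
Your proposal is correct and takes essentially the same approach as the paper: both rely on the splitting $\str=\mathbb{L}\oplus\der$, the local diffeomorphism $\varphi(L_x+D)=e^{L_x}e^D$ giving the factorization $\U_x k$ near the identity, and connectedness of $\Str_0$ to globalize. The only difference is packaging: you run an open-closed subgroup argument for $S=\texttt{Inn\!}\Str\cdot\Aut_0$ using normality of $\texttt{Inn\!}\Str$, whereas the paper generates $\Str_0$ from the identity neighbourhood and collects factors via $\U_xk_1\U_yk_2=\U_x\U_{k_1y}k_1k_2$ --- the same underlying commutation relation.
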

\begin{proof}
	Define $\varphi: \str=\mathbb L\oplus \der \to \Str$ by  $\varphi(L_x + D) = e^{L_x}e^D$. It is obviously a smooth map. Now, let $L(t)+ D(t)$ be such that $L(0) = D(0) = 0$, $L'(0) = L_x$, $D'(0) = D$. Then,
	\begin{equation*}
		D_0\varphi(L + D) = (\varphi(L(t) + D(t)))'(0) = e^{L(0)} L e^{D(0)} + e^{L(0)}e^{D(0)}D = L + D.  
	\end{equation*}
	Then, $D_0\varphi = Id$, and $\varphi$ is a local difeomorphism around $(0,0)$. 	This tells us that in a neighbourhood of the identity in $\Str$, every element can be written as $e^{L_x}e^D$. We have seen before that the exponential of a left multiplication gives us a quadratic operator, and the exponencial of a derivation gives us an automorphism. Then, in a neighbourhood of the identity every element $g$ can be written as $g = \U_xk$, with $k$ an automorphism.  In a topological group,  a neighbourhood of the identity generates the connected component of the identity. Then, every $g$ in this connected component can be written as a multiplication of elements $\U_xk$. As $k$ is an automorphism, we have that $	\U_xk_1\U_yk_2 = \U_x\U_{k_1y}k_1k_2$, which proves the claim.
\end{proof}

\begin{coro}
	The identity component $\Str_0$ is contained in $\GO$.
\end{coro}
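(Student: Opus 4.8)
The plan is to read the result off directly from the preceding proposition, which shows that every $g\in \Str_0$ factors as $g=\U k$ with $\U$ in the inner structure group $\texttt{Inn\!}\Str$ and $k\in \Aut_0$. Thus the corollary reduces to checking that each factor of this decomposition already preserves the cone, and then invoking the fact that $\GO$ is a group.

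First I would recall from Remark \ref{autinng} that $\texttt{Inn\!}\Str\subset \GO$ (since $\U_x y\in\Omega$ whenever $x,y$ are invertible) and that $\Aut\subset \GO$; in particular $k\in \Aut_0\subset \Aut\subset \GO$ and $\U\in \GO$. Since $\GO=\{g\in \glv: g(\Omega)=\Omega\}$ is manifestly closed under composition, the product $\U k$ again maps $\Omega$ onto $\Omega$, so $g=\U k\in \GO$. As $g\in\Str_0$ was arbitrary, this yields $\Str_0\subset \GO$.

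There is essentially no obstacle at this stage: all the substantive work — the exponential chart on $\str=\mathbb L\oplus \der$ and the collecting of factors via $\U_x k_1\U_y k_2=\U_x\U_{k_1 y}k_1 k_2$ — was already carried out in the proposition. The corollary is a purely formal consequence, and the only point worth stating explicitly is that both kinds of building block appearing in the factorization, namely inner quadratic operators and automorphisms, were each previously shown to fix $\Omega$.
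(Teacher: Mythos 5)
Your proof is correct and is essentially identical to the paper's: both deduce the result from the preceding proposition's factorization $g=\U k$ together with the fact (Remark \ref{autinng}) that $\texttt{Inn\!}\Str$ and $\Aut$ are both subgroups of $\GO$, which is closed under composition. No gaps.
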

\begin{proof}
	As the inner structure group and the automorphisms group are subgroups of $\GO$, if $g\in \Str_0$ then by the previous theorem $g=\U k\in \texttt{Inn\!}\Str\cdot \Aut\subset \GO$.
\end{proof}

Then by Proposition \ref{gigualuk}, and with a similar proof than Theorem \ref{retract},  we have that in fact

\begin{coro}\label{ref:decompUAut}
	Every element $g\in \Str_0$ can be written as $g = \U_xk$, where $x\in \Omega$ and $k=e^{D_1}e^{D_1}\dots e^{D_n}\in \Aut_0$ with $D_i\in \der$.
\end{coro}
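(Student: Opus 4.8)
The plan is to refine the factorization of Proposition \ref{gigualuk} so that the automorphism factor lands in the identity component $\Aut_0$, and then to expand that factor as a finite product of exponentials of derivations. First I would fix $g\in \Str_0$ and record that, by the preceding corollary, $\Str_0\subset \GO$; since the inclusion $\Str_0\subset\GO$ is open, $\Str_0$ is precisely the identity component $\GO_0$ of $\GO$. Proposition \ref{gigualuk} then produces a factorization $g=\U_x k$ with $x=(g1)^{1/2}\in\Omega$ and $k=\U_x^{-1}g\in\Aut$; here $x\in\Omega$ because $g(1)\in\Omega$ (as $g\in\GO$) and $g(1)$ has a unique positive square root by Remark \ref{expsq}. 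This already gives the claimed shape $g=\U_x k$ with $x\in\Omega$, so the only remaining point is to promote $k$ from $\Aut$ to $\Aut_0$.

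Next I would identify $k$ with the endpoint of the deformation retract of Theorem \ref{retract}. Since $e^{-\ln(g1)/2}=(g1)^{-1/2}=x^{-1}$ and $\U_x^{-1}=\U_{x^{-1}}$ for invertible $x$, we have $k=\U_{x^{-1}}g=F(g,1)=r(g)$, where $r:=F(\cdot,1)\colon\GO\to\Aut$ is the retraction. Because $F$ is a homotopy through self-maps of $\GO$ from the identity map to $r$, and $r$ fixes $Id$ (as $Id\in\Aut$), the image $r(\GO_0)$ is a connected subset of $\Aut$ containing $Id$, whence $r(\GO_0)\subset\Aut_0$. As $g\in\GO_0=\Str_0$, this yields $k=r(g)\in\Aut_0$. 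Equivalently, $F$ exhibits a homeomorphism $\GO\cong\Omega\times\Aut$ which, because $\Omega$ is convex and therefore connected, restricts to $\GO_0\cong\Omega\times\Aut_0$.

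Finally, using the characterization $\Aut_0=\langle e^D\rangle_{D\in\der}$ from the remark following Theorem \ref{autisbanachlie}, the automorphism $k$ is a finite product $k=e^{D_1}e^{D_2}\cdots e^{D_n}$ with each $D_i\in\der$, completing the decomposition $g=\U_x k$. The main obstacle is the one genuinely new point beyond Proposition \ref{gigualuk}: upgrading the membership $k\in\Aut$ to $k\in\Aut_0$. The argument hinges on the retraction $r$ being homotopic to the identity within $\GO$ and fixing $Id$, so that it carries the connected set $\GO_0$ into the component of $\Aut$ through $Id$; once this is in hand, every remaining step is bookkeeping with results already established.
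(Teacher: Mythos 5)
Your proof is correct and follows essentially the same route the paper intends (and merely sketches in one sentence): combine Proposition \ref{gigualuk} with the retraction $F(\cdot,1)$ of Theorem \ref{retract} to place the automorphism factor in $\Aut_0$, then invoke $\Aut_0=\langle e^D\rangle_{D\in\der}$. Your explicit identification $k=\U_x^{-1}g=F(g,1)$ and the connectedness argument showing $F(\cdot,1)$ maps $\Str_0=\GO_0$ into $\Aut_0$ supply precisely the details the paper leaves to the reader.
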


\begin{teo}
$\GO$ is an embedded Lie subgroup of $\glv$ with $\lie(\GO) = \str$. We have $\GO=\bigsqcup_i  \Str_0\cdot k_i$, where each $k_i$ belongs to a different connected component of $\Aut$.
\end{teo}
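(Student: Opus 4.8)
The plan is to deduce the entire statement from the single observation that $\GO$ is an \emph{open} subgroup of $\Str$. Since $\Str$ is a Banach-Lie group, its identity component $\Str_0$ is open, and we have already established the chain $\Str_0\subset \GO\subset \Str$. A subgroup of a topological group that contains an open subgroup is itself open, being the union of the open left cosets $g\,\Str_0$ with $g\in\GO$; hence $\GO$ is open in $\Str$. An open subgroup of a Banach-Lie group carries a Banach-Lie group structure as an open submanifold and shares the tangent space at the identity, so $\lie(\GO)=\lie(\Str)=\str$. Moreover, since $\Str$ is an embedded submanifold of $\glv$ (by the algebraic subgroup theorem) and an open subset of an embedded submanifold is again embedded, the composite inclusion $\GO\hookrightarrow\Str\hookrightarrow\glv$ exhibits $\GO$ as an embedded Banach-Lie subgroup of $\glv$.

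For the decomposition I would first identify the connected components of $\GO$. Because $\Str_0$ is connected, open, and a subgroup contained in $\GO$, it is precisely the identity component of $\GO$; being an identity component it is normal, and the connected components of $\GO$ are exactly the cosets $\Str_0\cdot k$, as $k$ ranges over a set of representatives. Writing $\GO=\bigsqcup_i \Str_0\cdot k_i$ is then merely a choice of one representative per component.

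It remains to arrange the representatives $k_i$ inside $\Aut$, lying in pairwise distinct components of $\Aut$, and here I would invoke Theorem \ref{retract}. The strong deformation retract $F$ makes the inclusion $\Aut\hookrightarrow\GO$ a homotopy equivalence, with homotopy inverse the retraction $r(g)=\U_{e^{-\ln(g1)/2}}\,g$; hence it induces a bijection between the connected components of $\Aut$ and those of $\GO$. Concretely, every component of $\GO$ meets $\Aut$ because $F(g,\cdot)$ joins $g$ to $r(g)\in\Aut$ (surjectivity), while if two automorphisms $k,k'$ lie in the same component of $\GO$, applying $r$ to a joining path and using $r|_{\Aut}=\mathrm{id}$ produces a path in $\Aut$ from $k$ to $k'$ (injectivity). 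Consequently each component $\Str_0\cdot k_i$ contains exactly one component of $\Aut$, so I may take $k_i$ to be a point of that $\Aut$-component; the injectivity just noted guarantees the chosen $k_i$ lie in distinct components of $\Aut$, completing the decomposition.

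The genuinely routine parts (open subgroups are embedded Lie subgroups with the same Lie algebra, identity components are normal) I would treat by citation rather than computation. I expect the only point requiring care to be the bookkeeping of the final paragraph: translating the homotopy equivalence furnished by $F$ into the precise bijection of component sets, and verifying simultaneously that the chosen $k_i\in\Aut$ exhaust the components of $\GO$ while remaining in pairwise distinct components of $\Aut$.
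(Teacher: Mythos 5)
Your proposal is correct, and its first half---openness of $\GO$ in $\Str$ obtained from $\Str_0\subset\GO\subset\Str$, hence $\GO$ embedded in $\glv$ with $\lie(\GO)=\str$---is exactly the paper's argument. For the coset decomposition the paper proceeds differently: it picks one representative $g_i$ per connected component of $\GO$ and applies Proposition \ref{gigualuk} to write $g_i=\U_{x_i}k_i$ with $x_i\in\Omega$ and $k_i\in\Aut$; since $\U_{x_i}=e^{L_{v_i}}$ (where $x_i=e^{v_i/2}$) lies in $\Str_0$, the factor $\U_{x_i}$ is absorbed into $\Str_0$, giving $\Str_0 g_i=\Str_0 k_i$. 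You instead extract the bijection between components of $\Aut$ and of $\GO$ from the strong deformation retract of Theorem \ref{retract}. These are really the same mechanism in different clothing: the retraction at time $1$, $r(g)=\U_{\sqrt{g(1)}}^{-1}\,g$, is precisely the automorphism factor in Proposition \ref{gigualuk}. What your packaging buys is that it makes explicit two points the paper's terse ending leaves implicit: that the chosen $k_i$ lie in pairwise distinct components of $\Aut$ (in the paper this follows silently from continuity of the inclusion $\Aut\hookrightarrow\GO$: a path in $\Aut$ from $k_i$ to $k_j$ would force $\Str_0 k_i=\Str_0 k_j$), and that the $k_i$ exhaust the components of $\Aut$, so the decomposition has exactly one coset per $\Aut$-component. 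What the paper's packaging buys is directness---the representatives are produced by a one-line algebraic factorization rather than by component-set bookkeeping through a homotopy equivalence.
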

\begin{proof}
	As the connected component of the identity $\Str_0$ is contained in $\GO$ and it is open, we have that $\GO$ is the union of translations of this component, so it is also open. Then, $\GO$ is an embedded Lie subgroup of $\Str$ and therefore of $\glv$, and since $\GO$ is open in $\Str$ we have $\lie(\GO) = \str$. Finally, we have that $\GO=\bigsqcup_i  \Str_0\cdot g_i$ with disjoint copies and $g_i\in \GO$ in different connected components of $\GO$; by Proposition \ref{gigualuk}, $g_i=\U_{x_i}k_i$ so we can assimmilate $\U_{x_i}$ to the set $\Str_0$ and this finishes the proof.
\end{proof}

\begin{rem}
	It seemed  unknown (see \cite[pag. 363]{chu}) that $\GO$ as a Lie group has the norm topology of $\bv$. 
\end{rem}

\subsection{Jordan homotopes and the components of $\Str$}

We can write $\Str=\bigsqcup_j g_j \GO$ as a disjoint union of copies of $\GO$ (here each $g_j\in \Str$ does not belong to $\GO$). We want to know how many copies of $\GO$ there are in $\Str$; we know we have at least two, as $-Id$ is an element of the structure group but does not belong to $\GO$.

Let $g$ be an element of the structure group, then it is easy to see that $g(\Omega)$ is a convex cone, as $\Omega$ is one.

\begin{lema}\label{disjuntos}
	Let $g$, $h$ belong to $\Str$. Then the convex cones $g(\Omega)$ and $h(\Omega)$ are equal or do not intersect.
\end{lema}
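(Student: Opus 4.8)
The plan is to reduce the two-operator statement to the already-established single-operator version, Lemma \ref{nointerseca}, by exploiting that $\Str$ is a group and that each element of $\Str\subset\glv$ is a linear bijection of $\V$. The key observation is that the relation ``equal or disjoint'' for a pair of cones is preserved both ways under any bijection: a bijection carries equal sets to equal sets, and it carries disjoint sets to disjoint sets (by injectivity).

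Concretely, given $g,h\in\Str$, I would set $f=h^{-1}g$. Since $\Str$ is a group, $f\in\Str$, so Lemma \ref{nointerseca} applies to $f$ and yields the dichotomy that either $f(\Omega)=\Omega$ or $f(\Omega)\cap\Omega=\emptyset$. I then apply the bijection $h$ to this conclusion. Note that $h\bigl(f(\Omega)\bigr)=h\bigl(h^{-1}g(\Omega)\bigr)=g(\Omega)$, while $h$ applied to the second cone is just $h(\Omega)$.

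In the first alternative, applying $h$ to the equality $f(\Omega)=\Omega$ gives directly $g(\Omega)=h(\Omega)$. In the second alternative, since $h$ is injective it maps the disjoint sets $f(\Omega)$ and $\Omega$ to disjoint images, so $g(\Omega)\cap h(\Omega)=\emptyset$. This establishes the desired dichotomy for the pair $g(\Omega)$, $h(\Omega)$.

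There is essentially no genuine obstacle here beyond bookkeeping: the whole content has already been packed into Lemma \ref{nointerseca}, and the step that ``does the work'' is simply recognizing that $h^{-1}g\in\Str$ and that a linear isomorphism preserves both equality and disjointness of sets. The only point warranting a moment of care is the disjointness direction, where injectivity of $h$ (guaranteed by $h\in\glv$) is what is actually used.
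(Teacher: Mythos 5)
Your proposal is correct and is essentially identical to the paper's own proof: the paper likewise reduces to $f=h^{-1}g\in\Str$, invokes Lemma \ref{nointerseca}, and transfers the dichotomy back via $h$. Your version merely spells out the (correct) bookkeeping about bijections preserving equality and disjointness, which the paper leaves implicit.
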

\begin{proof}
It is enough to see that if $g$ belongs to $\Str$, then $g(\Omega)$ is either $\Omega$ or does not intersect $\Omega$, as if $g$ and $h$ belong to the structure group, so does $h^{-1} \circ g$, and comparing $h^{-1} \circ g$ with the identity gives us the general result. But this was proved in Lemma \ref{nointerseca}.
\end{proof}

\begin{rem}[For $g,h\in \Str$, we have $g(\Omega) = h(\Omega)$ if and only if the coclass $g\GO$ equals the coclass $h\GO$] This can be seen as follows: if the coclasses are equal, we have that $h^{-1}g$ belongs to $\GO$ and $h^{-1}g(\Omega) = \Omega$. Then, $g(\Omega) = h(\Omega)$. 	Conversely, if the cones $g(\Omega)$ and $h(\Omega)$ are equal, then $h^{-1}g(\Omega) = \Omega$ and $h^{-1}g$ belongs to $\GO$. Then, the coclasses are equal.
\end{rem}

Then, we have a family of cones $\{g(\Omega): g \in \Str\}$ which are either equal or do not intersect, and by the last observation we have as many copies of $\GO$ in the structure group as different cones in that family. The two obvious cones are $\Omega$ and $-\Omega$. We now give a characterization of these copies by means of central projections of $\V$. 

\begin{lema}[Central projections]\label{centrals} Let $p\in\V$ be a central projection, then $L_p=(L_p)^2=\U_p$, $L_{px}=L_pL_x=L_xL_p$ and $px^2=(px)^2=(px)x$ for any $x\in\V$.
\end{lema}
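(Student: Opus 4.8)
The plan is to prove the three assertions in the order stated, deriving each from the previous one, with the centrality hypothesis $[L_p,L_z]=0$ $(\forall z\in\V)$ as the engine throughout. For the first assertion $L_p=(L_p)^2=\U_p$, I would start from the observation that $1-p$ is again a central idempotent and that $L_{1-p}=\mathrm{Id}-L_p$ (since $L_1=\mathrm{Id}$). Because $p\circ(1-p)=p-p^2=0$, the very computation already used in the proof of Theorem \ref{upositive} gives, for every $y\in\V$,
\[
L_pL_{1-p}\,y=p\circ\big((1-p)\circ y\big)=y\circ\big(p\circ(1-p)\big)=0,
\]
where the middle equality is exactly centrality $[L_p,L_y]=0$ applied to $1-p$. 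Hence $L_p(\mathrm{Id}-L_p)=0$, that is $(L_p)^2=L_p$, and then $\U_p=2L_p^2-L_{p^2}=2L_p-L_p=L_p$ using $p^2=p$.

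For the second assertion, $L_pL_x=L_xL_p$ is simply the definition of centrality, so the genuine content is the associativity-type identity $L_{px}=L_pL_x$, i.e. $(p\circ x)\circ y=p\circ(x\circ y)$ for all $x,y$. \emph{This is the step I expect to be the main obstacle}, since it is the only one that is not purely formal and cannot be obtained by mechanical operator manipulation alone. I would settle it through the Peirce data of the complementary central idempotents: putting $V_1=\ran L_p$ and $V_0=\ker L_p=\ran L_{1-p}$, the first part shows that $L_p$ and $L_{1-p}$ are complementary orthogonal idempotent operators, so $\V=V_1\oplus V_0$ and $L_p$ is the projection onto $V_1$ along $V_0$. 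Centrality then yields, each in one line, the multiplication rules $V_1\circ V_1\subset V_1$, $V_0\circ V_0\subset V_0$ and $V_1\circ V_0=\{0\}$; for the cross term, if $u\in V_1$ and $v\in V_0$ then $p\circ(u\circ v)=u\circ(p\circ v)=0$ places $u\circ v\in V_0$, while $p\circ(u\circ v)=v\circ(p\circ u)=u\circ v$ places it in $V_1$, so $u\circ v\in V_1\cap V_0=\{0\}$. Decomposing $x$ and $y$ into their $V_1,V_0$ components and applying these rules, both $(p\circ x)\circ y$ and $p\circ(x\circ y)$ collapse to $(px)\circ(py)$, which proves $L_{px}=L_pL_x=L_xL_p$.

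The third assertion then follows formally from $L_{px}=L_pL_x=L_xL_p$ together with $L_p^2=L_p$. Indeed, on the one hand
\[
(px)\circ x=L_{px}x=L_pL_x\,x=p\circ x^2=px^2,
\]
and on the other hand, since $p\circ(px)=L_p^2 x=L_px=px$,
\[
(px)^2=L_{px}(px)=L_xL_p(px)=x\circ\big(p\circ(px)\big)=x\circ(px)=(px)\circ x .
\]
Combining the two displays gives $(px)^2=(px)x=px^2$, completing the proof. The only place where real Jordan-algebraic structure (beyond formal bookkeeping) enters is the Peirce argument of the second paragraph; everything else is a direct consequence of centrality and the idempotent relation $L_p^2=L_p$.
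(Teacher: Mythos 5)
Your proof is correct, but it takes a genuinely different route from the paper's. The paper obtains $L_p=\U_p$ from Youngson's characterization of central elements \cite[Theorem 5]{youngson2}, gets $L_p^2=L_p$ via the fundamental formula ($\U_p^2=\U_{p^2}$), and then proves the key identity by computing $\U_{px}=\U_{\U_px}=\U_p\U_x\U_p=L_p\U_x$, evaluating at $1$, polarizing to conclude that $L_p$ is a Jordan morphism (i.e. $L_{px}L_p=L_pL_x$), and finally untangling this into $L_{px}=L_pL_x$ by adding the analogous identity for $q=1-p$ and cancelling. You instead argue entirely from the centrality hypothesis: $L_p^2=L_p$ follows from $L_pL_y(1-p)=L_yL_p(1-p)=0$, and the identity $L_{px}=L_pL_x$ is proved through the eigenspace decomposition $\V=\ran L_p\oplus\ker L_p$ together with the multiplication rules $V_1\circ V_1\subset V_1$, $V_0\circ V_0\subset V_0$, $V_1\circ V_0=\{0\}$, each of which is a one-line consequence of centrality; the third assertion is then handled essentially as in the paper. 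What your approach buys: it is elementary and self-contained, valid in any Jordan algebra, with no appeal to Youngson's theorem, the fundamental formula, or the Banach structure, and it exposes the structural reason behind the lemma, namely that a central idempotent has trivial Peirce half-space. What the paper's approach buys: it produces along the way the operator identity $\U_{px}=L_p\U_x$ and the polarized identity $(px)\circ(py)=p\circ(x\circ y)$, which mesh directly with the quadratic-representation machinery used throughout Section \ref{s3}; your decomposition argument recovers these facts implicitly (both sides of the key identity collapse to $(px)\circ(py)$) but does not isolate them as statements.
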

\begin{proof}
By \cite[Theorem 5]{youngson2}, we have $L_p=L_{p^2}=\U_p$, and from it follows that  $L_p^2=\U_p^2=\U_{p^2}=\U_p=L_p$. Now $\U_{px}=\U_{L_px}=\U_{\U_px}=\U_p\U_x\U_p=\U_p^2\U_x=\U_p\U_x=L_p\U_x$ by the fundamental formula and the fact that $L_p$ commutes with $L_x,L_{x^2}$. If we apply this identity to $v=1$, we get $(px)\circ (px)=p\circ x^2$. Polarizing this identity, it follows that $(px)(py)=p(xy)$ for any $x,y\in \V$. Note that this tells us $L_p$ is a Jordan morphism. Hence $L_{px}L_py=L_pL_xy$.

As $q=1-p$ is also a central projection, last equality also holds for $q$, $L_{qx}L_qy=L_qL_xy$. Adding these two identities, we get
$$
L_x=L_{px}L_p+L_x-L_xL_p-L_{px}+L_{px}L_p,
$$
which tells us that $2L_{px}L_p=L_pL_x+L_{px}$. Hence  $2L_xL_p=2L_{px}L_p=L_pL_x+L_{px}$, and cancelling we conclude that $L_xL_p=L_{px}$. Then, $p(xy) = (px)y = x(py) = (px)(py)$.

Now $(px)^2=px^2=L_pL_xx=L_{px}x=(px)x$ and this finishes the proof.
\end{proof}

\begin{rem}
The assertions of the previous lemma are essentially in \cite[Section 2.5]{stormer}. Since $x(py)=p(xy)$ for each $x,y\in \V$, we have that $\texttt{I}_{p}=p\V=\U_p(\V)$ is a Jordan Ideal of $\V$ for each central projection $p\in \V$. It is not hard to see that for any idempotent $p\in \V$, the space $p\V$ is an ideal if and only if $p$ is central \cite[2.5.7]{stormer}. For $JBW$-algebras (JB-algebras with predual space), central projections are in one-to-one correspondence with Jordan ideals of $\V$, which are of the form $p\V$ for some central projection $p\in\V$ (see \cite[Proposition 4.3.6]{stormer}).
\end{rem}

Let $p\in\V$ be a central projection, let $\varepsilon_p$ be the central symmetry $2p-1$. By the previous lemma  $L_p$ is an idempotent of $\bv$. Let $S_p=L_{\varepsilon_p} = 2L_p-1$, then $S_p$ is a symmetry of $\bv$,  i.e. $S_p^2=1$.

\begin{lema}\label{LepsIsStr} Let $p\in \V$ be a central projection, then $S_p=2L_p-1=L_{\varepsilon_p}\in \Str$.
\end{lema}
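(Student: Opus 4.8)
The goal is to produce, for $g=S_p=L_{\varepsilon_p}$, an operator $g^*\in\glv$ satisfying $\U_{gx}=g\,\U_x\,g^*$ for every $x\in\V$; the natural guess is $g^*=S_p$ itself, so the plan is to verify the single identity $\U_{S_px}=S_p\,\U_x\,S_p$ for all $x$.

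First I would isolate two elementary facts about $S_p=2L_p-1$ coming from Lemma \ref{centrals}. Since $L_p=L_p^2$, expanding $(2L_p-1)^2$ gives $S_p^2=1$; in particular $S_p\in\glv$ with $S_p^{-1}=S_p$. Moreover $\varepsilon_p$ is central, so $L_{\varepsilon_p}=S_p$ commutes with every $L_y$ and hence with every $\U_x=2L_x^2-L_{x^2}$. Combining these, $S_p\,\U_x\,S_p=\U_x\,S_p^2=\U_x$, so the identity to be proved collapses to $\U_{S_px}=\U_x$, that is, $\U_{\varepsilon_p\circ x}=\U_x$.

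To establish $\U_{\varepsilon_p\circ x}=\U_x$ I would compute $L_{\varepsilon_p\circ x}$ and $(\varepsilon_p\circ x)^2$ explicitly. Writing $\varepsilon_p\circ x=p\circ x-(1-p)\circ x$ and applying $L_{p\circ x}=L_pL_x$ from Lemma \ref{centrals} (and the same for the central projection $1-p$), one finds $L_{\varepsilon_p\circ x}=(2L_p-1)L_x=S_pL_x$, where $S_p$ and $L_x$ commute. Consequently $L_{\varepsilon_p\circ x}^2=S_p^2L_x^2=L_x^2$, and $(\varepsilon_p\circ x)^2=L_{\varepsilon_p\circ x}(\varepsilon_p\circ x)=S_pL_xS_px=S_p^2L_xx=x^2$. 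Substituting into the quadratic representation gives $\U_{\varepsilon_p\circ x}=2L_{\varepsilon_p\circ x}^2-L_{(\varepsilon_p\circ x)^2}=2L_x^2-L_{x^2}=\U_x$, which is exactly what the reduction required. Hence $\U_{S_px}=S_p\,\U_x\,S_p$, so $S_p\in\Str$; and since $g^*=S_p=S_p^{-1}$, this also records $S_p^*=S_p^{-1}$ (relevant to Theorem \ref{caractautomorf}).

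The argument involves no genuine obstacle: everything reduces to the algebraic identities of Lemma \ref{centrals} and to the centrality of $\varepsilon_p$, both already available, with no recourse to functional calculus, limits, or the infinite-dimensional machinery. The only point demanding care is the bookkeeping with the commutation $S_pL_x=L_xS_p$ and the relation $S_p^2=1$, which must be invoked consistently when simplifying the products above.
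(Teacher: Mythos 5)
Your proof is correct and follows essentially the same route as the paper's: both reduce the claim to the identity $\U_{S_px}=\U_x$, established via $L_{S_px}=S_pL_x$ and $(S_px)^2=x^2$ from Lemma \ref{centrals}, and then conclude using $S_p^2=1$ and the commutation of $S_p$ with every $\U_x$. The only cosmetic difference is that you exhibit $g^*=S_p$ directly from the definition of $\Str$, whereas the paper routes the conclusion through $\U_{\varepsilon_p}=1$ and the formula $g^*=g^{-1}\U_{g1}$.
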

\begin{proof}
We first compute $(S_pz)^2=(2pz-z)(2pz-z)=4(pz)^2-4(pz)z+z^2=z^2$ by the previous lemma. On the other hand
$$
L_zL_{\varepsilon_p}=L_{\varepsilon_p}L_z=2L_pL_z-L_z=L_{2pz}-L_z=L_{2pz-z}=L_{\varepsilon_pz}=L_{S_pz}.
$$
Note now that $\U_{\varepsilon_p}=L_{\varepsilon_p^2}=1$ again from \cite[Theorem 5]{youngson2}.  Thus
\begin{align*}
\U_{S_p z}=2 L_{\varepsilon_p}^2(L_z)^2-L_{z^2}=2 L_1(L_z)^2-L_{z^2}=\U_z=S_pS_p^{-1}\U_z 1=S_p\U_zS_p^{-1} \U_{\varepsilon_p}
\end{align*}
since $\U_z$ commutes with $S_p$ (which is its own inverse). 
\end{proof}

\begin{rem}[Each central projection gives a different copy of the cone] For a central projection $p\in\V$, the set $S_p(\Omega)$ will then be a cone; for $p=1$ we obtain $S_p=Id$ and the cone $\Omega$ while for $p=0$ we obtain $S_p=-Id$ and the cone $-\Omega$. We claim that \textit{different central projections give different cones}: if $S_{p_1}\GO=S_{p_2}\GO$, then $S_{p_1}^{-1}S_{p_2} = S_{p_1}S_{p_2}=L_{\varepsilon_{p_1}}L_{\varepsilon_{p_2}}\in \GO$. Let $p=(\varepsilon_{p_1}\varepsilon_{p_2}+1)/2$; from 
$$
(\varepsilon_{p_1}\varepsilon_{p_2})^2=\varepsilon_{p_1}^2\varepsilon_{p_2}^2=1\cdot 1=1
$$
(since $L_{p_1}$ commutes with $L_{p_2}$), we see that $p^2=p$ is a central projection. Then $S_p=L_{\varepsilon_p}=L_{\varepsilon_{p_1}}L_{\varepsilon_{p_2}}\in \GO$, and in particular $2p-1=S_p(1)>0$. Thus $p>1/2$ and in particular $p$ is invertible. Since $p^2=p$, it must be then that $p=1$, thus $S_p = S_{p_1}S_{p_2} =Id$. We conclude that $S_{p_1}=S_{p_2}$ or equivalently that $p_1=p_2$.
\end{rem}

\begin{rem}\label{remecucentproy}Let $p\in \V$ be an idempotent, $p^2=p$, let $p'=1-p$. Then $\U_{\varepsilon_p}$ is an involutive automorphism, and the following identities are elementary:
$$
\begin{array}{ll}
a)\; \U_{\varepsilon_p}=8L_p^2-8L_p+1=1-4\U_{p,p'} &\qquad b)\; \U_p-\U_{p'}=2L_p-1=L_{\varepsilon_p}\\
c)\; 2\U_{p,p'}=4L_p(1-L_p) &\qquad d)\; L_{\varepsilon_p}L_p(1-L_p)=0\\
e)\; L_{\varepsilon_p}U_{p,p'}=0 & \qquad f)\; L_{\varepsilon_p}^2=1-2\U_{p,p'}.
\end{array}
$$
The last identity follows from the fact that $\sigma(L_p)\subset \{0,\frac{1}{2},1\}$. 
\end{rem}

\begin{rem}\label{rempiercecentr} As we have discussed before in Theorem \ref{upositive}, the space $\V$ can be decomposed into three summands $J_0$, $J_1$ and $J_2$ by means of the Pierce decompostion, where each of these spaces is the range of the projections $U_p$, $U_{p'}$ and $2U_{p,p'}$ respectively. But these spaces are also the eigenspaces of the operator $L_p$, associated to the eigenvalues $0$, $1/2$ and $1$. In this section we will rename these $J$ spaces as $\V_1^p$, $\V_0^p$ and $\V_{1/2}^p$. For more details and proof of the assertions used see \cite[Theorems 8.1.4 and 8.2.1]{mccrimmon}.  Let's regroup the summands of the Pierce decomposition by means of the involutive automorphism $\U_{\varepsilon_p}$ as follows:
$$
\V^p=\V^p_0\oplus \V_1^p = \{v\in\V: \U_{\varepsilon_p}v=v\}, \qquad \V^p_{1/2}=\{v\in\V: \U_{\varepsilon_p}v=-v\}=\ker(L_{\varepsilon_p}).
$$
Then $\V=\V^p\oplus \V^p_{1/2}$; the fact that the sum is direct is reflected in the fact that $L_{\varepsilon_p}\U_{p,p'}=0$. We have that $\V^p=\ker(\U_{p,p'})=\ker (L_p^2-L_p)$ is a JB-subalgebra of $\V$. Moreover, since $L_p^2=L_p$ in $\V^p$, then $p$ is a central projection in $\V_p$ and thus $L_{\varepsilon_p}^2$ is the projection onto $\V^p$ and it is the identity there (Remarks \ref{xvslx} and \ref{unoth}).

\smallskip

 On the other hand $\V^p_{1/2}=\ker(2L_p-1)$ is a subspace, and $2\U_{p,p'}$ is an idempotent onto it,  but it is not a subalgebra: it contains no squares. However if $z\in \V^p$ or $z\in \V^p_{1/2}$ it is plain that $U_z(\V^p_{1/2})\subset \V^p_{1/2}$ and $\V^p_{1/2}$ is a Jordan triple system. Let $x\in \V^p$ and $y\in \V^p_{1/2}$ and it can be checked by hand that
$$
\U_px=px,\quad\U_py=0,\quad L_{\varepsilon_p}^2x=x, \quad  L_{\varepsilon_p}y=0.
$$
\end{rem}

We now give a full characterization of the elements in $\Str$:

\begin{teo}\label{stru}Let $g\in \Str$, then there exist $v\in\Omega$, a central projection $p\in \V$ and an automorphism $k\in\Aut$ such that
$$
g=\U_v S_p k=S_p\U_v k.
$$
\end{teo}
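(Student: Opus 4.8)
The strategy is to peel off a positive quadratic operator so as to reduce the statement to the value of $g$ at $1$, which is governed by Lemma \ref{auto1}. Put $w=g(1)$; since $g\in\Str$ and $1$ is invertible, $w$ is invertible (Remark \ref{ginvertible}). Let $p_\pm=\chi_\pm(w)\in\mathcal C(w)$ be the spectral idempotents of $w$, so (as in Remark \ref{uequis}) $w=w_++w_-$ with $w_\pm=p_\pm w$, and set $|w|=w_+-w_-\in\Omega$ and $\varepsilon=p_+-p_-=2p_+-1$, a symmetry with $w=|w|\circ\varepsilon$. Writing $v=|w|^{1/2}\in\Omega$ (Remark \ref{expsq}) and $h=\U_v^{-1}g\in\Str$, all of $v,|w|,\varepsilon$ lie in the commutative associative subalgebra $\mathcal C(w)$, where $\U_v^{-1}=\U_{v^{-1}}$ is multiplication by $v^{-2}=|w|^{-1}$; hence $h(1)=|w|^{-1}\circ(|w|\circ\varepsilon)=\varepsilon$. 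Thus, after factoring out $\U_v$, the problem becomes: for $h\in\Str$ with $h(1)=\varepsilon$ a symmetry, show that $\varepsilon$ is in fact a \emph{central} symmetry.

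This centrality is the crux, and I expect it to be the main obstacle. By Remark \ref{ginvertible} together with Theorem \ref{upositive}, it is equivalent to $\sigma(\U_{h1})=\sigma(hh^*)\subset(0,+\infty)$, the positivity of the spectrum of the congruence $hh^*=h\U_1 h^*$ of the identity. Now $\U_{h1}=\U_\varepsilon=1-4\U_{p_+,p_-}=1-2(2\U_{p_+,p_-})$ by Remark \ref{remecucentproy}, and $2\U_{p_+,p_-}$ is the projection onto the middle Peirce space $\V^{p_+}_{1/2}$ (Remark \ref{rempiercecentr}); hence $\sigma(\U_\varepsilon)\subset\{1,-1\}$, with $-1$ occurring exactly when $\V^{p_+}_{1/2}\neq 0$, i.e. exactly when $p_+$ is not central. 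So the whole difficulty is to show that membership of $h$ in $\Str$ prevents the congruence $hh^*$ from acquiring the eigenvalue $-1$, forcing $\V^{p_+}_{1/2}=0$ and $p_+$ central. In finite dimensions this is Sylvester's law of inertia applied to $hh^*$ (with $*$ the trace-form adjoint); since no trace form is available here, Theorem \ref{upositive} is precisely the tool that must replace the inertia theorem, and making this work is the genuinely infinite-dimensional step.

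Granting that $\varepsilon=\varepsilon_p$ is a central symmetry (with $p=p_+$ a central projection), the remainder is routine. By Lemma \ref{LepsIsStr}, $S_p=L_{\varepsilon_p}\in\Str$ and $S_p^2=1$. Define $k:=S_p h=S_p\,\U_v^{-1}\,g$; as a product of elements of $\Str$ it lies in $\Str$, and $k(1)=S_p(\varepsilon_p)=\varepsilon_p\circ\varepsilon_p=\varepsilon_p^2=1$. By Lemma \ref{auto1}, $k\in\Aut$, and since $S_p^2=1$ we recover $h=S_p k$, whence $g=\U_v h=\U_v S_p k$.

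Finally, the second factorization with the same data follows from centrality: because $p$ is central, $L_p$ commutes with every $L_z$, in particular with $L_v$ and $L_{v^2}$, hence with $\U_v=2L_v^2-L_{v^2}$; as $S_p=2L_p-1$ this yields $[\U_v,S_p]=0$ and therefore $g=\U_v S_p k=S_p\U_v k$. As a consistency check, specializing to $g^*=g^{-1}$ forces $\U_v^2=\U_{v^2}=\mathrm{Id}$, hence $v^2=\U_v(1)=1$ and $v=1$, so $g=S_p k$; thus Theorem \ref{caractautomorf} is recovered as the special case $v=1$, confirming that this statement cannot be invoked to settle the crux above and that the positivity must indeed be proven directly from Theorem \ref{upositive}.
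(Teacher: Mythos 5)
Your reduction (peeling off $\U_v$ with $v=|g(1)|^{1/2}$ to get $h\in\Str$ with $h(1)=\varepsilon$ a symmetry) and your endgame (granted centrality, set $k=S_ph$, invoke Lemma \ref{auto1}, and use $S_p^2=\mathrm{Id}$ together with $[\U_v,S_p]=0$) are both correct, and they match the opening and closing of the paper's own proof. But the proposal has a genuine gap exactly where you flag it: you never prove that the symmetry $\varepsilon=h(1)$ is central, you only reformulate that claim. Restating centrality as ``$-1\notin\sigma(\U_\varepsilon)$'', i.e. $\V^{p_+}_{1/2}=0$, via the Peirce decomposition is the same statement in different clothes, and nothing you cite supplies it. In particular, Theorem \ref{upositive} cannot by itself ``replace the inertia theorem'': it tells you what $x$ must look like \emph{if} $\sigma(\U_x)\subset(0,+\infty)$, but the needed input --- that membership of $h$ in $\Str$ forces $\sigma(hh^*)=\sigma(\U_\varepsilon)\subset(0,+\infty)$ --- is essentially the assertion being proven (that $g(1)$ always has the form $v\varepsilon_p$ with $p$ central), so invoking it there would be circular. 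Your proof is therefore complete only modulo its own hardest case.

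For comparison, the paper devotes the bulk of its argument to precisely this point, and the mechanism is not routine. It first shows $h^{-1}(1)=\varepsilon_q$ for some idempotent $q$ (from $\U_{h^{-1}1}=h^{-1}\U_{\varepsilon_p}h$ and injectivity of the quadratic representation on the cone), then passes to the complexification $\V^{\mathbb C}$: the element $p+ip'$ is a square root of $\varepsilon_p$ inside $\mathcal C(p)^{\mathbb C}$, so $k=\U_{p+ip'}h$ satisfies $k(1)=1$ and is an automorphism of $\V^{\mathbb C}$ by the complex case of Lemma \ref{auto1}. Splitting $k=\alpha+i\beta$ into real and imaginary parts and exploiting the intertwining $hL_q=L_ph$, one shows $\ker(\alpha|_{\V})=\V^q_{1/2}$ and $\ker(\beta|_{\V})=\V^q$; if $\V^q_{1/2}\neq 0$, choosing $0\neq y\in\V^q_{1/2}$ and $x\in\V^q$ with $x^2=y^2$ yields the contradiction $k(x^2)=\alpha(x)^2>0$ while $k(x^2)=k(y^2)=(i\beta(y))^2=-\beta(y)^2<0$. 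That contradiction --- a square acquiring both a positive and a negative image under the complexified automorphism --- is the infinite-dimensional surrogate for Sylvester's inertia that your outline calls for but does not construct. To complete your proposal you must either reproduce an argument of this type or find a genuinely different route to the positivity of $\sigma(hh^*)$.
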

\begin{proof}
Let $z=g(1)$, from Remark \ref{ginvertible} we know that $z$ is invertible, and by Remark \ref{uequis} we can write $z=z_+-z_-$ with $z_{\pm}\ge  0$. Let $p,p'$ be range projections for $z_{\pm}$ respectively; since $z$ is invertible it must be $p'=1-p$. Let $\varepsilon_p=2p-1$, and consider  $|z|=z_++z_-=\varepsilon_p z$, then $|z|\in \Omega$. Let $v=\sqrt{|z|}\in\Omega$, and let $h=\U_v^{-1}g\in \Str$. Now it only suffices to show that $h = S_p k$, as $g = U_vh$. We note that 
$$
h(1)=\U_v^{-1}z=\U_v^{-1}\varepsilon_p v^2=\varepsilon_p
$$
since $v,\varepsilon_p\in \mathcal C(z)$. Moreover, if we consider $h^{-1}(1)$, then $Id = \U_{h^{-1}h1} = h^{-1}\U_{\varepsilon_p}h\U_{h^{-1}1}$ and therefore $\U_{h^{-1}1}=h^{-1}\U_{\varepsilon_p}h$. Hence $\U_{(h^{-1}(1))^2} = \U_{h^{-1}(1)}^2=h^{-1}\U_{\varepsilon_p}^2h=Id$. Thus, by Theorem \ref{upositive} as $(h^{-1}(1))^2$ is positive it must be $(h^{-1}(1))^2=1$, and then $h^{-1}(1)=\varepsilon_q$ for some idempotent $q\in \V$. Let $\V^{\mathbb C}=\V\oplus i\V$ be the complexification of $\V$ making it a $JB^*$-algebra; since $h\in \Str$, by Remark \ref{gcestr} we know that $h^{\mathbb C}$ (the complexification of $h$) belongs to $\texttt{Str}(V^{\mathbb C})$. We will call this complexification $h$, for short.  It is plain that $p+ip'\in \V^{\mathbb C}$, is an element in (the complexification of) $\mathcal C(p)$, and it is a square root of the symmetry $\varepsilon_p$, as $(p+ip')^2=p-p'+0=\varepsilon_p$. Let $k=\U_{p+ip'}h$, then $k\in \texttt{Str}(V^{\mathbb C})$ and
$$
k(1)=\U_{p+ip'}\varepsilon_p=(p+ip')^2\varepsilon_p=\varepsilon_p^2=1,
$$
where this is valid as the operations happen inside $\mathcal C(p)$. Thus it must be that $k\in \texttt{Aut}(V^{\mathbb C})$ (see Lemma \ref{auto1}). Since $(p+ip')^{-1}=p-ip'$, we have $h=\U_{p-ip'}k$. Note that
$$
k(\varepsilon_q) = U_{p+ip'}h(\varepsilon_q) = U_{p+ip'}(1) = (p+ip')^2 = \varepsilon_p,
$$
hence $k(q)=p$ and therefore $k(q')=p'$ and $h=k\U_{q-iq'}=\U_{p-ip'}k$ as $k$ is an automorphism. We claim that $hL_q=L_ph$ in $\V^{\mathbb C}$: from $h\U_{q+iq'}=k=\U_{p+ip'}h$ and the fact that $\U_{p+ip'}= U_p - U_{p'} + 2i\U_{p,p'} = L_{\varepsilon_p}+2i\U_{p,p'}$ (and likewise for $q$), we get
$$
hL_{\varepsilon_q}+2i h\U_{q,q'}=k=L_{\varepsilon_p}h+2i\U_{p,p'}h,
$$
and evaluating in $x\in \V$ it must be that $hL_{\varepsilon_q}(x)=L_{\varepsilon_p}h(x)$, since $h$ maps $\V$ into itself. Thus $hL_q=L_ph$ in $\V$, but passing to the complexification it is plain that $hL_q=L_qh$ holds also in $\V^{\mathbb C}$. It is obvious that the same applies to $k$, $kL_q=L_pk$, as $k$ is an automorphism. Now we write $k(v)=\alpha(v)+i\beta(v)$, with $\mathbb{R}-$linear $\alpha,\beta:\V^{\mathbb C}\to \V$, the real and imaginary parts of $k$ given by
$$
\alpha(v)=\nicefrac{1}{2}(k(v)+k(v)^*),\qquad  \beta(v)=\nicefrac{1}{2i}(k(v)-k(v)^*).
$$
We compute
\begin{align*}
h&=\U_{p-ip'}k=(L_{\varepsilon_p}-2i\U_{p,p'})(\alpha+i\beta)=L_{\varepsilon_p}\alpha+2\U_{p,p'}\beta+i[L_{\varepsilon_p}\beta-2 \U_{p,p'}\alpha].
\end{align*}
Since $h$ maps $\V$ into $\V$, for $x\in \V$ it must be
$$
L_{\varepsilon_p}\beta(x) -2 \U_{p,p'}\alpha(x)=0.
$$
Applying $L_{\varepsilon_p}$, we see that $L_{\varepsilon_p}^2\beta(x)=0$, thus $\beta(x)=2\U_{p,p'}\beta(x)=4L_p(1-L_p)\beta(x)$ by Remark \ref{remecucentproy}. Appling $L_{\varepsilon_p}$ again, we see that $L_{\varepsilon_p}\beta(x)=0$, or equivalently, that $\beta(x)\in \V^p_{1/2}$ when $x\in\V$. Then it must also be that $2\U_{p,p'}\alpha(x)=0$, and this in turn implies that $\alpha(x)\in \V^p$ when $x\in \V$. Hence, for $x\in \V$ we have
$$
h(x)=L_{\varepsilon_p}\alpha(x)+2\U_{p,p'}\beta(x)=L_{\varepsilon_p}\alpha(x)+\beta(x).
$$
We claim that $L_p\alpha=\alpha L_q$. To prove it, first we right-multiply $L_{\varepsilon_q}$ in the previous identity, and by Remarks \ref{remecucentproy} and \ref{rempiercecentr} we have
\begin{equation}\label{alfa}
L_{\varepsilon_p}\alpha(L_{\varepsilon_q}x)+\beta(L_{\varepsilon_q}x)=h(L_{\varepsilon_q}x)=L_{\varepsilon_p}h(x)=L_{\varepsilon_p}^2\alpha(x)+0=\alpha(x).
\end{equation}
Applying again $L_{\varepsilon_p}$ we see that $\alpha L_{\varepsilon_q}=L_{\varepsilon_p}\alpha$. On the other hand, from this, the fact that $L_{\varepsilon_p}^2\alpha(x)=\alpha(x)$ for $x\in\V$ and equation (\ref{alfa}) we see that
$$
\alpha(x)=L_{\varepsilon_p}\alpha(L_{\varepsilon_q}x)+\beta(L_{\varepsilon_q}x)=\alpha(x)+\beta(L_{\varepsilon_q}x),
$$
hence it must be $\beta L_{\varepsilon_q}=0$. From $L_{\varepsilon_p}\beta = 0$ it follows that $2L_p\beta=\beta$ and analogously that $2\beta L_q = \beta$, then we have $\beta L_q=\beta/2=L_p\beta$ also.  Let us now show that $\ker(\alpha|_{\V})=\V^q_{1/2}$ and that $\ker(\beta|_{\V})=\V^q$. We will call these restrictions $\alpha$ and $\beta$ for short. First we write
$$
kL_{\varepsilon_q}(y)=L_{\varepsilon_p}k(y)=L_{\varepsilon_p}\alpha(y) +i L_{\varepsilon_p}\beta(y) = L_{\varepsilon_p}\alpha(y).
$$
Then if $\alpha(y)=0$, it must be that $L_{\varepsilon_q}y=0$ thus $y \in V^q_{1/2}$. Reciprocally, if $y \in V^q_{1/2}$ then $L_{\varepsilon_q}y=0$, which in turn implies $L_{\varepsilon_p}\alpha(y)=0$, and applying $L_{\varepsilon_p}$ we see that $y\in \ker (\alpha)$. Now for the kernel of $\beta$, we write 
$$
k(2\U_{q,q'}x)=2\U_{p,p'}k(x)=2\U_{p,p'}\alpha(x) + 2i\U_{p,p'}\beta(x)=i2\U_{p,p'}\beta(x)=i\beta(x).
$$
If $x\in\ker\beta$, then $U_{q,q'}x=0$ thus $x\in \V^q$. Reciprocally, if $x\in\V^q$ then $\U_{q,q'}x=0$ thus $k(2\U_{q,q'}x) =i\beta(x)=0$, thus $x\in \ker\beta$.  Assumme that there exist $y\ne 0$ in $\V^q_{\nicefrac{1}{2}}$. Then $y^2>0$ and moreover it is plain that $y^2\in \V^q$ by Remark \ref{rempiercecentr}. Thus there exists $0\ne x\in \V^q$ such that $x^2=y^2$. Since $k(x)=\alpha(x)+i\beta(x)=\alpha(x)$, and $\alpha$ is nonzero in $\V^q\setminus\{0\}$, we get
$$
k(x^2)=k(x)^2=\alpha(x)^2>0.
$$
On the other hand, since $\alpha(y)=0$ and $\beta$ is nonzero in $\V_{\nicefrac{1}{2}}^q\setminus \{0\}$, we have 
$$
k(x^2)=k(y^2)=k(y)^2= (i\beta(y))^2=-\beta(y)^2<0,
$$
a contradiction. Thus it must be $\V^q_{1/2}=\{0\}$ and $\V=\V^q$, so $\beta$ is null in $\V$. Then $k|_{\V} = \alpha|_{\V}$ thus $k$ maps $\V$ into $\V$.  Moreover, since $\U_{\varepsilon_q}\equiv 1$ in $\V$, it follows that $q$ is a central projection. Then for all $z$
$$
L_pL_zk = kL_qL_{k^{-1}(z)} = kL_{k^{-1}(z)}L_q = L_zL_pk,
$$
and thus $p$ is also central. Then,
$$
h = U_{p-ip'}k = (U_p - U_{p'} + 2iU_{p,p'})k = L_{\varepsilon_p}k
$$
and thus $g = U_vh = U_vL_{\varepsilon_p}k$.
\end{proof}

\begin{coro}\label{GOcopiesStr}
There exist one different copy of $\GO$ in $\Str$ for each central projection $p\in \V$ (given by $S_p\GO$), and all copies are obtained in such fashion.
\end{coro}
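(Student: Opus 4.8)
The plan is to read off the statement directly from Theorem \ref{stru}, combined with the bijection between cosets $g\GO$ and cones $g(\Omega)$ recorded above and with the Remark showing that distinct central projections yield distinct cones. Recall we have already observed that $\Str=\bigsqcup_j g_j\GO$ partitions into cosets, and that for $g,h\in\Str$ the equality $g\GO=h\GO$ holds exactly when $g(\Omega)=h(\Omega)$. Thus counting the copies of $\GO$ inside $\Str$ amounts to parametrizing the distinct cosets, and my strategy is to exhibit the map $p\mapsto S_p\GO$, from central projections of $\V$ to cosets, as a bijection onto the set of all cosets.

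First I would establish that every coset is of the claimed form. Given any $g\in\Str$, Theorem \ref{stru} furnishes $v\in\Omega$, a central projection $p\in\V$, and $k\in\Aut$ with $g=S_p\U_v k$. Now $\U_v\in\texttt{Inn\!}\Str\subset\GO$ by Remark \ref{autinng}, while $k\in\Aut\subset\GO$; since $\GO$ is a group, the product $\U_v k$ lies in $\GO$. Hence $g\GO=S_p\U_v k\GO=S_p\GO$, so every copy of $\GO$ in $\Str$ equals $S_p\GO$ for some central projection $p$, giving surjectivity of $p\mapsto S_p\GO$.

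For injectivity I would simply invoke the Remark preceding Theorem \ref{stru} (``Each central projection gives a different copy of the cone''), which shows that $S_{p_1}\GO=S_{p_2}\GO$ forces $p_1=p_2$: the argument there forms the central projection $p=(\varepsilon_{p_1}\varepsilon_{p_2}+1)/2$, deduces $S_p=L_{\varepsilon_{p_1}}L_{\varepsilon_{p_2}}\in\GO$, and from $S_p(1)>0$ concludes $p=1$, i.e. $S_{p_1}=S_{p_2}$. Combining the two directions, $p\mapsto S_p\GO$ is a bijection between the central projections of $\V$ and the copies of $\GO$ in $\Str$, which is exactly the assertion of the corollary.

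The bulk of the work has already been carried out in Theorem \ref{stru} and in the Remark on distinct cones, so I do not expect any genuinely new obstacle here. The only point that requires care is to check that the factor $\U_v k$ truly lands in $\GO$ rather than merely in $\Str$, which is precisely what Remark \ref{autinng} (together with $\Aut\subset\GO$) guarantees; once this is in place the proof is immediate.
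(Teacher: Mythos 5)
Your proof is correct and follows exactly the route the paper intends: the corollary is stated as an immediate consequence of Theorem \ref{stru} (every $g\in\Str$ factors as $S_p\U_v k$ with $\U_v k\in\GO$, giving surjectivity of $p\mapsto S_p\GO$) together with the Remark preceding it (distinct central projections give distinct cosets, giving injectivity). Your write-up simply makes explicit the two references the paper leaves implicit, so there is nothing to correct.
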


We can now prove Theorem \ref{caractautomorf}:

\begin{teo}
	Let $g\in \Str$. Then $g^* = g^{-1}$ if and only if $g =  S_pk = L_{\varepsilon_p} k $, where $k$ is a multiplicative automorphism of $\V$ and $\varepsilon_p$ is a central symmetry.
\end{teo}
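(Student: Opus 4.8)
The plan is to combine the structure theorem (Theorem \ref{stru}) with the multiplicative behavior of the adjoint $g\mapsto g^*$, for which I will use the three facts $(gh)^*=h^*g^*$, $(g^{-1})^*=(g^*)^{-1}$ and $g^*=g^{-1}\U_{g1}$ recorded earlier. Before starting I would pin down the adjoints of the two building blocks. For $k\in\Aut$ we already know $k^*=k^{-1}$ (Remark \ref{rem:autinv}). For $S_p=L_{\varepsilon_p}$ I would observe, using Lemma \ref{LepsIsStr}, that $S_p1=\varepsilon_p$ and $\U_{\varepsilon_p}=1$, so that $S_p^*=S_p^{-1}\U_{S_p1}=S_p^{-1}\U_{\varepsilon_p}=S_p^{-1}$. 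Finally, since $v\in\Omega$ is invertible, $\U_v\in\Str$ with $\U_v^*=\U_v$.

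For the ``if'' direction, suppose $g=S_pk$ with $p$ a central projection and $k\in\Aut$. Using the order-reversing property of $*$ and the two adjoint identities above, I compute $g^*=(S_pk)^*=k^*S_p^*=k^{-1}S_p^{-1}=(S_pk)^{-1}=g^{-1}$, which is the claim. For the ``only if'' direction, given $g^*=g^{-1}$, I apply Theorem \ref{stru} to write $g=\U_v S_p k$ with $v\in\Omega$, $p$ central and $k\in\Aut$. Then the antihomomorphism property gives $g^*=k^*S_p^*\U_v^*=k^{-1}S_p^{-1}\U_v$, whereas $g^{-1}=k^{-1}S_p^{-1}\U_v^{-1}$. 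Cancelling the invertible factor $k^{-1}S_p^{-1}$ from the hypothesis $g^*=g^{-1}$ forces $\U_v=\U_v^{-1}$, that is $\U_v^2=Id$.

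It then remains to upgrade $\U_v^2=Id$ to $v=1$. Here I would use the instance $\U_v^2=\U_{\U_v1}=\U_{v^2}$ of the fundamental formula (together with $\U_v1=v^2$ and $\U_1=Id$), so that $\U_{v^2}=\U_1$; since $\U$ is injective on the cone $\Omega$ and both $v^2,1\in\Omega$ (Remark \ref{expsq}), this yields $v^2=1$, and uniqueness of the positive square root gives $v=1$. Hence $g=\U_1S_pk=S_pk$ with $\varepsilon_p=2p-1$ a central symmetry and $k$ a multiplicative automorphism, as desired. The only delicate point is the adjoint bookkeeping, in particular the identity $S_p^*=S_p^{-1}$, which is not automatic and hinges on $\U_{\varepsilon_p}=1$; once that is in hand, the reduction to $\U_v^2=Id$ and then to $v=1$ via injectivity of the quadratic representation on $\Omega$ is immediate.
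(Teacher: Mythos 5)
Your proof is correct and follows essentially the same route as the paper: both use Theorem \ref{stru} to write $g=\U_v S_p k$, the adjoint identities $k^*=k^{-1}$, $S_p^*=S_p^{-1}$ (via $\U_{\varepsilon_p}=1$) and $\U_v^*=\U_v$ to reduce the hypothesis to $\U_v^2=\U_{v^2}=Id$, and then uniqueness of positive square roots to force $v=1$. Your final step via injectivity of the quadratic representation on $\Omega$ (Remark \ref{expsq}) is just a repackaging of the paper's evaluation $x^4=\U_{x^2}1=1$, so there is no substantive difference.
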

\begin{proof}
If $g  = S_p k $ then by Remark \ref{rem:autinv} we have $k^*= k^{-1}$ and $S_p^* = S_p^{-1} U_{S_p1} = S_p^{-1} U_{\varepsilon_p} = S_p^{-1}$, so $g^* = g^{-1}$. Now assumme $g^* = g^{-1}$, we know by Theorem \ref{stru} that $g = \U_xS_pk$ with positive $x$; replacing this in the equality we obtain that
	\begin{equation*}
		k^{-1}S_p^{-1}\U_x = k^{-1}S_p^{-1}\U_x^{-1}
	\end{equation*}
	and therefore $\U_x^2 = \U_{x^2} = Id$. Then $x^4 = 1$, but as both $x^2$ and $x$ are positive and there is an unique positive square root we have that $x = 1$. This gives us $g = S_pk$.
\end{proof}

There is a better description of $g(\Omega)$: it is the cone of positive elements for a different product in $\V$.

\begin{defi}[Jordan homotopes and isotopes]
	Let $u$ be an element of $\V$. Define a new product in $\V$ as $x \cdot_u y = \U_{x,y}(u)$. This new product induces a new Jordan algebra $\V_u$, not necessarily isomorphic to the original, called a Jordan homotope. Moreover, $\V_u$ will be unital if and only if $u$ is an invertible element, and $1_u = u^{-1}$. In this case, $V_u$ is called an Jordan isotope.
\end{defi}

We will asume that $u$ is invertible, so $\V_u$ is an unital algebra. We will denote $x^{2^{u}}= x \cdot_u x$; $x^{-1^{u}}$ the inverse for the $u$-product; and $\U_x^{u}$ and $\U_{x,y}^{u}$ the quadratic and bilinear operators for the $u$-product.

\begin{rem}
	With this new product we can define the usual elements and operations of a Jordan algebra. For example,
	\begin{align*}
		& x^{2^{u}}= \U_xu, & \U_x^{u} = \U_x\U_u, && \U_{x,y}^{u}=\U_{x,y}\U_u.
	\end{align*}
	An element $x$ is invertible with the new product if and only if it is invertible with the original product, and $x^{-1^{u}} = \U_u^{-1}x^{-1}$. 

	\smallskip

	One can see that for every $g\in \Str$, the map $g$ is a Jordan (i.e. multiplicative) isomorphism between $\V$ and $\V_{g(1)^{-1}}$. Moreover, this property characterizes the structure group. To expand on these topics, see the exercises at the end of \cite[Part II, Chapter 7]{mccrimmon}.
\end{rem}

\begin{prop}
	Let $g\in \Str$. Then $g(\Omega)$ is $\Omega^{g(1)^{-1}}$, the cone of positive elements of $\V_{g(1)^{-1}}$.
\end{prop}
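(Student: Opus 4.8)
The plan is to set $u = g(1)^{-1}$, so that the isotope $\V_u$ is unital with unit $1_u = u^{-1} = g(1)$, and then to reduce everything to a single spectral identity: $\sigma_u(g(x)) = \sigma(x)$ for every $x \in \V$, where $\sigma_u$ denotes the spectrum computed in $\V_u$. Since positivity of an element is defined through the positivity of its spectrum, this identity gives at once $g(x)\in \Omega^{g(1)^{-1}} \iff x\in\Omega$, that is $g(\Omega)=\Omega^{g(1)^{-1}}$. So the entire task is to establish the spectral invariance.

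To do this I would first record two facts already in hand. By the preceding remark, an element $y\in\V$ is invertible for the product $\cdot_u$ if and only if it is invertible for the original product (this comes from $\U_y^u=\U_y\U_u$ and the invertibility of $\U_u$); in other words the invertible set of $\V_u$ coincides, as a subset of $\V$, with that of $\V$. Moreover, by Remark \ref{ginvertible} both $g$ and $g^{-1}$ lie in $\Str$ and hence map invertible elements to invertible elements. The computation then runs as follows: fixing $x\in\V$ and $\lambda\in\R$, linearity together with $1_u=g(1)$ gives $g(x)-\lambda 1_u = g(x)-\lambda g(1)=g(x-\lambda 1)$. By the first fact, $g(x)-\lambda 1_u$ fails to be invertible in $\V_u$ exactly when $g(x-\lambda 1)$ fails to be invertible in $\V$, and by the second fact this happens exactly when $x-\lambda 1$ fails to be invertible in $\V$. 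Hence $\lambda\in\sigma_u(g(x))$ iff $\lambda\in\sigma(x)$, so $\sigma_u(g(x))=\sigma(x)$ and the claim follows.

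The one point that needs care, and which I would address explicitly, is ensuring that the cone $\Omega^{g(1)^{-1}}$ is genuinely well-defined. Since $g\colon\V\to\V_u$ is a Jordan isomorphism (as recalled just before the statement), the isotope inherits the JB-structure of $\V$ by transport along $g$, so its cone of positive elements is well-defined and equals the spectral cone $\{y:\sigma_u(y)\subset(0,+\infty)\}$ used above. I expect this to be the main (though mild) obstacle, as it is the only place where the JB-structure of $\V_u$, rather than bare Jordan invertibility, is invoked. If one prefers to bypass the spectrum altogether, the same conclusion follows from the multiplicative property $g(x\circ y)=g(x)\cdot_u g(y)$, whence $g(x^2)=g(x)^{2^{u}}$: this shows that $g$ carries the invertible squares of $\V$, which are exactly $\Omega$ by Remark \ref{expsq}, bijectively onto the invertible squares of $\V_u$, which are exactly $\Omega^{g(1)^{-1}}$.
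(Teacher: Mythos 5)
Your proposal is correct and follows essentially the same route as the paper: both rest on the identity $g(x)-\lambda 1_u = g(x-\lambda 1)$ together with the fact that $g\in\Str$ preserves invertibility, yielding $\sigma_u(g(x))=\sigma(x)$ and hence the equality of cones. Your write-up merely makes explicit two points the paper leaves implicit (that invertibility in $\V_u$ coincides with invertibility in $\V$, and that positivity in the isotope is the spectral notion), which is a reasonable amount of added care rather than a different argument.
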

\begin{proof}
	Let $z$ be an element in $\V$. We want to see that $z$ belongs to $\Omega$ if and only if $g(z)$ is positive in $\V_{g(1)^{-1}}$.  Let $\lambda$ be a real number, then $g(z) - \lambda g(1) = g(z - \lambda1)$. As $g$ belongs to $\Str$, $g(z - \lambda1)$ is invertible if and only if $z -\lambda1$ is invertible. This tells us that the spectrum of $z$ in $\V$ is the same as the spectrum of $g(z)$ in $\V_{g(1)^{-1}}$, as $1_{g(1)^{-1}} = g(1)$. Then, $z$ is positive in $\V$ if and only if $g(z)$ is positive in $\V_{g(1)^{-1}}$.
\end{proof}

As we have seen before, two elements $g$ and $h$ in the same coclass give us the same cone $g(\Omega) = h(\Omega)$. Together with the last result, this tells us that the notion of positivity in different $g(1)^{-1}$-products for each $g$ in the structure group does not vary inside the coclass. Then 

\begin{coro} There exist as many copies of $\GO$ in $\Str$, as distinctive cones of positive elements $\Omega^{g(1)^{-1}} = g(\Omega)$ for different $g(1)^{-1}$-products.
\end{coro}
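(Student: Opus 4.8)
The plan is to combine the Proposition immediately above with the earlier observation relating coclasses to cones, so that only bookkeeping remains. First I would recall the decomposition $\Str=\bigsqcup_j g_j\GO$ into disjoint copies of $\GO$: by the very meaning of this decomposition, the number of copies of $\GO$ inside $\Str$ equals the number of distinct coclasses $g\GO$ with $g\in\Str$. Thus the task reduces to counting coclasses in a geometric way.

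Next I would invoke the remark establishing that, for $g,h\in\Str$, one has $g\GO=h\GO$ if and only if $g(\Omega)=h(\Omega)$. This yields a well-defined bijection between the set of coclasses and the family of distinct cones $\{g(\Omega):g\in\Str\}$: the forward implication shows that the assignment $g\GO\mapsto g(\Omega)$ does not depend on the chosen representative, the converse implication shows it is injective, and it is surjective onto the cone family by construction. In particular the map $g\mapsto g(\Omega)$ is constant on each coclass, which is exactly the earlier statement that the notion of positivity in the $g(1)^{-1}$-product is unchanged inside the coclass.

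Finally I would apply the Proposition $g(\Omega)=\Omega^{g(1)^{-1}}$ to rewrite each member of this cone family as the positive cone of the Jordan isotope $\V_{g(1)^{-1}}$. Concatenating the identifications—copies of $\GO$ with coclasses $g\GO$, coclasses with distinct cones $g(\Omega)$, and $g(\Omega)$ with $\Omega^{g(1)^{-1}}$—gives the asserted equality of cardinalities. There is no substantial obstacle in the argument itself, since it is a direct assembly of results already proved; the only point that genuinely requires care is to state the count over \emph{distinct cones} $\Omega^{g(1)^{-1}}$ rather than over isotope products, because distinct elements $g$ (and hence a priori distinct products) may well determine the same cone and therefore the same copy of $\GO$. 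The bijection of the second paragraph is precisely what guarantees that this coincidence is captured faithfully by the coclass relation, so that counting distinct cones indeed counts copies of $\GO$.
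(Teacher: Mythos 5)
Your proof is correct and takes essentially the same route as the paper: the paper also combines the remark that $g\GO=h\GO$ if and only if $g(\Omega)=h(\Omega)$ with the proposition $g(\Omega)=\Omega^{g(1)^{-1}}$, so that counting coclasses (copies of $\GO$) is the same as counting distinct cones of positive elements for the isotope products. Your closing caveat about counting distinct cones rather than distinct products is exactly the point the paper makes when it notes that positivity in the $g(1)^{-1}$-products ``does not vary inside the coclass.''
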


Moreover, we have that 
\begin{coro}
	The isotope $\V_u$ is Jordan isomorphic to $\V$ if and only if there exist $v\in \Omega$ and a  central symmetry $\varepsilon_p$ such that $u = \U_v\varepsilon_p = v^2\varepsilon_p$.
\end{coro}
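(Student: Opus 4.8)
The plan is to identify the isotopes isomorphic to $\V$ with the orbit of the unit under the structure group, and then read off that orbit from Theorem \ref{stru}. By the remark and proposition preceding the statement, each $g\in\Str$ is a Jordan isomorphism of $\V$ onto $\V_{g(1)^{-1}}$, and this property characterizes $\Str$; consequently $\V_u$ is Jordan isomorphic to $\V$ if and only if $u=g(1)^{-1}$ for some $g\in\Str$, that is, if and only if $u\in\mathcal S^{-1}$, where $\mathcal S=\{g(1):g\in\Str\}$. Thus the corollary reduces to computing $\mathcal S$ and noticing that it is closed under inversion.

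First I would describe $\mathcal S$ explicitly. Given $g\in\Str$, Theorem \ref{stru} lets me write $g=\U_vS_pk$ with $v\in\Omega$, $p$ a central projection and $k\in\Aut$. Since $k(1)=1$ and $S_p(1)=L_{\varepsilon_p}(1)=\varepsilon_p$, and since $\varepsilon_p$ is central so that $L_{\varepsilon_p}$ commutes with $\U_v=2L_v^2-L_{v^2}$, I obtain
$$
g(1)=\U_v(\varepsilon_p)=\U_vL_{\varepsilon_p}(1)=L_{\varepsilon_p}\U_v(1)=\varepsilon_p\circ v^2=v^2\varepsilon_p.
$$
Conversely, for any $v\in\Omega$ and central symmetry $\varepsilon_p$ the operator $g=\U_vS_p$ lies in $\Str$, being a product of $\U_v\in\Str$ (as $v$ is invertible) and $S_p\in\Str$ (Lemma \ref{LepsIsStr}), and the same computation gives $g(1)=v^2\varepsilon_p$. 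Hence $\mathcal S=\{\,v^2\varepsilon_p=\U_v\varepsilon_p:\ v\in\Omega,\ \varepsilon_p\text{ a central symmetry}\,\}$.

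Next I would observe that $\mathcal S$ is closed under inversion. If $z=g(1)\in\mathcal S$ then $z$ is invertible (Remark \ref{ginvertible}), so $\U_{z^{-1}}=\U_z^{-1}$ belongs to the inner structure group; thus $\U_{z^{-1}}g\in\Str$ sends $1$ to $\U_{z^{-1}}z=z^{-1}$, whence $z^{-1}\in\mathcal S$. Therefore $\mathcal S^{-1}=\mathcal S$, and combining this with the reduction of the first paragraph I conclude that $\V_u\cong\V$ if and only if $u\in\mathcal S$, i.e. $u=v^2\varepsilon_p=\U_v\varepsilon_p$ for some $v\in\Omega$ and central symmetry $\varepsilon_p$, which is the assertion.

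The step I expect to be the main obstacle is the ``only if'' half of the reduction in the first paragraph: one must know that \emph{any} Jordan isomorphism $\phi:\V\to\V_u$ is forced to act on $\V$ as an element of $\Str$, with $\phi(1)=1_u=u^{-1}$, so that every isotope isomorphic to $\V$ genuinely has the form $\V_{g(1)^{-1}}$. Granting this characterization of the structure group by isotopy (recorded in the remark before the proposition, and following \cite[Part II, Chapter 7]{mccrimmon}), the remaining ingredients are pure bookkeeping: the explicit value of $g(1)$ from Theorem \ref{stru}, the membership $S_p\in\Str$ from Lemma \ref{LepsIsStr}, and the elementary closure of $\mathcal S$ under inversion.
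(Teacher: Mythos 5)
Your proposal is correct and follows essentially the same route as the paper: both reduce the statement, via the remark that elements of $\Str$ are exactly the Jordan isomorphisms onto isotopes $\V_{g(1)^{-1}}$, to computing $\{g(1):g\in\Str\}$ from Theorem \ref{stru}, and both use Lemma \ref{LepsIsStr} for the converse. The only (cosmetic) difference is how the inversion is handled: the paper simply notes that $g(1)^{-1}=(v^2\varepsilon_p)^{-1}=(v^{-1})^2\varepsilon_p$ is again of the required form, while you prove abstractly that the orbit $\{g(1):g\in\Str\}$ is closed under inversion via $\U_{z^{-1}}g\in\Str$ — both are one-line arguments resting on the same facts.
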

\begin{proof}
	Let $g: \V \to \V_u$ be the isomorphism. As we said before, if we consider $g: \V \to \V$ then $g$ belongs to $\Str$, so by Theorem \ref{stru} there exist $v\in \Omega$, a central symmetry $\varepsilon_p$ and an automorphism $k$ such that $g= \U_vS_pk$, and $g(1) = v^2\varepsilon_p$. As $g$ is also a Jordan isomorphism between $\V$ and $\V_{g(1)^{-1}}$, we have that $Id:\V_u \to \V_{g(1)^{-1}}$ must also be a multiplicative isomorphism, and the unit must be the same. So $u^{-1} = g(1)$, and $u = (v^{-1})^2\varepsilon_p$. Now, let $u = v^2\varepsilon_p$ for a positive $v$ and central symmetry $\varepsilon_p$. We know by Lemma \ref{LepsIsStr} that $L_{\varepsilon_p}$ belongs to $\Str$ and so does $g = \U_{v^{-1}}L_{\varepsilon_p}$ with $g(1)^{-1} = u$. Then $g: \V \to \V_{u}$ is a multiplicative isomorphism.
\end{proof}

We can give another characterization of the isotopes of $\V$ by the $\U$ operators.
\begin{coro}
	The isotope $\V_x$ is Jordan isomorphic to $\V$ if and only if $\U_x$ is a positive operator.
\end{coro}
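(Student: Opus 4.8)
The plan is to read this equivalence off directly from the preceding corollary and from Theorem \ref{upositive}, since both results encode the same structural condition on $x$, merely phrased in two different languages. First I would recall that the preceding corollary characterizes the isotopes isomorphic to $\V$: the algebra $\V_x$ is Jordan isomorphic to $\V$ exactly when there exist $v\in\Omega$ and a central symmetry $\varepsilon_p$ with $x=\U_v\varepsilon_p=v^2\varepsilon_p$. On the other hand, Theorem \ref{upositive} asserts that $\sigma(\U_x)\subset(0,+\infty)$ — that is, $\U_x$ is a positive operator — precisely when $x=w\varepsilon$ for some positive $w$ and some central symmetry $\varepsilon$. The whole argument therefore reduces to checking that these two descriptions of $x$ single out the same elements.

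The key (and essentially only) point is that the factor $v^2$ appearing in the first characterization ranges over the entire positive cone, so that the forms $x=v^2\varepsilon_p$ and $x=w\varepsilon_p$ describe one and the same set. This is immediate from the functional calculus recalled in Remark \ref{expsq}: every positive element has a unique positive square root, and every square of a positive element is positive, so $v\mapsto v^2$ is a bijection of $\Omega$ onto itself. Hence requiring $x=v^2\varepsilon_p$ with $v\in\Omega$ is the same as requiring $x=w\varepsilon_p$ with $w\in\Omega$, and since every central symmetry has the form $\varepsilon_p=2p-1$ for a central projection $p$, the central-symmetry factors also match.

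With this in hand the two implications are immediate and I would write them out as follows. If $\U_x$ is positive, Theorem \ref{upositive} gives $x=w\varepsilon_p$ with $w\in\Omega$; writing $w=(w^{1/2})^2$ and invoking the preceding corollary shows that $\V_x$ is Jordan isomorphic to $\V$. Conversely, if $\V_x$ is Jordan isomorphic to $\V$, the preceding corollary gives $x=v^2\varepsilon_p$ with $v\in\Omega$, and setting $w=v^2\in\Omega$, Theorem \ref{upositive} yields $\sigma(\U_x)\subset(0,+\infty)$, i.e. $\U_x$ is positive.

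I do not anticipate a genuine obstacle: the substance has already been carried out in Theorem \ref{upositive} and in the preceding corollary, and what remains is the bookkeeping of identifying ``$v^2$ times a central symmetry'' with ``$w$ times a central symmetry''. The one point deserving care is the equality $\U_v\varepsilon_p=v^2\varepsilon_p$ implicit in the preceding corollary, which uses that $\varepsilon_p$ is central so that $\U_v$ acts on $\varepsilon_p$ as Jordan multiplication by $v^2$; this rests on the computations of Lemma \ref{centrals} and is the same identity already exploited to obtain $g(1)=v^2\varepsilon_p$ in Theorem \ref{stru}.
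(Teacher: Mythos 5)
Your proposal is correct and follows exactly the paper's own argument: combine the preceding corollary (isomorphic isotopes are the elements $v^2\varepsilon_p$ with $v\in\Omega$, $\varepsilon_p$ a central symmetry) with Theorem \ref{upositive} (positivity of $\U_x$ means $x=w\varepsilon$ with $w\in\Omega$, $\varepsilon$ a central symmetry). The only addition is that you make explicit the bookkeeping step that $v\mapsto v^2$ is a bijection of $\Omega$ onto itself, which the paper leaves implicit; this is a harmless and correct elaboration.
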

\begin{proof}
We have seen in \ref{upositive} that $\U_x$ is positive if and only if $x = v\varepsilon_p$ with $v$ positive and $\varepsilon_p$ a central symmetry, this together with the last corollary gives us the result.
\end{proof}

We have given a characterization of the isomorphic isotopes of $\V$: there is one isomorphic isotope for each element inside one of the cones of the family $\{g(\Omega) : g\in  \Str\}$. Then for associative algebras endowed with the Jordan product this gives an indication of the number of cones in $\V$:

\begin{coro}
	Let $\V$ be an associative algebra endowed with the Jordan product. Then 
	\begin{equation*}
		\glv = \cup_{g \in \Str} g(\Omega)=\{g(1):g\in \Str\}.
	\end{equation*}
\end{coro}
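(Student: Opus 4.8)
The plan is to prove the two displayed equalities by establishing the chain of inclusions
\[
\glv \subseteq \{g(1):g\in \Str\}\subseteq \bigcup_{g\in \Str} g(\Omega)\subseteq \glv,
\]
where $\glv$ is read as the set of invertible elements of the associative algebra $\V$. Two of these inclusions are essentially free, and all the content sits in the first one.

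First I would dispatch the easy inclusions. Since $1\in \Omega$, for every $g\in \Str$ we have $g(1)\in g(\Omega)$, which yields $\{g(1):g\in \Str\}\subseteq \bigcup_{g} g(\Omega)$. For the next inclusion, recall that every element of $\Omega$ is invertible (its spectrum lies in $(0,+\infty)$, so $0\notin\sigma$) and that every $g\in \Str$ sends invertible elements to invertible elements (Remark \ref{ginvertible}); hence $g(\Omega)$ consists of invertibles for each $g$, so $\bigcup_{g} g(\Omega)\subseteq \glv$.

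The heart of the matter is the remaining inclusion $\glv\subseteq \{g(1):g\in\Str\}$, i.e.\ that every invertible $z\in \V$ is realized as $g(1)$ for some $g\in \Str$. Here I would exploit that in an associative algebra the quadratic representation is $\U_x y = xyx$. The claim is that for invertible $z$ the left multiplication $g\colon y\mapsto zy$ belongs to $\Str$, with $g^*$ equal to right multiplication $y\mapsto yz$. This follows from the direct identity $\U_{zx}(y)=zxyzx = g\,\U_x\,g^*(y)$, together with the observation that $g^*$ is invertible (its inverse is right multiplication by $z^{-1}$); one checks moreover that $g^*=g^{-1}\U_{g1}$, consistent with Remark \ref{ginvertible}. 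Since $g(1)=z$, this exhibits $z\in\{g(1):g\in\Str\}$ and closes the chain, giving $\glv=\{g(1):g\in\Str\}=\bigcup_{g\in\Str}g(\Omega)$.

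The step needing the most care is the verification that $g\colon y\mapsto zy$ lies in $\Str$: one must confirm the associative identity $\U_{zx}=g\,\U_x\,g^*$ and, in parallel, that invertibility of $z$ as an element of the associative algebra coincides with its invertibility in the Jordan sense (Remark \ref{invert}), so that the symbol $\glv$ may legitimately be identified with the group of invertible elements of $\V$ on both ends of the chain. Both verifications are short computations, but they are exactly where the associativity hypothesis enters in an essential way.
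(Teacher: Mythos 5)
Your proof is correct, but it takes a genuinely different route from the paper. The paper's own proof is a one-line citation: by McCrimmon's Theorem II.7.5.1, every isotope $\V_u$ of an associative algebra ($u$ invertible) is Jordan isomorphic to $\V$, and then the conclusion follows from the machinery just developed in that subsection --- the proposition identifying $g(\Omega)$ with the positive cone of $\V_{g(1)^{-1}}$ and the corollary characterizing the isomorphic isotopes, which themselves rest on the structure theorem (Theorem \ref{stru}). You instead bypass both the citation and the isotope machinery entirely: you exhibit, for each invertible $z$, an explicit element of the structure group realizing it, namely left multiplication $\ell_z$ with $\ell_z^{*}=r_z$, verified by the one-line identity $\U_{zx}y=zxyzx=\ell_z\U_x r_z(y)$; the two trivial inclusions then close the circle of equalities. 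This is more elementary and self-contained, and in fact it silently re-proves the only piece of McCrimmon's theorem that the paper actually uses (that $g\in\Str$ is the same as $g:\V\to\V_{g(1)^{-1}}$ being a Jordan isomorphism makes your computation and McCrimmon's statement two phrasings of one fact). What the paper's route buys is the conceptual link emphasized in that subsection --- the cones $g(\Omega)$ enumerate the isomorphic isotopes --- whereas your route buys brevity, independence from Theorem \ref{stru}, and an explicit formula for the group element. Two small points you flag correctly and should keep: the symbol $\glv$ must here be read as the invertible \emph{elements} of $\V$ (the displayed sets live in $\V$, not in $\B(\V)$), and associative invertibility agrees with Jordan invertibility (if $zy z=z$ and $zy^2z=1$ then $y^2z$ is a two-sided associative inverse, and conversely). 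One tacit assumption shared by your argument and the paper's: for $\ell_z$ to lie in $\glv\subset\bv$ one needs left (and right) multiplication to be \emph{bounded} operators, i.e.\ the associative product must be norm-continuous; the paper makes the same implicit assumption when it promotes McCrimmon's purely algebraic isomorphism to an element of $\Str$, so this is a feature of the statement's level of rigor rather than a gap in your proof.
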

\begin{proof}
	By \cite[Theorem II.7.5.1]{mccrimmon}, we have that for every $u$ invertible $\V_u$ is isomorphic to $\V$. Then every invertible element is in one of the cones $g(\Omega)$ for $g \in \Str$.
\end{proof}
Note that this result is not valid for every special Jordan algebra: if we have a Jordan subalgebra that is not a subalgebra in the original product, it does not apply.

\section{The special Jordan algebra of Hilbert space operators}\label{s4}

Consider $\V = \bh_{sa}$, the self-adjoint operators of a complex separable Hilbert space $\h$ with the Jordan product  $A\circ B=1/2(AB+BA)$, and the spectral norm as JB-algebra norm. Then  $\Omega\subset \V$  is the set of positive invertible  operators, and we use $A>0$ to denote $A\in \Omega$. 

\medskip

We will characterize $\GO,\Aut$ and $\Str$. In order to achieve this, we need to introduce a few notions related to antilinear operators. We will reserve the star $*$ for the adjoint in the structure group, and we will use a dagger $\dagger$ to indicate the usual Hilbert space adjoint.

\begin{defi}A map $f:\h\to\h$ is called \textit{antilinear} if $f(x+y)=f(x) + f(y)$ and $f(\lambda x) = \overline{\lambda}f(x)$ for every $x,y$ in $\h$ and complex number $\lambda$. Let $\langle\cdot,\cdot\rangle$ denote the Hilbert space inner product, which we assumme is conjugate linear in the second variable. We can define another $\dagger$ operation (called the \textit{conjugate adjoint}): given $f$, and by means of the Riesz representation theorem for $\h$, we let $f^{\dagger}$ be the unique antilinear operator such that
$\langle fx,y \rangle = \overline{\langle x,f^{\dagger}y \rangle}$	for every $x,y\in \h$. From the context it will be apparent if the $\dagger$ denotes the usual adjoint or the conjugate adjoint.		 It is easy to check that the composition of two antilinear operators is linear. 	An \textit{antiunitary} operator is an antilinear operator $U:\h\to \h$ such that
	\begin{equation*}
		\langle Ux,Uy \rangle = \overline{\langle x,y \rangle}
	\end{equation*}
	for every $x,y\in \h$, or, equivalently, such that $U^{-1} = U^{\dagger}$. The product of two antiunitary operators is unitary, and $U$ is antiunitary if and only if $U$ is antilinear and $\|\U\xi\|=\|\xi\|$ for all $\xi\in \h$. 
\end{defi}
It is also easy to check that, as linear operators,  antilinear operators on a Hilbert space have a (right) polar decomposition: every antilinear map $f$ can be written as $f = |f^{\dagger}|U$, where $|f^{\dagger}| = \sqrt{ff^{\dagger}}$, and $U$ is a partial antilinear isometry. $|f|$ is a positive linear operator and in case that $f$ is invertible, $U$ is antiunitary.  A good reference on the subject of antilinear and antiunitary operators is the paper by Routsalainen \cite{rout}.

\smallskip

An antiunitary operator $J$ is a \textit{conjugation} if $J^2=1$. The typical example is given by fixing an orthonormal basis $\{e_i\}_{i\in\mathbb N}$ of $\h$, and defining $J(\sum \alpha_ie_i)=\sum \overline{\alpha_i}e_i$, we call this \textit{conjugation in a basis}. The following characterizations that can be found in  \cite[pag. 194]{rout} will be useful

\begin{prop}Let $J:\h\to\h$ be antilinear and consider the conditions $J=J^{\dagger}$, $J^{\dagger}=J^{-1}$, $J^2=1$. Then any two conditions imply the third, and $J$ is a conjugation. For any conjugation $J$ there exists a basis such that $J$ is conjugation in that basis.
\end{prop}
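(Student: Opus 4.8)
The plan is to dispatch the three-way equivalence by elementary algebra and to reserve the real work for the construction of the adapted basis, which is the only substantive assertion. The underlying observation for the first part is that $J^2=1$ is equivalent to $J$ being invertible with $J^{-1}=J$. So if we assume $J^2=1$ together with either $J=J^{\dagger}$ or $J^{\dagger}=J^{-1}$, substituting $J^{-1}=J$ immediately yields the missing relation $J^{\dagger}=J=J^{-1}$; and if instead we assume $J=J^{\dagger}$ and $J^{\dagger}=J^{-1}$, then $J=J^{-1}$, and composing with $J$ gives $J^{2}=1$. In all three cases the remaining identity follows. Since $J^{\dagger}=J^{-1}$ is precisely the characterization of antiunitarity recalled above and $J^{2}=1$ is the involution condition, $J$ is by definition a conjugation.

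For the second claim I would realize $\h$ as the complexification of the real Hilbert space of $J$-fixed vectors. I set $\h_J=\{x\in\h: Jx=x\}$, a real-linear subspace that is closed because $J$, being antiunitary, is an isometry. Given arbitrary $x\in\h$, I would check that $a=\tfrac12(x+Jx)$ and $b=\tfrac{1}{2i}(x-Jx)$ both lie in $\h_J$ (using $J^2=1$ and antilinearity) and that $x=a+ib$; the decomposition is unique because $J(a+ib)=a-ib$, which forces $a=b=0$ whenever $a+ib=0$. Hence $\h=\h_J\oplus i\h_J$ as real spaces. Antiunitarity then gives $\langle a,b\rangle=\langle Ja,Jb\rangle=\overline{\langle a,b\rangle}$ for $a,b\in\h_J$, so the inner product is real-valued on $\h_J$ and $(\h_J,\langle\cdot,\cdot\rangle)$ is a genuine real Hilbert space.

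To finish, I would fix an orthonormal basis $\{e_i\}_{i\in\mathbb N}$ of $\h_J$ (available since $\h$, hence $\h_J$, is separable). These vectors are orthonormal in $\h$, and they are total: if $z=a+ib$ with $a,b\in\h_J$ satisfies $\langle z,e_i\rangle=0$ for all $i$, then reading off real and imaginary parts gives $\langle a,e_i\rangle=\langle b,e_i\rangle=0$, whence $a=b=0$ and $z=0$. Thus $\{e_i\}$ is an orthonormal basis of $\h$, and since $Je_i=e_i$ while $J$ is continuous and antilinear, $J\bigl(\sum_i\alpha_i e_i\bigr)=\sum_i\overline{\alpha_i}e_i$, i.e.\ $J$ is exactly the conjugation in this basis. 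The main obstacle is this last construction: one must verify carefully that passing to the $J$-fixed points yields a real Hilbert space whose orthonormal basis remains \emph{total} over $\mathbb C$ (via the direct-sum decomposition $\h=\h_J\oplus i\h_J$), rather than merely spanning the real form.
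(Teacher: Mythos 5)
Your proof is correct, and it is worth noting that the paper does not actually prove this proposition at all: it is quoted as a known result with a citation to Ruotsalainen \cite{rout}, so your argument supplies what the paper leaves to the reference. Both halves of your proposal are sound. The three-way implication is exactly the elementary algebra you describe (in each case the hypothesis $J^{\dagger}=J^{-1}$ or $J^{2}=1$ supplies invertibility, after which substitution closes the cycle), and your construction of the adapted basis is the standard ``real form'' argument: the fixed-point set $\h_J=\{x:Jx=x\}$ is a closed real subspace because $J$ is an isometry, the formulas $a=\tfrac12(x+Jx)$, $b=\tfrac1{2i}(x-Jx)$ give the real-linear splitting $\h=\h_J\oplus i\h_J$, antiunitarity plus $Ja=a$, $Jb=b$ forces $\langle a,b\rangle\in\R$ on $\h_J$, and an orthonormal basis of the real Hilbert space $\h_J$ is then orthonormal and total in $\h$ by the argument you give (realness of the coefficients $\langle a,e_i\rangle$, $\langle b,e_i\rangle$ is the key point, and you state it). The final identity $J\bigl(\sum_i\alpha_ie_i\bigr)=\sum_i\overline{\alpha_i}e_i$ indeed needs both continuity and antilinearity of $J$, which you invoke; continuity is free since $J$ is isometric. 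One small remark: separability of $\h$ (hence of $\h_J$) is available in the paper's setting of Section 4, but your argument works verbatim for arbitrary $\h$ if one allows an orthonormal basis indexed by an arbitrary set, so nothing essential hinges on it.
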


\subsection{The group $G(\Omega)$ and its components}

Now we can charaterize the group preserving the positive cone:

\begin{teo}\label{gomegabh}
Take $\V=\bh_{sa}$ as a JB-algebra. Then 
\begin{enumerate}
\item $g\in \GO$ if and only if there exists $f:\h\to \h$ invertible linear (or antilinear) such that $g(A)=fAf^{\dagger}$, where the dagger $\dagger$  denotes the usual adjoint (resp. the conjugate adjoint). 
\item  If $g(A)=fAf^{\dagger}$, then $g^*(A)=f^{\dagger} Af$.
\item  Assumme $g=f\cdot f^{\dagger}=h\cdot h^{\dagger}$. Then both $f,h$ are linear or both are antilinear, and  there exists $\lambda\in S^1$ such that $f=\lambda h$. 
\item The decomposition $g=\U_x k$ with $x\in \Omega$ and $k\in \Aut$ is given by $x=|f^{\dagger}|$ and $k(A)=UAU^{\dagger}$ for some unitary or antiunitary operaror $U$ in $\h$, i.e. if $f=|f^{\dagger}|U$ is the polar decomposition of $f$, then $g(A)=|f^{\dagger}|UAU^{\dagger}|f^{\dagger}|$.
\item $g=f\cdot f^{\dagger}\in \Aut$ if and only if $f$ is unitary or antiunatary.
\item The set of linear and the set of antilinear maps induce the two connected components of $\GO$, and the set of unitary and antiunitary maps induce the two connected components of $\Aut$.
\end{enumerate}
\end{teo}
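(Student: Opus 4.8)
The plan is to build everything on Proposition \ref{gigualuk}, which writes any $g\in\GO$ as $g=\U_yk$ with $y\in\Omega$ and $k\in\Aut$. Since in the associative algebra $\bh$ one has $\U_y(A)=yAy=yAy^{\dagger}$ (because $y=y^{\dagger}$), the factor $\U_y$ is already of the desired form with $f=y$ linear, so the whole of part (1) reduces to classifying $\Aut$, i.e. the Jordan automorphisms of $\bh_{sa}$. For this I would invoke the classical structure theorem: a Jordan automorphism of $\bh_{sa}$ extends to a Jordan $*$-automorphism of the factor $\bh$, and such a map is either a $*$-automorphism or a $*$-antiautomorphism; the former are $A\mapsto UAU^{\dagger}$ with $U$ unitary, while the latter are $A\mapsto UA^{T}U^{\dagger}=U(JAJ)U^{\dagger}=(UJ)A(UJ)^{\dagger}$ with $J$ a conjugation, i.e. $A\mapsto VAV^{\dagger}$ with $V=UJ$ antiunitary (cf.\ \cite{stormer}). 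Composing $k(A)=VAV^{\dagger}$ with $\U_y$ gives $g(A)=yVAV^{\dagger}y=(yV)A(yV)^{\dagger}$, so $f=yV$ is invertible and linear or antilinear according to $V$. The converse inclusion is the routine check that $A\mapsto fAf^{\dagger}$ maps $\Omega$ onto $\Omega$, its inverse being $B\mapsto f^{-1}B(f^{-1})^{\dagger}$ of the same form.

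Parts (2), (4), (5) are short consequences. For (2) I would compute directly from $g^{*}=g^{-1}\U_{g1}$: since $g(1)=ff^{\dagger}$ and $\U_{g1}(A)=ff^{\dagger}Aff^{\dagger}$, one gets $g^{*}(A)=f^{-1}(ff^{\dagger}Aff^{\dagger})(f^{\dagger})^{-1}=f^{\dagger}Af$. For (4) I would feed the polar decomposition $f=|f^{\dagger}|U$ (with $|f^{\dagger}|=\sqrt{ff^{\dagger}}\in\Omega$ and $U$ (anti)unitary) into $g(A)=fAf^{\dagger}=|f^{\dagger}|\,UAU^{\dagger}\,|f^{\dagger}|$, recognizing $x=|f^{\dagger}|$ and $k=\Ad_{U}$; uniqueness of the factors is forced by $g(1)=x^{2}$ exactly as in Proposition \ref{gigualuk}. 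For (5), by Lemma \ref{auto1} we have $g\in\Aut$ iff $g(1)=ff^{\dagger}=1$, which for invertible $f$ is precisely $f^{\dagger}=f^{-1}$, i.e.\ $f$ unitary or antiunitary.

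For the rigidity in part (3) I would set $B=h^{-1}f$, so that $fAf^{\dagger}=hAh^{\dagger}$ for all $A\in\bh_{sa}$ becomes $BAB^{\dagger}=A$ for all such $A$; testing $A=1$ gives $BB^{\dagger}=1$, so $B$ is unitary or antiunitary and $B^{\dagger}=B^{-1}$, whence $BA=AB$ for every self-adjoint $A$. If exactly one of $f,h$ is antilinear then $B$ is antilinear, and writing $B=WJ$ the relation $BA=AB$ becomes $W\overline{A}W^{-1}=A$, i.e.\ the transpose antiautomorphism would coincide with the conjugation $A\mapsto W^{-1}AW$; since one reverses and the other preserves products this forces $\bh$ to be commutative, a contradiction, so $f,h$ are of the same type. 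If both are linear (resp.\ both antilinear) then $B$ is linear and commutes with every self-adjoint operator, hence is central in $\bh$, so $B=\lambda$ is scalar with $|\lambda|=1$, giving $f=\lambda h$.

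Finally, for part (6) I would first note $\GO_0=\Str_0$ because the inclusions $\Str_0\subset\GO\subset\Str$ are open, and that by Corollary \ref{ref:decompUAut} every element of $\Str_0$ is $\U_xk$ with $x\in\Omega$, $k\in\Aut_0$. On the automorphism side let $P=\{\Ad_U:U\in\mathcal U(\h)\}$ and $Q=\{\Ad_V:V\text{ antiunitary}\}$. Since $\mathcal U(\h)$ is connected, $P$ is connected, and writing any antiunitary as $V=WV_0$ with $W$ unitary and $V_0$ a fixed conjugation shows $Q=P\cdot\Ad_{V_0}$ is a connected coset; by parts (3) and (5) we have $\Aut=P\sqcup Q$ with $P\cap Q=\emptyset$. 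As $\Aut$ is a Banach-Lie group its identity component is open, connected and contains $\Ad_1\in P$, so it equals $P$; thus $Q$ is the remaining component and $\Aut$ has exactly two. Transporting through the retraction of Theorem \ref{retract}, $\GO$ has two components as well, which by the polar argument of part (4) are exactly $\GO_0=\Str_0$, the linear-type maps (the image of the connected group $\gl(\h)$), and its coset of antilinear-type maps. The main obstacle is the hard direction of part (1): it rests on the external classification of Jordan automorphisms of the factor $\bh$, together with the infinite-dimensional connectedness of $\mathcal U(\h)$ and $\gl(\h)$ needed to pin down the components.
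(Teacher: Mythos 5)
Parts (1)--(5) of your proposal are sound and follow essentially the paper's own route: decompose $g=\U_yk$ via Proposition \ref{gigualuk}, then classify $\Aut$ by an external theorem (the paper cites the triple-product preserver theorem of An--Hou; you cite the dichotomy for Jordan $*$-automorphisms of the factor $\bh$; these are of comparable depth and both legitimate). Your argument for part (3) is actually cleaner than the paper's: where the paper shows by an explicit rank-one computation that an antiunitary $T$ commuting with $\bh_{sa}$ would be a conjugation and then exhibits a concrete $A$ with $TA\neq AT$, you extend the relation $\overline{A}=W^{-1}AW$ complex-linearly to all of $\bh$ and observe that a $*$-antiautomorphism cannot coincide with an inner automorphism unless $\bh$ is commutative. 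Both work; yours avoids the basis computation.

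The genuine gap is in part (6), at the sentence ``As $\Aut$ is a Banach-Lie group its identity component is open, connected and contains $\Ad_1\in P$, so it equals $P$.'' From the connectedness of $P$ and $\Ad_1\in P$ you only get $P\subseteq\Aut_0$; nothing you have proved prevents $\Aut_0$ from meeting $Q$ as well, in which case $\Aut_0\cup Q$ would be connected and $\Aut$ would be connected, with $P\subsetneq\Aut_0=\Aut$. A decomposition of a space into two disjoint connected subsets does not force two components: $[0,1]=[0,\nicefrac{1}{2}]\sqcup(\nicefrac{1}{2},1]$. What is missing is a proof that $P$ and $Q$ are topologically separated in $\Aut$ --- equivalently that $P$ (a subgroup of index two) is open, or that both $P$ and $Q$ are closed. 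This is precisely where the paper works hardest: it shows that if $\Ad_{U_n}\to\Ad_U$ in the norm of $\bv$, then evaluating on rank-one projections gives $(U_n\xi)\otimes(U_n\xi)\to(U\xi)\otimes(U\xi)$, hence $e^{i\theta(n,\xi)}U_n\xi\to U\xi$ for suitable phases, so the linear/antilinear type of the implementing operator survives limits; consequently a continuous path from $Id$ to an antiunitarily implemented automorphism would yield, at $t_0=\inf\{t: U_t\ \text{antilinear}\}$, an operator that is simultaneously linear and antilinear, a contradiction. (Alternatively, openness of $P$ follows from the explicit local cross section of Theorem \ref{levantada}, but that theorem is proved after, and partly from, the present one, so you cannot invoke it here.) Until this separation is supplied, the two-component claim for $\Aut$ --- and hence for $\GO$ via Theorem \ref{retract} --- remains unproven.
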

\begin{proof}
Clearly each such map $A\mapsto fAf^{\dagger}$ is bounded and real linear, and preserves $\Omega$: if $A>0$ then $fAf^{\dagger}>0$. This is trivial if $f$ is linear but also true if $f$ is antilinear, since 
$$
\langle fAf^{\dagger}\xi,\xi\rangle= \overline{\langle A^{1/2} f^{\dagger}\xi , A^{1/2} f^{\dagger}\xi\rangle}=\|A^{1/2} f^{\dagger}\xi\|^2>0.
$$ 
Now assumme that $g\in \GO$, then $g=\U_Xk$ for some $X>0$ and $k$ an automorphism of $\V$. Then $k\U_Ak^{-1}=\U_{kA}$ for any $A\in\V$, thus 
$$
k(ABA)=k(A)k(B)k(A)\qquad \forall\, A,B\in \V.
$$
It is well-known that then there exists a unitary or antiunitary operator $U$ such that $k(A)=UAU^{\dagger}$ for any $A\in\V$ (see  for instance \cite[Theorem 3.2]{anhou} for a proof), where $\dagger$ is the usual adjoint or the conjugate adjoint. Thus $g(A)=X UAU^{\dagger} X=fAf^{\dagger}$ if we let $f=XU$. This proves the first assertion. Now recall that $g^*=g^{-1}\U_{g1}$ thus in this case we have $g(1)=ff^{\dagger}=|f^{\dagger}|^2$ and then
$$
g^*(A)=g^{-1}(ff^{\dagger} Aff^{\dagger})=f^{-1}ff^{\dagger}Aff^{\dagger}(f^{-1})^{\dagger}=f^{\dagger} Af.
$$
Assumme now that $fAf^{\dagger}=h Ah^{\dagger}$ for every $A\in \bh$, taking $A=1$ we see that $|f^{\dagger}|=|h^{\dagger}|$, then by polar decomposition we have $|f^{\dagger}|U AU^{\dagger}|f^{\dagger}|=|f^{\dagger}| W AW^{\dagger}|f^{\dagger}|$ thus $UAU^{\dagger}=WAW^{\dagger}$ for any $A\in \bh_{sa}$, equivalently $W^{-1}U A=A W^{-1}U$. If $f$ is linear and $h$ is antilinear (or viceversa) then $T=W^{-1}U$ is antiunitary, and $TA=AT$ for all $A\in \bh_{sa}$. In particular for any $\xi\in \h$ we have
\begin{equation}\label{txi}
\|\xi\|^2 T\xi=T \langle \xi,\xi\rangle\xi =T(\xi\otimes\xi)\xi=(\xi\otimes\xi)T\xi=\langle T\xi,\xi\rangle\xi,
\end{equation}
and since $T$ is an isometry,  $\|\xi\|^2 \|\xi\|= |\langle T\xi,\xi\rangle| \|\xi\|$, that is $|\langle T\xi,\xi\rangle|=\|\xi\|^2$ for all $\xi\in\h$. If we apply $T$ on both sides of (\ref{txi}) and then multiply by $\|\xi\|^2$, we obtain
$$
 \|\xi\|^2 \|\xi\|^2 T^2\xi=\overline{\langle T\xi,\xi\rangle}\|\xi\|^2T\xi=|\langle T\xi,\xi\rangle|^2 \xi= \|\xi\|^4 \xi.
$$
Hence $T^2=1$ and by the previous proposition, $T$ is a conjugation, in particular a conjugation in a basis $\{e_i\}_i$. Let $A=i e_1\otimes e_2-i e_2\otimes e_1\in \V$, then $ATe_1=Ae_1=ie_2$ and on the other hand $TAe_1=T(-ie_2)=-e_1\ne ATe_1$, a contradition.  Thus it must be that both $f,h$ are linear or both are antilinear, then $U,W$ are both unitary or both antiunitary, thus $W^{-1}U$ is unitary. We see that $W^{-1}U$ is in the center of $\bh$, which is $\mathbb C 1$, hence $W=e^{i\theta} U$, thus $f=e^{i\theta}h$.

The fourth and fifth assertions are apparent from the previous discussions. Now note that if $k=U\cdot U^{\dagger}$ with $U$ a unitary operator in $\bh$, then there exists skew-adjoint $Z\in \bh$ such that $U=e^Z$. Let $k_t=e^{tZ}\cdot e^{-tZ}$. Then $k_t\subset \Aut$, $k_0=id$ and $k_1=k$, thus $k\in \Aut_0$ (all unitaries belong to the same component). Assumme that $k_n=U_n \cdot U_n^*\to_n k=U \cdot U^*$. In particular $U_n(\xi\otimes\xi)U_n^{\dagger}\to U(\xi\otimes \xi)U^{\dagger}$ for each $\xi\in \h$, thus $(U_n\xi)\otimes (U_n\xi)\to (U\xi)\otimes (U\xi)$ and this is only possible (since $U_n,U$ are isometries) if there exists $\theta(n,\xi)\in [0,2\pi]$ such that  $e^{i\theta(n,\xi)}U_n\xi\to U\xi$. In particular, if all the $U_n$ are linear, then $U$ must be linear, and if all the $U_n$ are antilinear, $U$ must be antilinear. 

Now let $k_t:[0,1]\to  \Aut$ be a continuous path joining $Id$ with  $g= U \cdot U^{\dagger}$ for an antiunitary operator $U$. Take $t_0 = \inf \{t: U_t\text{ is antilinear}\}$, note that $t_0>0$. Let $U_n=U_{t_0-1/n}$, then from the continuity of $k_t$ and the previous discussion, we see that $U_{t_0}$ is linear, thus unitary. But if we take $U_n=U_{t_0+1/n}$ we see that $U_{t_0}$ is also antiunitary, and this is impossible. Thus there is no such path and the antiunitaries belong to a different component than the unitaries. On the other hand if $U$ is antiunitary and $J$ is complex conjugation in a fixed orthonormal basis, then $JU$ is unitary thus $JU=e^{Z}$ for some linear $Z^{\dagger}=-Z$, and we can join $k=U\cdot U^{\dagger}$ with $j=J\cdot J$ with a continuous path. In particular all antiunitaries belong to the same component, showing that the sets of unitaries and antiunitaries induce the two components of $\Aut$. The assertion for $\GO$ is now apparent from Theorem \ref{retract}, previous remarks and the polar decomposition.
\end{proof}

\begin{rem}The main tool used in the previous characterization is the theorem stating that a map preserving the triple product, must be of the prescribed form. However, it is not clear how does $U$ varies as $k=U\cdot U^{\dagger}$ varies, for instance if $t\mapsto k_t$ is a continuous (or smooth) map into $\Aut$, and we represent $k_t=U_t\cdot U_t^{\dagger}$, is the map $t\mapsto U_t$ continuous (or smooth)? Since there is some ambiguity (the factor $\lambda\in S^1$), can we pick $U$ adequately so that it is well-behaved? Next we show that it is possible, using a well-known trick that exhibits the unitary.
\end{rem}

\medskip

First we need a quick remark: note that if $p^2=p=p^{\dagger}\in\bh$ then $\varepsilon_p=2p-1$ is a symmetry, i.e. $\varepsilon_p=\varepsilon_p^{-1}=\varepsilon_p^{\dagger}$; in particular $\varepsilon_p$ is unitary and if $q$ is another projection with $\|q-p\|_{\infty}<1$ then 
$$
\|\varepsilon_p \varepsilon_q-1\|_{\infty}=\|\varepsilon_p-\varepsilon_q\|_{\infty}=2\|p-q\|<2
$$
thus $\varepsilon_p\varepsilon_q$ has an analytic logarithm $Z$ in $\bh$, which is skew-adjoint and depends smoothly on $q$; moreover it in not hard to see that $\varepsilon_pe^{Z}= e^{-Z}\varepsilon_p$ since $Z$ is $p$-codiagonal. Recall also that the unitary group $\mathcal U(\h)$ is a Banach-Lie embedded subgroup of $\bh$ with the uniform norm.

\begin{teo}\label{levantada}
Let $k\in \Aut$. Fix a unit norm $\xi\in \h$, let $p=\xi\otimes \xi$ be its one-dimensional projection.
\begin{enumerate}
\item Assumme that $\|k-1\|<1$. Let $Z$ be the linear skew-adjoint operator given by
$$
Z(k)=1/2\ln(\varepsilon_{k(p)}\varepsilon_p)=1/2\ln((2k(p)-1)(2p-1)).
$$
Then $k=e^{Z(k)}W(k)\,\cdot \,W(k)^{\dagger}e^{-Z(k)}$ with unitary $W(k)$, where for each $\eta\in\h$ 
$$
W(k)\eta =e^{-\ad Z(k)}k^{\mathbb C}(\eta\otimes\xi)\xi=e^{-Z(k)}k^{\mathbb C}(\eta\otimes\xi)e^{Z(k)}\xi
$$
(here $k^{\mathbb C}$ is the complexification  $k(A+iB)=kA+ikB$ for $A,B\in\V$).
\item Let $J$ be a conjugation and $j=J\cdot J$ such that $\|k-j\|<1$, then $k=Je^{Z(k)}W(k)\cdot W(k)^{\dagger}e^{-Z(k)}J$ for the same maps $Z,W$ as above. 
\item The map  $s:\{k\in \Aut: \|k-1\|<1\}\to \mathcal U(\h)$ given by $s:k\mapsto e^{Z(k)}W(k)$ is smooth, moreover it is real analytic.
\end{enumerate}
\end{teo}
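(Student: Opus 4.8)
The plan is to manufacture, out of the intrinsic data of $k$, an honest unitary (resp. antiunitary) representative, thereby exhibiting a smooth section that resolves the $S^1$-ambiguity recorded in Theorem~\ref{gomegabh}. The geometric heart is that $e^{Z(k)}$ should be the \emph{rotation} carrying the reference projection $p=\xi\otimes\xi$ onto $k(p)$, after which the operator $W(k)$ is engineered precisely to absorb the residual phase, so that it fixes $\xi$.

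First I would settle item (1). Since $\|k(p)-p\|\le\|k-1\|<1$, the preliminary remark gives $\|\varepsilon_{k(p)}\varepsilon_p-1\|=2\|k(p)-p\|<2$, so the principal logarithm exists, is skew-adjoint, and $Z(k)$ is well defined; the same remark shows $Z(k)$ is $p$-codiagonal, i.e. $\varepsilon_p e^{Z(k)}=e^{-Z(k)}\varepsilon_p$. Combining this with $e^{2Z(k)}=\varepsilon_{k(p)}\varepsilon_p$ one computes $\varepsilon_{k(p)}=e^{Z(k)}\varepsilon_p e^{-Z(k)}$, that is
\begin{equation*}
k(p)=e^{Z(k)}\,p\,e^{-Z(k)}.
\end{equation*}
Now fix \emph{any} representative $k(A)=UAU^\dagger$ with $U$ unitary (Theorem~\ref{gomegabh}; the ball $\|k-1\|<1$ keeps $k$ in the identity, hence linear, component) and set $V=e^{-Z(k)}U$. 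The displayed identity reads $VpV^\dagger=p$, so $V$ commutes with $p$ and $V\xi=\mu\xi$ for a unique $\mu\in S^1$. Using $k^{\mathbb{C}}(\eta\otimes\xi)=(U\eta)\otimes(U\xi)$ and $(a\otimes b)\zeta=\langle\zeta,b\rangle a$, the defining formula collapses to $W(k)\eta=\langle e^{Z(k)}\xi,U\xi\rangle\,e^{-Z(k)}U\eta=\overline{\mu}\,V\eta$, whence $W(k)=\overline\mu V$ is unitary with $W(k)\xi=\xi$. Finally $e^{Z(k)}W(k)=\overline\mu U$, the two outer phases cancel, and
\begin{equation*}
\big(e^{Z(k)}W(k)\cdot W(k)^\dagger e^{-Z(k)}\big)(A)=U A U^\dagger=k(A).
\end{equation*}
Since the formula for $W(k)$ never refers to $U$, the construction is canonical.

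For item (2) I would reduce to (1). Writing $j=J\cdot J$ one has $j\circ j=\mathrm{id}$ and $j$ is an isometry of $\V$, so $jk:=j\circ k$ is an automorphism with $\|jk-1\|=\|j\circ(k-j)\|=\|k-j\|<1$; if $k$ is represented by an antiunitary $U$, then $jk$ is represented by the unitary $JU$, placing us in the hypotheses of (1). Applying (1) to $jk$ and then composing on the left with $j$ — which conjugates $V\cdot V^\dagger$ into $(JV)\cdot(JV)^\dagger$ — gives $k=j\circ(jk)=J\,e^{Z}W\cdot W^\dagger e^{-Z}\,J$ with $Z=Z(jk)$, $W=W(jk)$; this is the meaning of ``the same maps $Z,W$'' read in the antilinear chart. (Note $Z(k)$ itself need not be defined, since $k(p)$ is close to $j(p)$ rather than to $p$, which is exactly why one passes through $jk$.)

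Finally, item (3) amounts to recognising $s=e^{Z(\cdot)}W(\cdot)$ as a composition of real-analytic maps. The assignment $k\mapsto k(p)$ is bounded linear, $k(p)\mapsto\varepsilon_{k(p)}$ affine, and $(\cdot)\mapsto\varepsilon_{k(p)}\varepsilon_p$ bilinear; on the open set of unitaries whose spectrum avoids $-1$ the principal logarithm is given by a convergent holomorphic functional calculus, hence is real-analytic, and composing with the entire map $\exp$ shows $k\mapsto e^{\pm Z(k)}$ is real-analytic into $\bh$. Likewise $k\mapsto k^{\mathbb{C}}$ is $\mathbb{R}$-linear and bounded, while $T\mapsto\big(\eta\mapsto e^{-Z(k)}T(\eta\otimes\xi)e^{Z(k)}\xi\big)$ depends analytically on $Z(k)$ and linearly on $T$, so $k\mapsto W(k)\in\bh$ is real-analytic; therefore $s$ is real-analytic, a fortiori smooth, into $\mathcal U(\h)$ (an embedded Banach-Lie subgroup). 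I expect the main obstacle to be the phase-resolution computation in (1) — checking that the intrinsically defined $W(k)$ equals $\overline\mu V$, and is thus unitary and independent of the chosen $U$ — with the analyticity of the operator logarithm in (3) being the chief technical, though routine, point.
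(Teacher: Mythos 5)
Your proof is correct, and for items (2) and (3) it essentially coincides with the paper's: your reading of ``the same maps $Z,W$'' in (2) as $Z(jk),W(jk)$ is exactly what the paper's proof does (and your parenthetical explaining why $Z(k)$ itself need not exist is a point the paper leaves unsaid), while in (3) your observation that $k\mapsto k^{\mathbb C}(\bullet\otimes\xi)$ is a bounded linear map into $\B(\h,\bh)$, hence real analytic, is a clean shortcut past the paper's chartwise Taylor-series verification of the same fact. Where you genuinely diverge is the verification of (1). The paper argues intrinsically: it sets $\lambda_k=e^{-\ad Z(k)}k^{\mathbb C}$, uses multiplicativity of $\lambda_k$ to show by hand that $W(k)$ is invertible (exhibiting the inverse $\eta\mapsto \lambda_k^{-1}(\eta\otimes\xi)\xi$) and isometric (evaluating $\lambda_k$ on $(\xi\otimes\eta)(\eta\otimes\xi)=\|\eta\|^2p$), and then proves the intertwining $W(k)X=\lambda_k(X)W(k)$, so that $\lambda_k=W(k)\cdot W(k)^{\dagger}$; no phase ever appears. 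You instead fix a unitary representative $U$ of $k$, note that $V=e^{-Z(k)}U$ commutes with $p$ so $V\xi=\mu\xi$ with $\mu\in S^1$, and collapse the defining formula to $W(k)=\overline{\mu}V$; unitarity, the normalization $W(k)\xi=\xi$, and $e^{Z(k)}W(k)=\overline{\mu}U$ then give the implementation identity at once. Your route is shorter and makes the phase-resolution mechanism completely explicit; the paper's route establishes unitarity of $W(k)$ without naming any representative, via the standard device of evaluating a multiplicative $*$-map on rank-one operators. Be aware, though, that both arguments lean on the same tacit input: that $\|k-1\|<1$ forces $k$ to be implemented by a \emph{unitary} rather than an antiunitary (the paper buries this in ``by the previous theorem''; your parenthetical about the identity component asserts it without proof). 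The claim is true, but to close it one should check that any antiunitarily implemented automorphism lies at distance at least $1$ from the identity: for an antiunitary $U$, the equation $\langle U\eta,\eta\rangle=0$ is a homogeneous complex quadratic in the coefficients of $\eta$ over a two-dimensional subspace, hence has a unit-vector solution $\eta$, and then $k$ sends the projection $\eta\otimes\eta$ to an orthogonal one, so $\|k-1\|\ge 1$.
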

\begin{proof}
Since $k(p^2)=k(p)^2$ we see that $k(p)$ is an orthogonal projection. Now $\|k(p)-p\|_{\infty}\le \|k-1\| \,\|p\|_{\infty}<1$, thus taking $Z$ as described gives a linear skew-adjoint operator depending smoothly on $k$ such that $e^{Z(k)}pe^{-Z(k)}=k(p)$ (see for instance \cite[Proposition 3.1]{ancomb}). Let  
$$
\lambda_k=e^{-\ad Z(k)}k^{\mathbb C},
$$
then $\lambda_k(p)=p$, and since $k^{\mathbb C}=U_k\,\cdot\, U_k^{\dagger}$ for some unitary operator by the previous theorem, we see that $\lambda_k(AB)=\lambda_k(A) \lambda_k(B)$ for any $A,B\in\bh$. Let $W(k)$ be as in the formula above, then it is clear that it is bounded linear. Let's see that $W(k)$ is unitary: first note that if we put 
$$
V(k)\eta=\lambda_k^{-1}(\eta\otimes \xi)\xi
$$
it is easy to check that $V(k)$ is the left and right inverse of $W(k)$, so $W(k)$ is invertible. Now note that $(\xi\otimes \eta)(\eta\otimes\xi)=\|\eta\|^2 \xi\otimes\xi=\|\eta\|^2p$, thus
\begin{align*}
\|W(k)\eta\|^2&=\langle (\lambda_k(\eta\otimes\xi))^{\dagger}\lambda_k(\eta\otimes\xi)\xi,\xi\rangle=\langle\lambda_k((\eta\otimes\xi)(\xi\otimes\eta))\xi,\xi\rangle\\
&=\|\eta\|^2 \langle \lambda(k)(p)\xi,\xi\rangle=\|\eta\|^2 \langle p\xi,\xi\rangle =\|\eta\|^2\langle \xi,\xi\rangle=\|\eta\|^2,
\end{align*}
showing that $W(k)$ is an isometry, thus it must be unitary. We now claim that $\lambda$ is implemented by $W$:  to prove it we compute
\begin{equation}
W(k)X\eta=\lambda_k( (X\eta)\otimes \xi)\xi=\lambda_k(X \cdot\eta\otimes \xi)\xi=\lambda_k(X)\lambda_k(\eta\otimes\xi)\xi=\lambda_k(X)W(k)\eta,
\end{equation}
thus $W(k)X=\lambda_k(X)W(k)$ and $\lambda_k=W(k)\cdot W(k)^{\dagger}$ as claimed. Then $k^{\mathbb C}=e^{\ad Z(k)}\lambda_k=e^{\ad Z(k)}W(k)\,\cdot\, W(k)^{\dagger}$, and we have proved the first assertion. Now note that if $\|k-j\|<1$ then $\|jk-1\|<1$ and $jk$ can be represented as above, an the second claim follows. For the third claim, consider the map $k\mapsto k(\bullet \otimes \xi)$. We claim that it is real analytic as a map form the Lie group $\Aut$ into the Banach space $\B(\h, \bh)$. Let $k\in \Aut$ with $\|k-1\|<1$; since $\Aut$ is a Banach-Lie subgroup of $\glv$, with Banach-Lie algebra $\der$ (Theorem \ref{autisbanachlie}),  we can use $H\mapsto ke^H$ as a chart of $\Aut$ around $k$, for sufficiently small $H\in \der$. We have
\begin{align*}
&\|ke^H(\bullet \otimes\xi)-\sum_{n=0}^N k\frac{H^n}{n!}(\bullet\otimes \xi)\|=\sup_{\|\eta\|=1} \|e^H(\eta\otimes\xi)-\sum_{n=0}^N \frac{H^n}{n!}(\eta\otimes \xi)\| \\
& \le  \|e^H-\sum_{n=0}^N\frac{H^n}{n!}\|\,\sup_{\|\eta\|=1} \|\eta\otimes\xi\| \le  \|e^H-\sum_{n=0}^N\frac{H^n}{n!}\|.
\end{align*}
Now the last term converges to $0$ as $N\to\infty$, and this computation shows that $\sum_{n=0}^N k\frac{H^n}{n!}(\bullet\otimes \xi)$ is the Taylor polynomial of our map, and it converges uniformly to it, so our map is real analytic. Now $k\mapsto Z(k)$ is real analytic, so is $e^{Z(k)}$ and the product  in the Banach-Lie group $\mathcal U(\h)$, we have that $k\mapsto F(k)=e^{Z(k)}k(\bullet\otimes\xi)e^{-Z(k)}$ is real analytic. It is then apparent that $W(k)=ev_{\xi}(F(k))=F(k)\xi$ is real analytic.
\end{proof}

\begin{rem}If we modify $W$ above with $U(k)=\lambda (k)W(k)$ with a non-continuous function $\lambda:\Aut\to S^1$, we see that it is possible that $t\mapsto k_t=U_t\cdot U_t^{\dagger}$ is a smooth path while $t\mapsto U_t$ is not even continuous.
\end{rem}

\begin{rem}[$\Aut$ as an homogeneous manifold of the unitary group $\mathcal U(\h)$]
Consider the action $A:\mathcal U(\h)\times \Aut\to \Aut$ given by $\U\cdot k=A(U,k)=UkU^{\dagger}=\Ad_U k$. This action is smooth and transitive by Theorem \ref{gomegabh}. Moreover by the same theorem if we fix an orthonormal basis of $\h$, and let $J$ be the antilinear conjugation in that basis, we see that $\Aut$ is the disjoint union of the two open-closed orbits 
$$
\mathcal O(Id)=\mathcal U(\h)\cdot Id=\Aut_0 \quad\textrm{  and } \quad \mathcal O(j)=\mathcal U(\h)\cdot j=\Ad_J\Aut_0,
$$
where $j=\Ad_J=J\cdot J\in \Aut$. That is  $\Aut= \mathcal O(Id)\sqcup \mathcal O(\Ad_J)$. Note that the isotropy group for both orbits is $K=S^1 1$ by Theorem \ref{gomegabh}. It is also apparent from Theorem \ref{levantada} that if we let $s(k)=e^{Z(k)}W(k)$ for an automorphism close to $1$, then 
\end{rem}

\begin{teo}
The action $\mathcal U(\h)\curvearrowright\Aut$ has smooth local cross-sections: for any $k\in \Aut$ there exists an open neighbouhood $V$ of $k$ and a smooth map $s:V\to \mathcal U(\h)$ such that $s(k)=1$, $\Ad_{s(k)}=id_V$.
\end{teo}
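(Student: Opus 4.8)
The plan is to reduce the statement, for an arbitrary $k\in\Aut$, to the smooth lift near the identity already produced in Theorem~\ref{levantada}, by right-translating the problem to a neighbourhood of $1\in\Aut$. Recall that the action is $U\cdot\ell=\Ad_U\circ\ell$, so the orbit map through the base point $k$ is $o_k\colon\mathcal U(\h)\to\Aut$, $o_k(U)=\Ad_U\circ k$; a smooth local cross-section is then a smooth $s\colon V\to\mathcal U(\h)$, defined on an open $V\ni k$, with $s(k)=1$ and $o_k\circ s=\mathrm{id}_V$, i.e. $\Ad_{s(\ell)}\circ k=\ell$ for all $\ell\in V$. I would emphasize that the genuine content is already contained in Theorem~\ref{levantada}: writing $s_0(m)=e^{Z(m)}W(m)$ for the real analytic map of part (3), defined on $\{m\in\Aut:\|m-1\|<1\}$, part (1) gives $m=s_0(m)\cdot s_0(m)^{\dagger}=\Ad_{s_0(m)}$ for every such $m$, and a direct evaluation (using $\varepsilon_{1(p)}\varepsilon_p=\varepsilon_p^2=1$ and $W(1)\eta=(\eta\otimes\xi)\xi=\eta$) yields $Z(1)=0$, $W(1)=1$, hence $s_0(1)=1$.

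Next I would set $V=\{\ell\in\Aut:\|\ell\circ k^{-1}-1\|<1\}$ and define $s(\ell)=s_0(\ell\circ k^{-1})$. Since $\Aut$ is an embedded Banach--Lie subgroup of $\glv$ (Theorem~\ref{autisbanachlie}), right translation $R_{k^{-1}}\colon\ell\mapsto\ell\circ k^{-1}$ is real analytic and sends $k$ to $1$; hence $V$ is open, contains $k$, and $\ell\circ k^{-1}$ lies in the domain of $s_0$ for every $\ell\in V$ (note $\ell\circ k^{-1}\in\Aut_0$, being in the component of $1$, so part (1) applies directly). Consequently $s=s_0\circ R_{k^{-1}}$ is real analytic, in particular smooth, with $s(k)=s_0(1)=1$, and for every $\ell\in V$
\begin{equation*}
\Ad_{s(\ell)}\circ k=\Ad_{s_0(\ell\circ k^{-1})}\circ k=(\ell\circ k^{-1})\circ k=\ell,
\end{equation*}
so that $o_k\circ s=\mathrm{id}_V$. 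This is exactly the asserted smooth local cross-section through $k$.

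The construction is immediate once Theorem~\ref{levantada} is available, so there is no substantial obstacle remaining here; the only point that must be checked with some care is that right translation by $k^{-1}$ carries a neighbourhood of $k$ into the domain $\{\|m-1\|<1\}$ of $s_0$ and is itself smooth, which follows from $\Aut$ being a Banach--Lie group in the norm topology. I would close by noting that the isotropy of $k$ is exactly $S^1\cdot 1$ (by Theorem~\ref{gomegabh}), so these local sections equip the orbit map $o_k$ with the structure of a smooth principal $S^1$-bundle $\pi\colon\Aut\to\mathcal U(\h)$, as announced.
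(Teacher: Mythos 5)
Your proposal is correct and rests on essentially the same argument as the paper: all of the substance is Theorem \ref{levantada}, and the passage from the section $s_0(m)=e^{Z(m)}W(m)$ near the identity to a section near an arbitrary $k$ is the translation argument that the paper leaves implicit. The only minor difference is that by right-translating inside $\Aut$ by $k^{-1}$ you treat both connected components uniformly using part (1) alone, whereas the paper covers the component of $\Ad_J$ via part (2) (composition with the conjugation $j$); this is a slight streamlining rather than a genuinely different route.
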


Thus in particular the maps $\pi_{Id}: U\mapsto \Ad_U$ and $\pi_{j}: U\mapsto j\Ad_U$ are smooth  open projections, and 
$$
1\rightarrow S^1\rightarrow \mathcal U(\h) \rightarrow \Aut\rightarrow 1
$$
is a smooth principal bundle with structure group $S^1=\mathcal U(1)$ (see \cite[Chapter 3]{schotten} for applications to quantization).

\subsection{Derivations, the structure group and its Lie algebra}

From the fact that $\U_A(B)=ABA$ for all $A,B\in\B$, we derive that $\U_{X,Y}(A)=1/2(XAY+YAX)$, therefore the condition for being in the Lie algebra of the structure group is: $H\in \B(\bh)$ is in $\str$ if and only if there exists $\overline{H}\in \B(\bh)$ such that 
\begin{equation}\label{strbh}
X A H(X)+ H(X)AX= H( XAX)-X\overline{H}(A)X
\end{equation}
for all $A,X\in \bh_{sa}$. On the other hand the condition for $g\in \Str$ in this case can be written as 
$$
g(X)Ag(X)=g(Xg^{-1}( g(1)Ag(1) ) X), 
$$
therefore differentiating $g_t\subset \Str$ at $t=0$, if $g_0=Id$ and $g_0'=H$, we have that 
$$
H(X)AX+XAH(X)=H(XAX)-XH(A)X+XH(1)AX+XAH(1)X,
$$
thus $\overline{H}(A)=H(A) -H(1)A-AH(1)=H-2\U_{H1,1}$ as we mentioned before.

\begin{rem}[Derivations]
If $Z\in \bh$ is skew-adjoint, let $H=\overline{H}=\ad Z$, then $H$ maps $\V=\bh_{sa}$ into itself and it is bounded there. From
$$
XA[Z,X]+[Z,X]AX=XAZX-XAXZ+ZXAZ-XZAX=[Z,XAX]-X[Z,A]X
$$
we have that $H,\overline H\in \bv$ obey equation (\ref{strbh}), thus $H=\ad Z\in \str$ and $\overline{H}=H$. Since $H(1)=[Z,1]=0$, we conclude that $H\in \der$. On the other hand, it was shown in  \cite{semrl} that any complex derivation $\delta$ in $\bh$ must be of the form $X\mapsto XT-TX$ for some bounded linear $T$, thus by complexiying a derivation $D\in \der$ we see that
$$
\der=\{\ad Z: Z\in \bh, Z^{\dagger}=-Z\}. 
$$
Here is a different proof of this equality: take $k_t\subset \Aut$ such that $k_0 = Id$ and $k_0' = D\in\der$. Abusing notation, let $k_t$ denote also the complexification of $k_t$, then $k_0'$ is the complexification of $D$. Now for each $\eta\in\h$, by the previous theorem we can write 
\begin{equation}\label{WX}
W_tX\eta=\lambda_t( (X\eta)\otimes \xi)\xi=\lambda_t(X \cdot\eta\otimes \xi)\xi=\lambda_t(X)\lambda_t(\eta\otimes\xi)\xi=\lambda_t(X)W_t\eta,
\end{equation}
where $\lambda_t=e^{-\ad Z_t}k_t^{\mathbb C}$. Since  $W_t\eta$ is smooth, it defines a skew-adjoint operator $W_0'$ by means of $W_0'\eta=(W_t\eta)'|_{t=0}$ for each $\eta\in \h$. If we differentiate (\ref{WX}) at $t=0$, and we get
\begin{align*}
W_0' X\eta & =\lambda_0'(X) W_0\eta+ \lambda_0(X)W_0'\eta=-\ad Z_0'(X) (\eta\otimes\xi)\xi +k_0'(X)(\eta\otimes\xi)\xi  +XW_0'\eta\\
& =-\ad Z_0'(X)\eta+ D^{\mathbb C}(X)\eta+XW_0'\eta.
\end{align*}
Thus $D^{\mathbb C}(X)=[Z_0',X]+[W_0',X]$, and if we define $Z=Z_0'+W_0'$, we have that $Z^{\dagger}=-Z$, that  $D^{\mathbb C}=\ad Z$ and then $D=\ad Z$ also.
\end{rem}

\begin{rem}[One-parameter groups]
If $U_t=e^{tZ}\subset \mathcal U(\h)$ is a one-parameter group, it is apparent that $k_t=\U_t \cdot \U_t^{\dagger}\subset \Aut$ is also a one-parameter group. The converse holds for our local cross-section: if $k_t=e^{tD}$ is a one-parameter group of automorphims of $\V$, then we now know that $D=\ad Z$ for some skew-adjoint $Z$, and using the formulas we see that $Z_t=Z(k_t)=tZ$. From there  the lift to $\mathsf \U(\h)$ of $k_t$ is simply $s_t=S(k_t)=e^{tZ}$. Rephrasing: if $D=\ad Z\in\der$ then the cross-section gives $s(e^D)=e^Z$.
\end{rem}

\smallskip

A characterization of the Lie algebra of the structure group is also at hand:

\begin{defi}For $T\in\bh$ we denote $\ell_T(X)=TX$ and $r_T(X)=XT$ for $X\in \bh$, that is $\ell$ and $r$ are left and right multiplication in the associative algebra $\bh$.
\end{defi}

\begin{coro} If $\V=\bh_{sa}$ as a JB-algebra, then $\str=\{\ell_T+r_{T^{\dagger}}: T\in \bh\}$. 
\end{coro}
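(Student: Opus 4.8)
The plan is to read the statement off directly from the decomposition $\str = \mathbb{L} \oplus \der$ obtained in Section \ref{s3}, combined with the two explicit descriptions now available for $\V = \bh_{sa}$: namely $\mathbb{L} = \{L_v : v \in \bh_{sa}\}$ and, from the preceding Remark on derivations, $\der = \{\ad Z : Z \in \bh,\ Z^{\dagger} = -Z\}$. First I would rewrite both building blocks in terms of $\ell$ and $r$. For self-adjoint $v$ one has $2L_v(X) = vX + Xv = (\ell_v + r_v)(X)$, and since $v = v^{\dagger}$ this equals $\ell_v + r_{v^{\dagger}}$; for skew-adjoint $Z$ one has $\ad Z(X) = ZX - XZ = (\ell_Z - r_Z)(X)$, and since $Z^{\dagger} = -Z$ this equals $\ell_Z + r_{Z^{\dagger}}$. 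Thus both generating families already have the prescribed shape.

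For the inclusion $\{\ell_T + r_{T^{\dagger}} : T \in \bh\} \subseteq \str$, given $T$ I would split $T = a + b$ into its self-adjoint part $a = \tfrac{1}{2}(T + T^{\dagger})$ and its skew-adjoint part $b = \tfrac{1}{2}(T - T^{\dagger})$. Using linearity of $\ell$ and $r$ in the subscript together with $T^{\dagger} = a - b$, one computes $\ell_T + r_{T^{\dagger}} = (\ell_a + r_a) + (\ell_b - r_b) = 2L_a + \ad b$, which lies in $\mathbb{L} \oplus \der = \str$. For the reverse inclusion, any $H \in \str$ can be written $H = L_v + \ad Z$ with $v$ self-adjoint and $Z$ skew-adjoint; setting $T = \tfrac{v}{2} + Z$ gives $T^{\dagger} = \tfrac{v}{2} - Z$ and hence $H = \tfrac{1}{2}(\ell_v + r_v) + (\ell_Z - r_Z) = \ell_T + r_{T^{\dagger}}$. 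These two computations give the claimed equality.

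I expect no serious obstacle here: the genuine work was already done in identifying $\der$ (the nontrivial Remark relying on the lift of Theorem \ref{levantada}) and in the general splitting $\str = \mathbb{L} \oplus \der$, so the corollary reduces to bookkeeping with the self-adjoint/skew-adjoint decomposition of $\bh$. As an independent consistency check I would also verify directly that $H = \ell_T + r_{T^{\dagger}}$ satisfies the defining equation (\ref{strbh}): a short computation gives $H(XAX) - XAH(X) - H(X)AX = -X(AT + T^{\dagger}A)X$, so (\ref{strbh}) holds with $\overline{H}(A) = -(T^{\dagger}A + AT)$, and since $H(1) = T + T^{\dagger}$ one checks this matches $\overline{H} = H - 2\U_{H1,1}$, confirming the parametrization is the correct one.
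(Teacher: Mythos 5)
Your proposal is correct and follows essentially the same route as the paper: both arguments rest on the splitting $\str=\mathbb L\oplus\der$ together with the identification $\der=\{\ad Z: Z^{\dagger}=-Z\}$, and your reverse inclusion (setting $T=\tfrac{v}{2}+Z$) is exactly the paper's computation. The only cosmetic difference is that the paper establishes the inclusion $\{\ell_T+r_{T^{\dagger}}\}\subseteq\str$ by directly verifying equation (\ref{strbh}) with $\overline{H}=-(\ell_{T^{\dagger}}+r_T)$, which is precisely the verification you carry out as your ``consistency check,'' so the two proofs coincide in substance.
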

\begin{proof}
Each of the morphisms $H=\ell_T+r_{T^{\dagger}}$ is linear continuous and preserves $\V$, since $TA+AT^{\dagger}$ is self-adjoint for self-adjoint $A$. It is easy to check that if we take $\overline{H} =-(\ell_{T^{\dagger}}+r_T)$ then (\ref{strbh}) is verified, thus $H\in \str$.  Now recall that   $\str=\mathbb L\oplus \der$, thus for  $g\in \str$ we have by the previous remark that $g=L_X+\ad Z$ for some $X\in \V$ and $Z\in \bh$ with  $Z^{\dagger}=-Z$. But then calling $Y=X/2\in \V$, we see that 
$$
g(A)=1/2(XA+AX)+ZA-AZ= (\ell_Y+r_Y+\ell_Z-r_Z)(A)=(\ell_{Y+Z}+ r_{Y^{\dagger}+Z^{\dagger}})(A)
$$
Since $T=Y+Z$ is a generic element of $\bh$, the proof is finished.
\end{proof}

\begin{rem}In particular we obtain  $-Id\in \str$ taking  $T=-1/2$; notice that since $\ell,r$ commute, then
$$
\exp(\ell_T+r_{T^{\dagger}})=e^{\ell_T}e^{r_{T^{\dagger}}}=\ell_{e^T}r_{e^{T^{\dagger}}}, 
$$
that is $e^{\ell_T+r_{T^{\dagger}}}A=e^TAe^{T^{\dagger}}=fAf^{\dagger}$ with linear $f$ (this is apparent because everything happens inside  $\Str_0\subset \GO$ if we exponentiate  $\str$).
\end{rem}

\medskip

From Theorem \ref{stru} of the previous section, and because $p=0,1$ are the only central projections of $\bh$, we conclude this paper by noting that

\begin{teo}If $\V=\bh_{sa}$ as a JB-algebra, then $\Str=\GO\sqcup -\GO$.
\end{teo}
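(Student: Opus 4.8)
The plan is to reduce everything to the classification already obtained in Corollary~\ref{GOcopiesStr}, so that the only genuinely new input is a computation of the central projections of $\V=\bh_{sa}$. Recall that Corollary~\ref{GOcopiesStr} tells us the copies of $\GO$ inside $\Str$ are exactly the cosets $S_p\GO$ as $p$ ranges over the central projections of $\V$, and that distinct central projections yield distinct (hence disjoint) copies. Thus $\Str=\bigsqcup_p S_p\GO$, and it will suffice to enumerate the central projections of $\bh_{sa}$ and to identify the associated symmetries $S_p=L_{\varepsilon_p}$.

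The key step is to show that $0$ and $1$ are the only central projections of $\bh_{sa}$. I would argue as follows: let $p=p^2\in\V$ be a central projection. By Lemma~\ref{centrals} we have $L_p=\U_p$, which in the associative algebra $\bh$ reads $\tfrac12(pX+Xp)=pXp$ for every self-adjoint $X$, i.e. $pX+Xp=2pXp$. Multiplying this identity on the left and then on the right by $p$ (and using $p^2=p$) gives $pX=pXp=Xp$ for all self-adjoint $X$; since every element of $\bh$ is a combination of self-adjoint ones, $p$ lies in the center of $\bh$. As $\h$ makes $\bh$ a factor, its center is $\mathbb C\,1$, so the only self-adjoint idempotents there are $p=0$ and $p=1$.

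It then remains to compute the corresponding symmetries. For $p=1$ we have $\varepsilon_1=1$, so $S_1=L_{\varepsilon_1}=Id$ and $S_1\GO=\GO$; for $p=0$ we have $\varepsilon_0=-1$, so $S_0=L_{\varepsilon_0}=-Id$ and $S_0\GO=-\GO$. These two copies are disjoint: indeed $-Id\notin\GO$, since $-Id$ carries $\Omega$ onto $-\Omega$ while $\Omega\cap(-\Omega)=\emptyset$ because the positive cone is proper. Combining this with the enumeration above yields $\Str=\GO\sqcup-\GO$.

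The heart of the argument is the second paragraph, namely confirming that $\bh_{sa}$ admits no nontrivial central projections; this is precisely where the factoriality of $\bh$ (equivalently, the triviality of its center) is used, and the translation of the Jordan identity $L_p=\U_p$ of Lemma~\ref{centrals} into the associative relation $pX=Xp$ is the only point requiring genuine care. Everything else is bookkeeping already packaged in Corollary~\ref{GOcopiesStr} and the structure theorem~\ref{stru}.
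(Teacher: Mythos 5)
Your proposal is correct and follows essentially the same route as the paper: the paper likewise deduces the theorem in one line from Theorem \ref{stru} (equivalently Corollary \ref{GOcopiesStr}) together with the fact that $0$ and $1$ are the only central projections of $\bh_{sa}$. The only difference is that the paper asserts this last fact without proof, whereas you supply a correct verification of it, translating $L_p=\U_p$ from Lemma \ref{centrals} into $pX=Xp$ for all $X\in\bh_{sa}$ and invoking the triviality of the center of $\bh$.
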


The characterization of the structure group of a product of copies of $\bh_{sa}$ is also at hand. Apparently also, the real part of a connected $C^*$-algebras, when viewed as a JB-algebra, has a  two-components structure group, as the same Theorem \ref{stru} shows. On the other end of the zoo, a $C^*$-algebra with infinitely many central projections shows us that $\Str$ can have infinitely many components.

\section*{Acknowledgments} 

This research was supported by Universidad de Buenos Aires, Agencia Nacional de Promoci\'on de Ciencia y Tecnolog\'\i a (ANPCyT-Argentina) and Consejo Nacional de Investigaciones Cient\'\i ficas y T\'ecnicas (CONICET-Argentina). 
This research was supported by Universidad de Buenos Aires, Agencia Nacional de Promoci\'on de Ciencia y Tecnolog\'\i a (ANPCyT-Argentina) and Consejo Nacional de Investigaciones Cient\'\i ficas y T\'ecnicas (CONICET-Argentina). This line of research on Jordan Banach algebras was encouraged by the talks and informal discussions held by G. Larotonda with Cho-Ho Chu, Bas Lemmens, Jimmy Lawson, Yongdo Lim, Karl-Hermann Neeb and Harald Upmeier among others, during two workshops on Jordan Algebras and Convex Cones (Leiden 2017 and Jeju 2019).

\end{document}